\numberwithin{equation}{section}
\newtheorem{lemma}[equation]{Lemma}
\newtheorem{prop}[equation]{Proposition}
\newtheorem{cor}[equation]{Corollary}
\newtheorem{thm}[equation]{Theorem}
\newtheorem{notation}[equation]{Notation}
\newtheorem{question}[equation]{Question}
\theoremstyle{definition}
\newtheorem{defn}[equation]{Definition}
\newtheorem{setup}[equation]{Setup}
\newtheorem{example}[equation]{Example}
\theoremstyle{remark}
\newtheorem{remark}[equation]{Remark}
\newtheorem{remarks}[equation]{Remarks}
\newtheorem*{claim*}{Claim}
\newtheorem*{case*}{Case}
\newcommand{\cC}{\mathcal{C}}
\newcommand{\m}{\mathfrak m}
\newcommand{\PP}{\mathbb P}
\renewcommand{\P}{\PP}
\newcommand{\ZZ}{\mathbb Z}
\newcommand{\im}{\operatorname{im}}
\newcommand{\id}{\operatorname{id}}
\newcommand{\Pic}{\operatorname{Pic}}
\newcommand{\Tor}{\operatorname{Tor}}
\newcommand{\Hom}{\operatorname{Hom}} 
\newcommand{\cO}{{\mathcal O}}
\newcommand{\kk}{{\bf k}}
\newcommand{\Sym}{\operatorname{Sym}} 
\newcommand{\CC}{\mathbb{C}}
\newcommand{\defi}[1]{\textsf{#1}} 
\newcommand{\beq}{\begin{displaymath}}
\newcommand{\eeq}{\end{displaymath}}
\def\nc{\newcommand}
\def\on{\operatorname}
\nc{\Q}{\mathbb{Q}}
\nc{\RR}{\mathbf{R}}
\nc{\LL}{\mathbf{L}}
\nc{\xra}{\xrightarrow}
\nc{\xla}{\xleftarrow}
\def\a{\alpha}
\def\om{\omega}
\def\DM{\operatorname{DM}}
\def\th{\on{th}}
\nc{\into}{\hookrightarrow}
\nc{\onto}{\twoheadrightarrow}
\nc{\OO}{\mathcal{O}}
\nc{\Z}{\mathbb{Z}}
\nc{\cA}{\mathcal{A}}
\nc{\w}{\widehat}
\nc{\End}{\on{End}}
\nc{\res}{\frac{1}{x_0x_1}}
\nc{\tF}{\widetilde{F}}
\nc{\tG}{\widetilde{G}}
\nc{\tf}{\widetilde{f}}
\nc{\Com}{\on{Com}}
\nc{\G}{\mathbb{G}}
\nc{\cG}{\mathcal{G}}
\nc{\cE}{\mathcal E}
\nc{\cF}{\mathcal F}
\nc{\cR}{\mathcal R}
\nc{\cD}{\mathcal D}
\nc{\cB}{\mathcal B}
\nc{\cT}{\mathcal T}
\nc{\cL}{\mathcal L}
\nc{\bM}{\mathbf M}
\nc{\bN}{\mathbf N}
\nc{\U}{\mathbf U}
\nc{\BM}{\mathbf B \mathbf M}
\nc{\Dsg}{\on{D}_{\on{sg}}}
\nc{\fC}{\mathcal{C}}
\nc{\fG}{\mathcal{G}}
\nc{\N}{\mathbb{N}}
\nc{\del}{\partial}
\nc{\cone}{\on{cone}}
\nc{\D}{\on{D}_{\on{diff}}}
\nc{\DMb}{\on{D}^b_{\DM}}
\nc{\Db}{\on{D}^{\on{b}}}
\nc{\Kb}{\on{K}^{\on{b}}}
\nc{\fm}{\mathfrak{m}}
\nc{\Flag}{\on{Flag}}
\nc{\DMmin}{\DM_{\on{min}}}
\nc{\Ddiff}{\on{D}_{\on{diff}}}
\nc{\Dbdiff}{\on{D}^\on{b}_{\on{diff}}}
\nc{\wO}{\widehat{\OO}}
\nc{\wT}{\widehat{T}}
\nc{\from}{\leftarrow}
\nc{\wLL}{\widetilde{\LL}}
\nc{\augCech}{\widetilde{\cC}}
\nc{\Fold}{\on{Fold}}
\nc{\Ext}{\on{Ext}}
\nc{\FF}{\mathbf{F}}
\nc{\Comper}{\Com_{\on{per}}}
\nc{\Unfold}{\on{Unfold}}
\nc{\intHom}{\underline{\Hom}}
\nc{\Ex}{\on{Ex}}
\nc{\tg}{\widetilde{g}}
\def\b{\beta}
\nc{\B}{\mathcal{B}}
\nc{\K}{\mathcal{K}}
\nc{\kos}{\on{Kos}}
\nc{\Perf}{\on{Perf}}
\nc{\tR}{\widetilde{\cR}}
\nc{\X}{\mathcal{X}}
\nc{\Cl}{\on{Cl}}
\nc{\fU}{\mathcal{U}}
\nc{\bU}{\mathbf U}
\nc{\st}{\on{st}}
\nc{\coh}{\on{coh}}
\def\D{\mathcal{D}}
\nc{\tU}{\U}
\nc{\bC}{\mathbf{C}}
\nc{\aux}{\on{aux}}
\def\d{\mathbf{d}}
\def\phi{\varphi}
\title{Linear syzygies of curves in weighted projective space}
\author{Michael K. Brown}
\address{Department of Mathematics, Auburn University, Auburn, AL}
\email{mkb0096@auburn.edu}
\author{Daniel Erman}
\address{Department of Mathematics, University of Wisconsin, Madison, WI}
\email{derman@math.wisc.edu}
\date{\today}
\thanks{The first author was supported by NSF-RTG grant 1502553.  The second author was supported by NSF grant 
DMS-2200469}
\begin{document}

\maketitle

\begin{abstract}
We develop analogues of Green's $N_p$-conditions for subvarieties of weighted projective space, and we prove that such $N_p$-conditions are satisfied for high degree embeddings of curves in weighted projective space. A key technical result links positivity with low degree (virtual) syzygies in wide generality, including cases where normal generation fails.
\end{abstract}

\section{Introduction}

One of the foundational results connecting syzygies and geometry is Mark Green's theorem on linear syzygies of smooth curves:

\begin{thm}[\cite{green2}]
\label{thm:green}
Let $C$ be a smooth curve of genus $g$ embedded in $\PP^n$ via a complete linear series $|L|$ and $F$ the minimal free resolution of the homogeneous coordinate ring of $C$. If $\deg(L) \ge 2g + 1 + p$ for some $p \ge 0$, then the embedding $C \into \PP^n$ satisfies the $N_p$-condition: that is, it is normally generated, and $F_i$ is generated in degree $i+1$ for $1 \le i \le p$. 
\end{thm}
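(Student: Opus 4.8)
The plan is to reduce the $N_p$-condition to a cohomological vanishing statement via the Koszul cohomology approach, which is the standard framework in this setting. The $N_p$-condition is governed by the graded Betti numbers $\beta_{i,j}(C)$ of the homogeneous coordinate ring $S_C$, and specifically by whether $\beta_{i,j} = 0$ for all $j \ge i+2$ and $1 \le i \le p$ (together with normal generation, i.e. $\beta_{0,j}=0$ for $j\ge 1$ and projective normality). By Green's identification of graded Betti numbers with Koszul cohomology, we have $\beta_{i,i+q}(C) = \dim \mathcal{K}_{i,q}(C,L)$, where the Koszul cohomology group $\mathcal{K}_{i,q}$ is the cohomology of the complex

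\[
\wedge^{i+1} H^0(L) \otimes H^0((q-1)L) \to \wedge^{i} H^0(L) \otimes H^0(qL) \to \wedge^{i-1} H^0(L) \otimes H^0((q+1)L).
\]

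So the entire problem becomes: show $\mathcal{K}_{i,q}(C,L) = 0$ for all $q \ge 2$ and $1 \le i \le p$, plus the case ensuring projective normality.

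**The key step** is a duality-and-vanishing argument. First I would establish projective normality, i.e. surjectivity of $H^0(L)\otimes H^0(mL)\to H^0((m+1)L)$ for all $m\ge 1$; under the hypothesis $\deg L \ge 2g+1$, this follows from Castelnuovo–Mumford regularity, since $L$ being sufficiently positive forces $H^1((m-1)L)=0$ and the multiplication maps to be surjective by a base-point-free-pencil or $H^1$-vanishing argument. For the higher linear syzygies, the cleanest route is Green's duality theorem for Koszul cohomology, which relates $\mathcal{K}_{i,q}(C,L)$ to $\mathcal{K}_{r-i, 2-q}(C, K_C; L)^\vee$ where $r = \dim H^0(L)-1$ and $K_C$ is the canonical bundle (this uses that $C$ is a curve, so only $H^0$ and $H^1$ appear). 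The dual groups that could obstruct the $N_p$-property for $q\ge 2$ get transported to groups involving $H^0$ of bundles of the form $K_C - (\text{positive multiples of } L)$, which vanish once $\deg L$ is large relative to $\deg K_C = 2g-2$.

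**The main obstacle**, and the crux of the degree bound $2g+1+p$, is controlling the groups $\mathcal{K}_{i,1}(C,L)$ — the genuinely "linear strand" syzygies — rather than the higher $q$. For these I expect to invoke a positivity criterion: the vanishing $\mathcal{K}_{i,1}=0$ for $i \le p$ is equivalent to a statement about the syzygies of the section ring in the lowest nontrivial strand, and the sharp input is that $L - K_C$ is sufficiently positive. Concretely, writing $L = K_C + B$ with $\deg B \ge p+3$ (which is what $\deg L \ge 2g+1+p$ gives), one applies a vanishing theorem — either Green's own inductive argument stripping off points/sections, or the $H^1$-vanishing $H^1(C, \wedge^{i}M_L \otimes L)=0$ where $M_L$ is the kernel bundle of the evaluation $H^0(L)\otimes\OO_C\to L$. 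The heart of the matter is bounding the cohomology of these exterior powers $\wedge^i M_L$, and the inductive step that trades $i$ for $i-1$ against one unit of positivity is exactly where the "$+p$" in the hypothesis is consumed. I would carry out this induction on $p$, with the base case $p=0$ being projective normality established above.
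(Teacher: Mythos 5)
Your overall framework---Koszul cohomology, Green's duality, and kernel-bundle vanishing---is a legitimate route to Theorem~\ref{thm:green}: it is essentially the $M_L$-bundle approach of \cite{GL}, which this paper explicitly flags (in the footnote to its outline) as an alternative to the route it actually follows. But your reduction misidentifies which groups must vanish, and this breaks the proof at its center. You correctly state at the outset that $N_p$ amounts to $\mathcal{K}_{i,q}(C,L)=0$ for all $q\ge 2$ and $1\le i\le p$. The first error is the claim that duality disposes of all $q\ge 2$ for degree reasons: duality only kills $q\ge 3$ trivially. For $q=2$ the dual group is $\mathcal{K}_{r-1-i,0}(C,K_C;L)$ (where $r=\dim H^0(L)-1$), which is built from $H^0(K_C)$ and $H^0(K_C\otimes L)$; these do not vanish once $g\ge 1$, \emph{no matter how large} $\deg L$ is, since enlarging $L$ does nothing to $H^0(K_C)$. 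Killing this $q=2$ group is precisely the content of the theorem, not a formal consequence of duality---if it were formal, every normally generated curve would satisfy $N_p$ for all $p$, contradicting the known near-sharpness of the bound $2g+1+p$. The second error compounds the first: you then declare the crux to be the vanishing $\mathcal{K}_{i,1}(C,L)=0$ for $i\le p$. The $N_p$-condition does not ask for this, and it is false: $\mathcal{K}_{i,1}$ computes the \emph{linear} syzygies $\beta_{i,i+1}$, which are exactly the syzygies $N_p$ permits and which are abundantly nonzero for any curve of large degree (for instance $\mathcal{K}_{1,1}$ is the space of quadrics containing $C$). As written, your argument skips the group that must be killed and tries to kill a group that must not vanish.

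The repair is to aim your stated tools at $q=2$ (and $q\ge 2$) rather than $q=1$: the vanishing $H^1(C,\wedge^{i+1}M_L\otimes L^{q})=0$ for $q\ge 1$ and $0\le i\le p$ implies $\mathcal{K}_{i,q+1}(C,L)=0$, via the cohomology sequence of $0\to\wedge^{i+1}M_L\otimes L^{q}\to \wedge^{i+1}H^0(L)\otimes L^{q}\to \wedge^{i}M_L\otimes L^{q+1}\to 0$, and the filtration-based induction that trades one exterior power of $M_L$ for one unit of degree is indeed where the hypothesis $\deg L\ge 2g+1+p$ is consumed; with that rerouting your outline becomes the standard Green--Lazarsfeld proof. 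It is worth noting that the paper takes the other road: its Theorem~\ref{thm:main}, which recovers Theorem~\ref{thm:green} when $D=0$, is proved by dualizing to the canonical module $\om_R$ and bounding the length of its (strongly) linear strand by $g-1$ using the Linear Syzygy Theorem, following \cite[\S 8]{eisenbud}. That duality-plus-linear-strand argument is the one the authors found to survive passage to the weighted setting, whereas the $M_L$-bundle route is complicated there by the fact that the weighted series is generated in distinct degrees.
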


Let us recall the definitions of the terms in the theorem.  The embedding $C \into \PP^n$ is defined to be \defi{normally generated} (or \defi{projectively normal}) if the section ring $\bigoplus_{i \ge 0} H^0(C, L^i)$ is generated in degree 1. Theorem~\ref{thm:green} gives a common generalization, in the language of syzygies, of two classical results showing that the algebraic presentation of a curve exhibits more rigid structure as its degree grows. Specifically: the $p = 0$ case of Theorem~\ref{thm:green} is Castelnuovo's Theorem, which says that $C\subseteq \PP^r$ is normally generated if $\deg(L)\geq 2g+1$~\cite{castelnuovo}; while the $p=1$ case is a result of Fujita and Saint-Donat stating that $C$ is cut out by quadrics whenever $\deg(L) \geq 2g+2$ \cite{fujita,SD}.

Theorem~\ref{thm:green} helped launch the modern study of the geometry of syzygies.
It led to numerous generalizations involving embeddings of surfaces~\cite{AKL, banagere-hanumanthu,gp3, gallego-purna2,KL, chung-yeung}, smooth higher dimensional varieties~\cite{eghp,ein-lazarsfeld-np,gallego-purna,hss, hwang-to}, abelian varieties~\cite{chintapalli,pareschi,ppI,ppII}, and Calabi-Yau varieties~\cite{niu}; see also~\cite{dao-eisenbud, ein-niu-park}. 
Theorem~\ref{thm:green}  also led to Green's Conjecture, which proposes a relationship between the Clifford index of a non-hyperelliptic curve over $\CC$ and the linearity of the free resolution of its coordinate ring with respect to its canonical embedding \cite{greenII}. This conjecture remains open in general and is a highly active area of research, see e.g. \cite{AFPRW, AV,  voisin-even, voisin-odd}. For an introduction to the wide circle of ideas on syzygies of curves, see Aprodu-Nagel's book~\cite{aprodu-nagel}, Ein-Lazarsfeld's survey~\cite{EL-bulletin}, Eisenbud's book \cite{eisenbud}, and more~\cite{EL-large,farkas}.

The past two decades have seen a flurry of activity devoted to generalizing 
work on syzygies to the \emph{non}standard graded setting. For instance, Benson~\cite{benson} generalized Eisenbud-Goto's Theorem on Castelnuovo-Mumford regularity and linear free resolutions~\cite[Theorem 1.2(1)]{EG} to nonstandard $\Z$-graded polynomial rings, leading to breakthroughs in invariant theory due to Symonds \cite{symonds2, symonds}.
Benson's resulting ``weighted" analogue of Castelnuovo-Mumford regularity was generalized by Maclagan-Smith to multigraded polynomial rings in \cite{MS}, with a view toward applications in toric geometry. This in turn led to much followup work on multigraded regularity \cite{BC, BHS, BHS2, chardin-holanda, chardin-nemati}, as well as a wide-ranging program on multigraded syzygies: \cite{tate, BES, bkly, brownshort, buse-chardin, EES, HNV, HS, hss, SV, yang}.

The present work is the first generalization of Green's Theorem that allows the \emph{target} of the embedding to be a variety other than projective space, connecting with the literature on nonstandard gradings  discussed above, and raising many new questions.
For instance, it is far from obvious how to even state an analogue of Theorem~\ref{thm:green} for curves embedded in weighted projective space.
To define weighted $N_p$-conditions for $p \ge 1$, one must ask: what does it mean for a complex of free modules over a nonstandard-$\Z$-graded polynomial ring to be \emph{linear}? To illustrate the subtlety in this question, take a standard graded polynomial ring $S$ and an $S$-module $M$ generated in degree 0.
The minimal free resolution $F$ of $M$ is linear if and only if it satisfies the following three equivalent conditions:
\begin{enumerate}
\item The differentials in $F$ are matrices of linear forms. 
\item The Betti table of $F$ has exactly one row.
\item The degrees of the syzygies 
grow no faster than those of the Koszul complex.
\end{enumerate}

In the nonstandard $\Z$-graded case, each of these yields a {\em distinct} analogue of a linear resolution.  And there is no obvious ``best'' choice, as each measures something meaningful:
(1) leads to strong linearity (Definition~\ref{def:strongly linear}) and the BGG correspondence as in \cite{linear}, (2) to weighted regularity and local cohomology as in \cite{benson}, and (3) to {Koszul linearity} (Definition~\ref{defn:koszul}) and connections with Koszul cohomology; see \S\ref{sec:koszul linearity} for details on these notions and how they are related.
In fact, a central obstacle in our work is the technical challenge of interpolating between these non-equivalent notions of linearity, a challenge which simply is not present in the classical setting.

Phrasing an analogue of Theorem~\ref{thm:green} also requires weighted versions of complete linear series and of normal generation. 
A weighted notion of normal generation is fairly straightforward---see Definition~\ref{defn:proj normal}. But defining a weighted version of complete linear series turns out to be rather subtle,
and as with linearity, there are multiple potential analogues, depending on which aspect of the classical notion one considers. 
The subtlety arises partly because there is a tension between nondegeneracy and normal generation (see Examples~
\ref{ex:naive} and ~\ref{ex:not normally generated}), and partly because any analogue must depend on data beyond just the line bundle.
We propose \defi{log complete series} in Definition~\ref{defn:log complete} as an analogue of complete linear series that requires a minimal amount of extra data: a base locus and a degree.  These lead to a rich family of examples of embeddings, where the underlying weighted spaces are fairly simple, involving just two distinct degrees.

For our weighted $N_p$-conditions, we use condition (3) from above:

\begin{defn}\label{defn:1regular}
Let $S$ be the $\ZZ$-graded polynomial ring corresponding to a weighted projective space $\PP(W)$.  
Write $w^i$ for the maximal degree of an $i^{\th}$ syzygy of the residue field. 
Let $Z \subseteq \PP(W)$ be a variety and $F=\left[F_0\gets F_1 \gets \cdots\right] $ the minimal $S$-free resolution of its coordinate ring. We say $Z\subseteq \mathbb  \PP(W)$ satisfies the \defi{weighted $N_p$-condition}  if it is normally generated (Definition~\ref{defn:proj normal}),
and $F_i$ is generated in degree $\leq w^{i+1}$ for all $i=1,2, \dots, p$ (i.e. the complex $[F_0 \from \cdots \from F_p]$ is Koszul 1-linear, in the sense of Definition~\ref{defn:koszul}).
\end{defn}

In the standard grading, we have $w^{i+1} = i+1$, so our definition extends Green's. E.g., if the variables of $S=k[x_1,x_2,x_3]$ have degrees $1,2$ and $5$; then $w^1=5, w^2=7$, and $w^3=8$.

\medskip

We now turn to our main results.  We establish the following standing hypotheses:

\begin{setup}\label{setup:standard}
Let $C$ be a smooth curve of genus $g$, $L$ a line bundle on $C$, $D$ an effective divisor on $C$, and $d\geq 2$.  Assume $\deg(L \otimes \OO(-D)) \ge 2g + 1$. Let $W$ be the log complete series of type $(D,d)$ for $L$ (see Definition~\ref{defn:log complete}), $S = k[x_0, \dots, x_n]$ the (nonstandard $\Z$-graded) coordinate ring of $\PP(W)$, and $I_C \subseteq S$ the defining ideal of the induced embedding $C\subseteq \PP(W)$. 
\end{setup}

Our first result is a weighted generalization of Castelnuovo's Theorem, i.e. the $p=0$ case of Green's Theorem; see also~\cite{GL85, mattuck, mumford-quadrics}: 
\begin{thm}\label{thm:proj norm}
Under Setup~\ref{setup:standard}, the log complete series $W$ is normally generated.
\end{thm}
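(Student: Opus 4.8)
The plan is to reduce the statement to the surjectivity of a handful of multiplication-of-sections maps on $C$, each of which follows from classical positivity on curves. Write $L' = L\otimes \OO(-D)$, so that $\deg L' \ge 2g+1$ by hypothesis and, since $D$ is effective, $\deg L\ge 2g+1$ as well. Unwinding Definition~\ref{defn:log complete}, the weight-$1$ generators of $S$ are (a basis of) $H^0(C,L')\subseteq H^0(C,L)$ — the sections of $L$ vanishing along $D$ — while the weight-$d$ generators span $H^0(C,L^d)$. A monomial of weighted degree $m=qd+r$ with $0\le r<d$ uses at least $r$ weight-$1$ factors, so its image in $H^0(C,L^m)$ vanishes to order at least $r$ along $D$, whereas the weight-$d$ generators are unconstrained; one thus finds that the section ring of the embedded curve is
\[
R=\bigoplus_{m\ge 0} H^0\!\big(C,\,L^m\otimes\OO(-rD)\big)=\bigoplus_{q\ge 0,\ 0\le r<d} H^0\!\big(C,\,(L^d)^{\otimes q}\otimes (L')^{\otimes r}\big),\qquad r=m-d\lfloor m/d\rfloor .
\]
Normal generation (Definition~\ref{defn:proj normal}) is exactly the assertion that $S\to R$ is surjective, i.e.\ that each graded piece $R_m$ is spanned by products of the weight-$1$ and weight-$d$ generators.

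I would establish this surjectivity in two stages, organized by the block structure $m=qd+r$. First, the weight-$d$ generators alone give the composite $\Sym^\bullet H^0(L^d)\to \bigoplus_{q\ge 0} H^0\!\big((L^d)^{\otimes q}\big)$, which is surjective because $L^d$ has degree $\ge 2g+1$ and is therefore normally generated by Castelnuovo's theorem \cite{castelnuovo}; this identifies the ``corner'' pieces $R_{qd}=H^0\!\big((L^d)^{\otimes q}\big)$ for every $q$. Second, I would fill in each block by multiplying the corner $R_{qd}$ by the weight-$1$ generators $H^0(L')$: assuming $R_{qd+r}=H^0\!\big((L^d)^{\otimes q}\otimes (L')^{\otimes r}\big)$, the map $R_{qd+r}\otimes H^0(L')\to H^0\!\big((L^d)^{\otimes q}\otimes (L')^{\otimes(r+1)}\big)=R_{qd+r+1}$ is surjective by the standard multiplication lemma — $H^0(A)\otimes H^0(B)\twoheadrightarrow H^0(A\otimes B)$ whenever $\deg A\ge 2g+1$ and $\deg B\ge 2g$ (see \cite{mumford-quadrics,GL85}) — applied with $A=L'$ and $B=(L^d)^{\otimes q}\otimes (L')^{\otimes r}$, whose degree is $\ge 2g$ in every case with $(q,r)\ne (0,0)$. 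Induction on $r$ within each fixed block then yields $R_m=H^0\!\big((L^d)^{\otimes q}\otimes (L')^{\otimes r}\big)$ for all $m=qd+r$, i.e.\ $S\twoheadrightarrow R$.

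The main obstacle, and the feature that distinguishes this from the classical Castelnuovo argument, is the passage across a multiple of $d$ and the reason it forces the weight-$d$ generators into the picture. Because every weight-$1$ generator vanishes on $D$, the subalgebra they generate produces, in weighted degree $(q+1)d$, only the sections of $(L^d)^{\otimes(q+1)}$ vanishing along $dD$; the ``extra'' sections that are nonzero on $D$ cannot be obtained from weight-$1$ elements at all, and this is precisely where the tension with naive nondegeneracy (cf.\ Examples~\ref{ex:naive} and \ref{ex:not normally generated}) would otherwise manifest. The log-complete construction is engineered so that exactly these missing sections are supplied, completely, by $H^0(L^d)$ in weighted degree $d$. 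Consequently the crux of the write-up is twofold: to verify rigorously that the section ring really is the twisted ring $R$ above — so that the targets of the multiplication maps are the correct twisted bundles $(L^d)^{\otimes q}\otimes (L')^{\otimes r}$ — and to check that each bundle appearing has degree $\ge 2g$ (respectively $\ge 2g+1$) so that the two classical inputs apply uniformly. Once the block structure and these degree bounds are in place, no positivity beyond $\deg(L\otimes\OO(-D))\ge 2g+1$ is needed.
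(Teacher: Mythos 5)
Your multiplication-of-sections computation is essentially sound, and it recovers a genuine ingredient of the paper's proof: the identification $(S/I_C)_m \cong H^0\big(C, L^m\otimes\OO(-rD)\big)$, with $r = m - d\lfloor m/d\rfloor$, is exactly the content of Lemmas~\ref{lem:multiple} and~\ref{lem:multiplicity}(2), which the paper proves with the same two classical inputs you invoke (Castelnuovo's theorem for the ``corner'' degrees $m \in d\Z$, and surjectivity of multiplication maps to fill in each block). The gap is in your very first reduction. Normal generation in this paper is \emph{not} defined as surjectivity of $S$ onto your twisted ring $N \coloneqq \bigoplus_m H^0\big(C, L^m\otimes\OO(-rD)\big)$: Definition~\ref{defn:proj normal} says $W$ is normally generated when $H^1_{\mathfrak m}(S/I_C) = 0$, equivalently (Remarks~\ref{rem:proj normal}(2)) when the maps $S_m \to H^0(\widetilde{T(m)})$ are surjective, where $T = S/I_C$. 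Your argument shows $T \cong N$ as graded $S$-modules, but calling $N$ ``the section ring of the embedded curve'' presupposes that the sheaves $\widetilde{T(m)}$ on $\PP(\d)$ restrict to the line bundles $L^m \otimes \OO(-rD)$ on $C$ and that their global sections are computed by $N_m$ --- i.e., that $N$ is $\m$-saturated. In the standard graded setting this is automatic, which is why Castelnuovo-type arguments can stop where yours does; in the weighted setting it is precisely where the difficulty lies, since sheafification does not commute with twisting (Remark~\ref{rem:path1}(3)), $\OO_{\PP(\d)}(m)$ need not be a line bundle, and normal generation is not even equivalent to $S/I_C$ being integrally closed (see the Remark following Remarks~\ref{rem:proj normal}). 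Knowing the graded pieces of $T$ gives its Hilbert function and multiplication, but by itself it bounds neither its depth nor its local cohomology.

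This is exactly the gap that the second half of the paper's proof closes, and your work can be completed by the same route: compare $T$ with the \emph{full} section ring $R = \bigoplus_m H^0(C, L^m)$ rather than with $N$. Setting $Q = R/T$, the paper proves that $\depth_S R \ge 2$ (Lemma~\ref{lem:technical}(1)) and that $H^0_{\mathfrak m}(Q) = 0$ (Proposition~\ref{prop:conductor}(4): since $S_d \onto (S/I_C)_d \cong H^0(C,L^d) \onto H^0(D,\OO_D)$, there is an element $u \in S_d$ restricting to a unit on $D$, and multiplication by $u$ is injective on $Q$, so no nonzero element of $Q$ is annihilated by a power of $\m$); the long exact sequence in local cohomology for $0 \to T \to R \to Q \to 0$ then gives $H^1_{\mathfrak m}(T) = 0$. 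Your block decomposition supplies exactly the description of $Q$ needed for that second step, so the missing piece is not large --- but as written, the assertion that graded-piece surjectivity ``is exactly'' Definition~\ref{defn:proj normal} is unjustified, and it is the weighted-specific heart of the theorem.
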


The key to Theorem~\ref{thm:proj norm} is using a suitable generalization of the notion of a complete linear series (Definition~\ref{defn:log complete}), as many embeddings of curves into weighted projective spaces simply fail to enjoy any reasonable analogue of normal generation; see Example~\ref{ex:not normally generated}. 

The following generalization of Green's Theorem (Theorem~\ref{thm:green}) is our main result:

\begin{thm}\label{thm:main}
With Setup~\ref{setup:standard}: if $\deg(L \otimes \OO(-D))  \geq 2g+1+q$ for $q \ge 0$, then $C\subseteq \PP(W)$ satisfies the weighted $N_{q+d \cdot \deg(D)}$ condition.
\end{thm}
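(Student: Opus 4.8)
The plan is to treat the two clauses of the weighted $N_p$-condition (Definition~\ref{defn:1regular}) separately. Normal generation holds by Theorem~\ref{thm:proj norm}, so the entire remaining content is to prove that the truncated complex $[F_0 \from \cdots \from F_p]$ with $p = q + d\cdot\deg(D)$ is Koszul $1$-linear (Definition~\ref{defn:koszul}). I would begin by making the target bound completely explicit. A direct count from Definition~\ref{defn:log complete} shows that $\PP(W)$ has $a := h^0(C, L\otimes\OO(-D))$ coordinates of degree $1$ and exactly $b := d\cdot\deg(D)$ coordinates of degree $d$, the latter realizing the $d$-jets of $L^{d}$ along $D$. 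Since $w^i$ is the sum of the $i$ largest weights, this gives $w^i = id$ for $i \le b$ and $w^i = i + (d-1)b$ for $i \ge b$. In particular, for $i \ge b$ the condition ``$F_i$ generated in degree $\le w^{i+1}$'' allows a slack of $w^{i+1} - (i+1) = (d-1)b = (d-1)\,d\,\deg(D)$ beyond strict linearity, and this slack is exactly what will be consumed by the degree-$d$ variables. I would then restate the generator-degree bound on each $F_i$ as the vanishing of $\Tor^S_i(S/I_C, k)$ in internal degrees $> w^{i+1}$.

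For the degree-$1$ variables I would run the classical kernel-bundle proof of Green's Theorem~\ref{thm:green}. Set $L' := L\otimes\OO(-D)$, so that the degree-$1$ coordinates are a basis of $H^0(C, L')$ and $\deg(L') \ge 2g+1+q$ by hypothesis. Forming the kernel bundle $M = \ker(H^0(C,L')\otimes\OO_C \onto L')$, the higher syzygies attributable to the degree-$1$ strand are computed by the groups $H^1(C, \wedge^{i+1} M \otimes {L'}^{j})$ with $j \ge 1$, and the Castelnuovo-type estimates underlying Theorem~\ref{thm:green}, together with $\deg(L') \ge 2g+1+q$, force these to vanish for all $1 \le i \le q$. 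This produces the linear strand attributable to $L'$ out to homological step $q$.

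The decisive step is to convert the $b = d\cdot\deg(D)$ degree-$d$ coordinates into $b$ additional steps of linearity. Writing $A = \Sym H^0(C, L')$ for the standard-graded subring generated by the degree-$1$ variables, so that $S = A[y_1,\dots,y_b]$ with $\deg y_j = d$, I would compute $\Tor^S(S/I_C, k)$ using the factorization $\Kos^S = \Kos^A \otimes \Kos^{\underline y}$. The resulting change-of-rings spectral sequence has an $A$-direction governed by the step-$q$ linearity of the previous paragraph and a $\underline y$-direction contributing Koszul factors: each such factor advances homological degree by one while raising internal degree by exactly $d$, i.e.\ it overshoots strict linearity by $d-1$. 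Because at most $b$ such factors can appear, the total overshoot is at most $(d-1)b$, which is precisely the slack recorded in the first paragraph; tracking this shows that the syzygy degrees stay $\le w^{i+1}$ all the way out to step $q + b = q + d\cdot\deg(D)$.

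I expect the main obstacle to be this final bookkeeping, and specifically the need to interpolate between notions of linearity that coincide in the standard grading but diverge here (as stressed in the introduction): the $\underline y$-Koszul factors naturally produce \emph{strong} linearity (differentials that are matrices of the forms $y_j$, linear with respect to the weight $d$), whereas Definition~\ref{defn:1regular} demands only Koszul $1$-linearity, the degree bound of Definition~\ref{defn:koszul}. Controlling the differentials of the comparison spectral sequence---ensuring that no higher differential or nonminimal cancellation pushes a generator of $F_i$ past its threshold $w^{i+1}$---is where the real work lies, and is the point at which the positivity input of the second paragraph and the purely combinatorial weight data of the first must be made to interact uniformly.
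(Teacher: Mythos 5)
Your reduction to Koszul $1$-linearity via Theorem~\ref{thm:proj norm}, and your weight bookkeeping, are both correct: $\PP(W)$ does have $b = d\cdot\deg(D)$ coordinates of degree $d$, so $w^i = id$ for $i \le b$ and $w^i = i + (d-1)b$ for $i \ge b$, and the available slack is $(d-1)b$. Your route is also genuinely different from the paper's, which never forms kernel bundles or a change-of-rings spectral sequence: it dualizes the conductor sequence $0 \to S/I_C \to R \to Q \to 0$ (Proposition~\ref{prop:conductor}) into $0 \to \om_R \to \om_{S/I_C} \to \om_Q \to 0$, compares strongly linear strands via Lemma~\ref{lem:strongly linear strands}, and invokes Lemma~\ref{lem:delicate} together with the Multigraded Linear Syzygy Theorem~\ref{MLST}. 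However, your proposal has two gaps that are not mere bookkeeping. First, the terms of your spectral sequence are $E_2^{p,q'} = \Tor^{k[\underline{y}]}_p\bigl(M_{q'}, k\bigr)$ where $M_{q'} = \Tor^S_{q'}(S/I_C, k[\underline{y}])$ is the Koszul homology of $S/I_C$ on the degree-one variables; but your kernel-bundle vanishing $H^1(C, \wedge^{i+1}M_{L'}\otimes L'^{\,j}) = 0$ controls the syzygies of the section ring $\bigoplus_e H^0(C, L'^{\,e})$ over $A$, which is a different module. By Lemma~\ref{lem:multiplicity}(2), the graded pieces of $S/I_C$ are $H^0(C, L^e \otimes \OO(-e'D))$ --- mixed powers of $L$ with vanishing conditions along $D$ --- so the groups you actually need are of the form $H^1(C, \wedge^{q'+1}M_{L'} \otimes L^e(-e'D))$; moreover, since the abutment runs to homological degree $q+b$, terms $E_2^{p,q'}$ with $q' $ as large as $q+b$ appear (e.g.\ $E_2^{0,q+b}$), and these lie beyond the range $q' \le q$ where positivity of $L'$ gives any vanishing at all.

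Second, and decisively, your central accounting --- ``each $\underline{y}$-Koszul factor raises internal degree by exactly $d$'' --- is valid only if each $M_{q'}$ had a \emph{linear} free resolution over $k[\underline{y}]$. In general the generators of $\Tor^{k[\underline{y}]}_p(M_{q'}, k)$ can sit as high as $\reg_{k[\underline{y}]}(M_{q'}) + pd$, and nothing in your argument bounds this regularity; a single $k[\underline{y}]$-syzygy of some $M_{q'}$ lying more than $d$ above its generators already overruns the $(d-1)b$ slack. Relatedly, you locate the ``real work'' in controlling higher differentials and nonminimal cancellation, but those can only \emph{kill} classes, since $E_\infty$ is a subquotient of $E_2$; they can never push a generator past the threshold $w^{i+1}$. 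The unproved statement is the $E_2$-bound itself. This is precisely the difficulty the paper flags in anticipating an $M_L$-bundle approach (``complicated by the fact that the weighted series $W$ is generated in distinct degrees'') and which its machinery is designed to circumvent: Lemma~\ref{lem:delicate} shows that low-degree classes lie in the strongly linear strand --- a statement that is tautological in the standard grading but requires a delicate argument here --- after which Theorem~\ref{MLST} bounds the length of that strand by $g$. Your proposal needs a substitute for exactly this step, and as written it does not contain one.
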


The theorem shows that, even for embeddings into weighted projective spaces, geometric positivity continues to find expression via low degree syzygies, and in a manner that grows uniformly with $\deg(L)$.  In other words, Green's fundamental insight from Theorem~\ref{thm:green} continues to hold for embeddings into weighted projective spaces.

In fact, as the weighted setting has several distinct notions of ``linearity'', the result even helps bring Green's result into sharper focus, clarifying that positivity is linked with linearity as defined in relation to the Koszul complex, as opposed to alternate notions of linearity, which are equivalent in the standard grading setting, but not in the weighted setting.  In somewhat rough terms, Theorem~\ref{thm:main} says that,
as $q\to \infty$, the Betti table of the (weighted) homogeneous coordinate ring of $C$ ``looks increasingly like the Koszul complex''. The ring $S$ is standard graded if and only if $D = 0$, in which case Theorem~\ref{thm:main} recovers Green's Theorem. The reader may find it helpful to look ahead to \S\ref{sec:betti and np}, where we discuss several detailed examples.
  
As an immediate consequence of Theorem~\ref{thm:main}, we obtain a generalization of the aforementioned theorem of Fujita and Saint Donat stating that a curve embedded by a complete linear series of degree $\geq 2g+2$ is cut out by quadrics. 
It is too much to hope that $C$ will be cut out by
$k$-linear combinations of products $x_ix_j$ (see Example~\ref{ex:P1 deg2}), but this intuition points towards the correct degree bound on the relations:

\begin{cor}\label{cor:quadrics}
With Setup~\ref{setup:standard}: if $\deg(L \otimes \OO(-D))\geq 2g+2$, then $I_C$ is defined by equations of degree at most $2d=\max_{i\ne j} \{ \deg(x_ix_j)\}$.
\end{cor}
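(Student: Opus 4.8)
The plan is to deduce Corollary~\ref{cor:quadrics} directly from the $q = 1$ case of Theorem~\ref{thm:main}. The hypothesis $\deg(L \otimes \OO(-D)) \geq 2g + 2$ is exactly the bound $\deg(L \otimes \OO(-D)) \geq 2g + 1 + q$ with $q = 1$, so Theorem~\ref{thm:main} applies and shows that $C \subseteq \PP(W)$ satisfies the weighted $N_{1 + d\cdot\deg(D)}$ condition. Since $d \geq 2$ and $\deg(D) \geq 0$, the weighted $N_p$-condition of Definition~\ref{defn:1regular} holds with $p = 1 + d\cdot\deg(D) \geq 1$; the content I want to extract is its $i = 1$ instance.

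Next I would translate this $i = 1$ bound into a statement about the defining ideal. Because the coordinate ring $S/I_C$ is a cyclic $S$-module, its minimal free resolution $F$ has $F_0 = S$ and the image of $F_1 \to F_0$ is exactly $I_C$, with the minimal generators of $F_1$ mapping to minimal generators of $I_C$ (via $F_1 \otimes k \cong I_C/\fm I_C$). The $i = 1$ part of the weighted $N_p$-condition asserts that $F_1$ is generated in degree $\leq w^2$, where $w^2$ denotes the maximal degree of a second syzygy of the residue field. Hence $I_C$ is generated in degrees $\leq w^2$, and it remains only to identify $w^2$.

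Finally I would compute $w^2$, which is the only place the specific weights of the log complete series enter. The minimal free resolution of $k = S/\fm$ is the Koszul complex on $x_0, \dots, x_n$, so $w^i$ equals the sum of the $i$ largest variable weights (matching the example after Definition~\ref{defn:1regular}, where $w^2 = 5 + 2 = 7$); in particular $w^2 = \max_{i \neq j}\{\deg(x_i x_j)\}$. The step requiring genuine care — the main obstacle, modest as it is — is then verifying from Definition~\ref{defn:log complete} that $\PP(W)$ carries at least two variables of the top weight $d$, so that its two largest weights sum to $2d$ rather than to $d + 1$; granting this, $w^2 = 2d$ and the corollary follows. Everything outside this weight bookkeeping is a formal unwinding of Definitions~\ref{defn:1regular} and~\ref{defn:koszul} against the $q = 1$ case of Theorem~\ref{thm:main}, and needs no positivity input beyond what the main theorem already provides.
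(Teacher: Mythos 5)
Your proposal is correct and follows the paper's own route: the paper's proof of Corollary~\ref{cor:quadrics} is literally ``Immediate from Theorem~\ref{thm:main}'', and your unwinding of Definitions~\ref{defn:1regular} and~\ref{defn:koszul} at $q=1$, $i=1$ is exactly the intended content. The one step you left granted closes in a line: by Riemann--Roch, $\dim W_d = \dim H^0(C,L^d) - \dim H^0(C,L^d\otimes\OO(-dD)) = d\cdot\deg(D)$ (a computation appearing verbatim in the paper's proof of Theorem~\ref{thm:main}), so if $\deg(D)\geq 1$ there are at least $d\geq 2$ variables of degree $d$ and hence $w^2=2d=\max_{i\neq j}\{\deg(x_ix_j)\}$; while if $\deg(D)=0$ then $S$ is standard graded and the generation bound $2\leq 2d$ holds trivially (in that degenerate case only the displayed equality $2d=\max_{i\neq j}\{\deg(x_ix_j)\}$ in the statement fails, not the conclusion).
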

In other words, Corollary~\ref{cor:quadrics} implies that the degrees of the defining equations of $C$ are bounded by the maximal degree of a syzygy of $\mathfrak m$. 
A number of results in the literature have a similar form to Corollary~\ref{cor:quadrics}, showing that certain relations are generated in degree at most twice the degree of one of the $k$-algebra generators, e.g.~\cite{lrz-curves,lrz-surfaces,symonds,stackycurve}.

\medskip

Our proof of Theorem \ref{thm:main} relies on a far more general result relating geometric positivity 
to low degree syzygies of the section ring $R\coloneqq\bigoplus_{e \geq 0} H^0(C,L^e)$:

\begin{thm}\label{thm:virtualNp}
Let $C$ be a smooth curve of genus $g$, $L$ a line bundle on $C$, and $f\colon C \to \PP(W)$ a closed immersion induced by a weighted series $W$  associated to $L$ (see \S\ref{subsec:weighted series} for the definition of a weighted series).\footnote{We do not assume that $W$ is a log complete series.}
Assume $\deg(L)\geq 2g+1$  and that $\dim S_1 > g$.  
Let $F$ be the minimal free resolution of the section ring $R$ over the coordinate ring $S$ of $\PP(W)$.  The generators of each $F_i$ lie in degree $\leq w^{i+1}$ for all $i \leq  \dim W - g - 2$. 
\end{thm}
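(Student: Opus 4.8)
The plan is to translate the degree bounds on syzygies of $R$ into a vanishing statement about Koszul cohomology, and then to control that Koszul cohomology via the positivity hypothesis $\deg(L)\ge 2g+1$. The degree of the minimal $i$-th syzygies of $R$ is governed by the groups $\Tor_i^S(R,k)$, and the assertion that the generators of $F_i$ lie in degree $\le w^{i+1}$ is equivalent to the vanishing of $\Tor_i^S(R,k)$ in every internal degree exceeding $w^{i+1}$. So the first step is to set up a Koszul-type complex computing these Tor groups. Because $S$ is nonstandard graded, the relevant resolution of $k$ is not the ordinary Koszul complex but the minimal free resolution of the residue field, whose $i$-th term sits in degrees $\le w^i$ by definition of $w^i$; the key comparison is between the degrees $w^{i+1}$ appearing in the statement and the degrees $w^i$ of the syzygies of $k$, and the shift by one is exactly what a Koszul-linearity statement (Definition~\ref{defn:koszul}) should produce.

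First I would reduce the claim to showing that a certain strand of $\Tor_i^S(R,k)$ vanishes in high internal degree. Concretely, I would examine the piece of $\Tor_i^S(R,k)$ in internal degree $a$ for $a > w^{i+1}$ and express it, via the minimal resolution of $k$, as a subquotient of $\bigoplus H^0(C, L^{e})$-type terms tensored against the syzygy modules of $k$. Here is where the hypotheses $\deg(L)\ge 2g+1$ and $\dim S_1 > g$ enter: the former guarantees that $L^{\otimes e}$ is nonspecial and normally generated for $e\ge 1$ (so that the multiplication maps $H^0(L^{e})\otimes H^0(L)\to H^0(L^{e+1})$ are surjective and the higher cohomology $H^1(L^{e})$ vanishes), and the latter ensures that $S_1$ has enough sections to generate these cohomology groups in the relevant range. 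The second step is therefore to invoke these surjectivity and vanishing statements to kill the contributions to $\Tor_i$ that would otherwise land in degree $>w^{i+1}$, for $i$ within the stated range $i\le \dim W - g - 2$.

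The range restriction $i \le \dim W - g - 2$ is the signature of a duality argument, and I expect the cleanest route is to invoke (weighted) Serre or Grothendieck duality to flip a bottom-of-the-resolution statement into a top-of-the-resolution statement. Specifically, the bound $\dim W - g - 2 = (\dim S_1 - 1) - g - 1$ looks like it comes from the length of a Koszul-type complex on $\dim S_1$ variables together with the genus-$g$ correction from $H^1$. I would set up the hypercohomology spectral sequence (or a Cartan--Eilenberg-type argument) for the tensor product of the Koszul/syzygy complex of $k$ with the sheafified section complex of $R$, and then read off the degrees in which $\Tor_i$ can be nonzero. The role of $g$ is to account for the single jump coming from $H^1(C,L^{e})$, which is nonzero only in a bounded range controlled by the genus, and this is precisely why the guaranteed-linear range is cut off $g+2$ steps before the end.

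The main obstacle, I expect, is the bookkeeping of internal degrees in the nonstandard graded setting: because the variables have different weights, the ``expected'' degree of an $i$-th syzygy is the nonuniform quantity $w^{i+1}$ rather than $i+1$, and one must check that each term surviving in the spectral sequence or Koszul computation respects this bound rather than the naive uniform one. Managing the interplay between the three inequivalent notions of linearity flagged in the introduction is the crux: the statement is phrased in terms of Koszul linearity (condition (3)), so I must be careful to compare against the syzygy degrees $w^i$ of $k$ and not accidentally prove the stronger strong-linearity bound, which would be false in general. Concretely, the hard step will be verifying that the surjectivity of the multiplication maps furnished by $\deg(L)\ge 2g+1$ is enough to force vanishing precisely in degrees $>w^{i+1}$, as opposed to some coarser bound, throughout the range $i\le \dim W - g - 2$; this is where the $\dim S_1 > g$ hypothesis must be used to guarantee that the relevant multiplication maps have large enough source.
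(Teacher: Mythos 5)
Your proposal has the right skeleton but is missing the idea that actually carries the proof. What you get right: the reduction to vanishing of $\Tor_i^S(R,k)_j$ for $j > w^{i+1}$, and the instinct that the cutoff $i \le \dim W - g - 2$ signals duality. The paper does exactly this flip: since $R$ has depth $2$ (Lemma~\ref{lem:technical}), it is a Cohen--Macaulay $S$-module, so dualizing the minimal resolution gives $\beta_{i,j}(R) = \beta_{n-1-i,\,|\mathbf{d}|-j}(\omega_R)$, where $\omega_R = \bigoplus_{e} H^0(C,\omega_C\otimes L^e)$, and the claim becomes a statement about syzygies of $\omega_R$ in internal degree \emph{below} the Koszul bound $w_{n-i}$.

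The gap is what happens after the flip. The paper's mechanism is two-step: (a) Lemma~\ref{lem:delicate} shows that any syzygy of $\omega_R$ in homological degree $i'$ and internal degree $< w_{i'+1}$ must lie in the \emph{strongly linear strand} of its resolution, and (b) Theorem~\ref{MLST} (the Multigraded Linear Syzygy Theorem, proved via the weighted BGG correspondence) bounds the length of that strand by $\dim(\omega_R)_0 - 1 = g-1$. This is where both hypotheses are consumed: $\dim S_1 > g$ is precisely the condition $\dim W_1 > \dim M_0$ needed in Lemma~\ref{lem:delicate} (not a statement about ``generating cohomology groups''), and $g$ enters as $\dim H^0(C,\omega_C)$, not through a range of nonvanishing of $H^1(C,L^e)$ --- indeed, under $\deg(L)\ge 2g+1$ one has $H^1(C,L^e)=0$ for every $e\ge 1$, so there is no ``bounded range controlled by the genus'' for your spectral sequence to detect; the only surviving cohomological contribution is the single $g$-dimensional space at $e=0$, and the mechanism converting it into the homological cutoff is the strand-length bound, which your plan does not supply. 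Worse, your deliberate decision to steer away from strong linearity forecloses exactly the tool that makes the weighted case work: the paper's central difficulty is that in a nonstandard grading, membership in the strongly linear strand \emph{cannot} be read off from internal degrees (Example~\ref{easyex}), so passing from the Betti-degree bound you want (Koszul linearity, Definition~\ref{defn:koszul}) to a statement about the strand requires the genuinely delicate contraction/$\nu$-invariant argument of Lemma~\ref{lem:delicate}. Finally, your fallback of surjective multiplication maps $H^0(L^e)\otimes H^0(L)\to H^0(L^{e+1})$ is essentially Green's classical Koszul-cohomology route; the paper explicitly notes (footnote in the introduction, and \S\ref{subsec:ML}) that such approaches are obstructed here because the weighted series is generated in distinct degrees, and in any case surjectivity alone produces no homological-degree cutoff of the form $\dim W - g - 2$.
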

Theorem~\ref{thm:virtualNp} highlights that the connection between positivity of an embedding and low degree syzygies is quite robust, as it applies to many situations where normal generation fails. 
For instance, if we specialize to ordinary projective space, Theorem~\ref{thm:virtualNp} may be applied to obtain low degree syzygies even in cases where a curve is embedded by an incomplete linear series.  In fact, many of Green's results allow for incomplete linear series, and in the case of an embedding into a standard projective space by an incomplete series, Theorem~\ref{thm:virtualNp} follows from Green's Vanishing Theorem~\cite[Theorem~3.a.1]{green2}.  In the weighted projective case, 
Theorem~\ref{thm:virtualNp} can be applied quite broadly, as it does not involve a log complete hypothesis.

There is an important distinction between Theorem~\ref{thm:virtualNp} and Theorem~\ref{thm:main}: the degree bounds hold for the section ring $R$ and not for the coordinate ring $S/I_C$, respectively. For this reason, Theorem~\ref{thm:virtualNp} yields something like virtual $N_p$-conditions---where we use ``virtual'' in the sense of the theory of virtual resolutions introduced in~\cite{BES}---as $F$ is a virtual resolution of the structure sheaf $\cO_C$.  
Theorem~\ref{thm:virtualNp} shows that the connection between geometric positivity and low degree syzygies, which was first illuminated by Green, holds in tremendous generality as long as one considers {\em virtual} syzygies.

The main theme underlying the technical heart of this paper is the way that, when one passes from a standard to a nonstandard grading, notions of  linearity split apart and yet remain subtly intertwined. More specifically, each of the three weighted notions of linearity of free complexes mentioned above, and discussed in detail in \S\ref{sec:koszul linearity}, come into play in the following ways:
\begin{itemize}
	\item Koszul linearity is closely linked to geometric positivity; that is, it is the right notion for weighted $N_p$-conditions.
		\item Strong linearity---specifically the Multigraded Linear Syzygy Theorem of \cite{linear}--- is essential to our proof of our key technical result Theorem~\ref{thm:virtualNp}.\footnote{In this way, the proofs of our main results echo the proof of Green's Theorem~\ref{thm:green} via Green's Linear Syzygy Theorem, as in~\cite[\S8]{eisenbud}.  We expect that one could also use an analogue of the $M_L$-bundle approach from~\cite{GL} to obtain similar results, though such an approach is complicated by the fact that the weighted series $W$ is generated in distinct degrees.  See \S\ref{subsec:ML}.}
	\item Weighted regularity plays a crucial role in our proof of Theorem~\ref{thm:proj norm}.
\end{itemize}

\medskip
In summary, our main results build on Mark Green's insight that geometric positivity is expressed algebraically in terms of low degree syzygies, though this requires novel viewpoints on nearly all of the objects involved.  Our results provide a proof-of-concept for the broader idea that the ``geometry of syzygies'' literature has analogues in the weighted projective setting, and more generally for embeddings into toric varieties or beyond, bolstering the nascent homological theories for multigraded polynomial rings.  Our work also raises a host of new questions: what might play the role of ``scrolls'' in a weighted projective setting?  Is there a weighted analogue of Green's Conjecture (perhaps for stacky curves)?  See \S\ref{sec:questions} for an array of such questions related to the results in this paper.

\medskip

Let us now give an overview of the paper. We begin in \S\ref{sec:betti and np} with a host of examples illustrating our main results. In \S\ref{sec:embeddings}, we begin a detailed investigation of closed immersions into weighted projective spaces; we introduce in this section our notion of ``log complete series" and prove a number of foundational results.  \S\ref{sec:koszul linearity} contains a detailed discussion of the various weighted flavors of linear free complexes discussed above.  In~\S\ref{sec:technical}, we prove Theorem~\ref{thm:virtualNp}, the central technical result of the paper. In \S\ref{sec:proof of Np results}, we prove the rest of our main results.
 Finally, in \S\ref{sec:questions}, we outline some follow-up questions raised by this work.

\subsection*{Acknowledgments}
We thank David Eisenbud, Jordan Ellenberg, Matthew Satriano, Hal Schenck, Frank-Olaf Schreyer, and Greg Smith for valuable conversations. We also thank the referee for many helpful comments. 

\subsection*{Notation}
Throughout the paper, $k$ denotes a field, and the word ``variety" means ``integral scheme that is separated and finite type over $k$". Given a vector $\d = (d_0, \dots, d_n)$ of positive integers, we let $\PP(\d)$ denote the associated weighted projective space. We always assume $d_0\leq d_1 \leq \cdots \leq d_n$. We often use exponents to indicate the number of weights of a particular degree; for instance, we will write $\PP(1^3,2^2)$ for $\PP(1,1,1,2,2)$. Given a weighted projective space $\PP(\d)$, we always denote its coordinate ring by $S$. That is, $S$ is the $\Z$-graded ring $k[x_0, \dots, x_n]$ with $\deg(x_i) = d_i$. Alternatively, given a weighted vector space $W$, we write $\PP(W)$ for the corresponding weighted projective space with coordinate ring $S=\Sym(W)$.  We write $\m$ for the homogeneous maximal ideal of $S$.

\section{Examples}\label{sec:betti and np}

Before diving into the heart of the paper, we illustrate our main results with some examples. In particular, this section is intended to answer the question:  ``What does a Betti table that satisfies the weighted $N_p$-condition look like?''  
All computations in \verb|Macaulay2|~\cite{M2} of Betti tables throughout this section were performed in characteristic $0$.

\medskip

Theorem~\ref{thm:main} reduces to Green's Theorem (Theorem~\ref{thm:green}) when $\deg(D)=0$. The simplest new cases are therefore when $\deg(D)=1$ and $d=2$, and so we begin with such examples.  

\begin{example}\label{ex:P1 deg2}
Let $C=\PP^1$, $D=[0:1]$ and  $d=2$.  If $L=\cO_{\PP^1}(2)$, then the corresponding log complete series $W$ (see Definition~\ref{defn:log complete}) is $\langle s^2,st,st^3,t^4\rangle$.  This induces a closed immersion
\[
\P^1 \to \P(1^2,2^2) \text{  given by } [s:t] \mapsto [s^2:st:st^3:t^4].
\]
The defining ideal $I_C$ for the curve is generated by the $2\times 2$ minors of
\[
\footnotesize
\begin{pmatrix}
x_0&x_1^2&x_2\\
x_1&x_2&x_3
\end{pmatrix},
\normalsize
\] 
where $\deg(x_0)=\deg(x_1)=1$ and $\deg(x_2)=\deg(x_3)=2$.  Corollary \ref{cor:quadrics} implies that $I$ is generated in degree at most $4$, and we can see that this holds and is sharp.  
A direct computation confirms that $C\subseteq \PP(W)$ is normally generated (see Definition \ref{defn:proj normal}) and that $S/I_C$ is Cohen-Macaulay, as predicted by Theorem~\ref{thm:proj norm}. Theorem~\ref{thm:main} implies that this embedding satisfies the $N_2$-condition, and we can check this by observing the free resolution
\[
F = \left[ S \gets S(-3)^2 \oplus S(-4) \gets S(-5)^2 \gets 0 \right]
\]
of $S/I_C$. Indeed, $F_1$ is generated in degrees $\leq 4=w^2$, and $F_2$ is  generated in degrees $\leq 5=w^3$ and these bounds are sharp.
\end{example}

\begin{example}\label{ex:rational deg 8}
Let us continue with the setup of the previous example, but now take $L=\cO_{\PP^1}(8)$.  The corresponding log complete series $W$ is spanned by $s^8, s^{7}t, \dots,st^{7}, st^{15}$ and $t^{16}$.  This weighted series induces a map $\P^1 \to \PP(W) = \P(1^8,2^2)$. In this case, the defining ideal $I_C$ is given by the $2\times 2$ minors of the matrix
\[
\begin{pmatrix}
x_0&x_1&x_2&x_3&x_4&x_5&x_6&x_7^2&x_8\\
x_1&x_2&x_3&x_4&x_5&x_6&x_7&x_8&x_9\\
\end{pmatrix}.
\]
This ideal is generated in degree at most 4, as predicted by Corollary \ref{cor:quadrics}. Once again, one can directly check that the embedding is normally generated and that $S/I_C$ is Cohen-Macaulay. Theorem~\ref{thm:main} implies that this embedding satisfies the $N_8$-condition, and one verifies this by inspecting
the Betti table\footnote{We follow standard \emph{Macaulay2} formatting of Betti tables, where the entry in the $i^{\th}$ column and the $j^{\th}$ row corresponds to $\dim \Tor_i(S/I,k)_{i+j}$, and a dot indicates an entry of $0$.} of $S/I_C$:
\[
\footnotesize
\begin{matrix}
          & 0 & 1 & 2 & 3 & 4 & 5 & 6 & 7 & 8\\
       0: & 1 & . & . & . & . & . & . & . & .\\
       1: & . & 21 & 70 & 105 & 84 & 35 & 6 & . & .\\
       2: & . & 14 & 84 & 210 & 280 & 210 & 84 & 14 & .\\
       3: & . & 1 & 14 & 63 & 140 & 175 & 126 & 49 & 8
       \end{matrix}
  \normalsize
\]
Indeed, $F_i$ is generated in degree $\leq 3+i=w^{i+1}$ for $1\leq i \leq 8$; once again, the bounds imposed by the weighted $N_p$-conditions are sharp.
\end{example}

\begin{example}\label{ex:genus 2}
Let $C$ be the genus $2$ curve defined by the equation 
$z_{2}^{2}-z_{1}^{6}-5z_{0}^{3}z_{1}^{3}-z_{0}^{6}$ in $\mathbb P(1,1,3)$.
Suppose $d=2$ and $D$ is the single point $[1:0:1]$.
Let $L$ be a line bundle of degree 10 on $C$.
We have
\[
 \deg(L \otimes \OO(-D))=9=2g+1+q, \qquad \text{with $q=4$.}
 \]
 It follows from Riemann-Roch that $\PP(W) = \PP(1^8, 2^2)$, and so the associated log complete series induces an embedding $C\subseteq \PP(1^8, 2^2)$.  Theorem~\ref{thm:proj norm} shows that $C\subseteq \PP(1^8,2^2)$ is normally generated and that its coordinate ring is Cohen-Macaulay, which does not seem obvious (at least to these authors).
 By Theorem~\ref{thm:main}, this embedding satisfies the $N_{q+d\cdot \deg D}=N_6$-condition.  A computation in \verb|Macaulay2| yields the following Betti table for $S/I_C$:
\[
\footnotesize
\begin{matrix}
          & 0 & 1 & 2 & 3 & 4 & 5 & 6 & 7 & 8\\
       0: & 1 & . & . & . & . & . & . & . & .\\
       1: & . & 19 & 58 & 75 & 44 & 5 & . & . & .\\
       2: & . & 14 & 80 & 186 & 220 & 136 & 26 & 2 & .\\
       3: & . & 1 & 14 & 61 & 128 & 145 & 98 & 23 & 2\\
       4: & . & . & . & . & . & . & . & 6 & 2
       \end{matrix}
\normalsize
\]
Since $w^8 = 10$, we see that the $7^{\th}$ syzygies require a generator of degree $>w^8$, and thus $C$ satisfies the weighted $N_6$ condition, but not the $N_7$ condition.\footnote{Those familiar with the Green-Lazarsfeld Gonality Conjecture~\cite{GL,ein-lazarsfeld-gonality} might  wonder if one can ``see'' the gonality of $C$ in this Betti table.  The answer is ``yes'', but for a trivial reason: see \S\ref{sec:questions}.}
\end{example}

The examples so far have been in the case where $d=2$ and $\deg D=1$.  We now consider the shape of the Betti table in a slightly different setting:

\begin{example}
\label{ex:genus g}
Let us consider the case of a genus $g$ curve $C$, where $d=2$ but now $\deg D=2$.  Assume that $\deg(L \otimes \OO(-D))= 2g+1+q$ for some $q \ge 0$.  By Riemann-Roch, we have $\dim W_1 = g+q+2$ and $\dim W_2=4$, and so $C \subseteq \PP(1^{{g+q+2}},2^4)$.
Theorem~\ref{thm:main} implies that the minimal free resolution of $S/I_C$ satisfies the $N_{q+4}$-condition.  Since there are now $4$ variables of degree $>1$, the shape of the Betti table is more complicated.  Specifically, we have $w^2=4, w^3 = 6, w^4=8$, and $w^{i+1}=w^{i}+1$ for $i \ge 4$.  This implies that the Betti table of the curve has the following shape, where a symbol $*$ indicates a potentially nonzero entry:
\[
\footnotesize
\begin{matrix}
          & 0 & 1 & 2 & 3 & 4 & 5 &\dots &q+4&q+5&\dots&q+g+4 \\
       0: & * & . & . & . & . & . & \dots &.&.&\dots&.\\
       1: & . & * & * & * & * & * & \dots &*&*&\dots &*\\
       2: & . & * & * & * & * & * & \dots &*&*&\dots &*\\
       3: & . & * & * & * & * & * & \dots &*&*&\dots &*\\
       4: & . & .& * & * & * & * & \dots &*&*&\dots &*\\
       5: & . & .& . & * & * & * & \dots &*&*&\dots &*\\
       6: & . & . & . & . & . & . & \dots & .&*&\dots &*
\end{matrix}
\]
      \normalsize
By combining Remark~\ref{rmk:rows} and Corollary~\ref{cor:reg SI}, we see that the $k^{\th}$ row of the Betti table vanishes for $k > 6$.
The key moment for the weighted $N_{q+4}$-condition is in column $q+5$, where Theorem~\ref{thm:main} no longer guarantees that $F_{q+5}$ is generated in degree $\leq w^{q+6} =q+10$.
\end{example}

\begin{remark}\label{rmk:not CM}
For $C=\P^1$, the following minor variants of Example~\ref{ex:P1 deg2} both lead to non-Cohen-Macaulay examples:  $W= \langle s^2,st,st^3,t^4,t^6\rangle$ and $W' = \langle s^2,st,st^5,t^6 \rangle$.  This underscores the challenge in finding an appropriate weighted analogue of complete linear series.
\end{remark}

\section{Closed immersions into weighted projective spaces}
\label{sec:embeddings}
We now begin to lay the technical foundation for this paper, starting with a study of closed immersions into weighted projective space.
\subsection{Weighted series}\label{subsec:weighted series}

Let $Z$ be a variety and $L$ a line bundle on $Z$. A \defi{weighted series} is a finite dimensional, $\Z$-graded $k$-subspace $W \subseteq \bigoplus_{i \in \Z} H^0(Z,L^i)$. Choosing a basis $s_0, \dots, s_n$ of $W$, where $s_i \in W_{d_i} \subseteq H^0(Z, L^{d_i})$, induces a rational map $\phi_W\colon Z \dasharrow \PP(\d)$ in exactly the same way as in the case of ordinary projective space. When the intersection of the zero loci of the $s_i$ is empty, $\phi_W$ is a well-defined morphism. Let $S = k[x_0, \dots, x_n]$ be the $\Z$-graded coordinate ring of $\PP(\d)$. We will only be interested in the case where $\phi_W$ is a closed immersion; we describe sufficient conditions for this in Proposition~\ref{prop:closed embedding} below. In this case, let $I_Z \subseteq S$ denote the homogeneous prime ideal corresponding to the embedding of $Z$ in $\PP(\d)$. The \defi{homogeneous coordinate ring of $\phi_W$} is the $\Z$-graded ring $S / I_Z$.

\begin{remark}
\label{rem:path1}
Before embarking on the results in this section, we 
highlight
some key differences between the behavior of sheaves on weighted and ordinary projective spaces. 
\begin{enumerate}
\item We have $\Pic(\PP(d)) = \{\OO_{\PP(\d)}(\ell m)\}_{\ell \in \Z}$, where $m = \on{lcm}(d_0, \dots, d_n)$ \cite[Theorem 7.1(c)]{BR}; in particular, not every sheaf $\OO_{\PP(\d)}(i)$ is a line bundle. 
\item It can happen that $\OO_{\PP(\d)}(i) \otimes \OO_{\PP(\d)}(j) \ncong \OO_{\PP(\d)}(i + j)$; see e.g. \cite[pp. 134]{BR}. However, it follows from \cite[Corollary 4A.5(b)]{BR} that $\OO_{\PP(\d)}(im) \otimes \OO_{\PP(\d)}(j) \cong \OO_{\PP(\d)}(im + j)$ for all $i, j \in \Z$. 
\item  Given a graded $S$-module $M$, it is not always the case that $\widetilde{M}(j) \coloneqq \widetilde{M} \otimes \OO_{\PP(\d)}(j)$ coincides with $\widetilde{M(j)}$. Indeed, taking $M = S(i)$, this follows from (2). 
\item Not every morphism $Z \to \PP(\d)$ arises as $\phi_W$ for some weighted series $W$. For instance, take $\d = (1,1,2)$. By (1), every line bundle on $\PP(\d)$ is of the form $\OO_{\PP(\d)}(2\ell)$ for some $\ell \in \Z$. In particular, $\OO_{\PP(\d)}(1)$ is not a line bundle, and so there is no line bundle that can induce the map $\PP(\d) \xra{\id} \PP(\d)$. 
\end{enumerate}
\end{remark}

\begin{prop}\label{prop:closed embedding}
Let $W$ be a weighted series with basis $s_i \in H^0(Z,L^{d_i})$ for $0 \le i \le n$.
Assume there exists $\ell > 0$ such that the map $S_{\ell} \to H^0(Z, L^{\ell})$ induced by $\phi_W : Z \to \PP(\d)$ is surjective and that $L^{\ell}$ is very ample. The morphism $\phi_W$ is a closed immersion.
\end{prop}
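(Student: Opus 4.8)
The plan is to reduce the assertion to the very ampleness of $L^{\ell}$ by composing $\phi_W$ with the degree-$\ell$ Veronese map of $\PP(\d)$ and then invoking a cancellation property of closed immersions. The main point is that once we know the composite is a closed immersion and that the Veronese map is separated, $\phi_W$ itself must be a closed immersion.

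First I would check that $\phi_W$ is an everywhere-defined morphism. Since $L^{\ell}$ is very ample it is base-point free, so the sections in $H^0(Z,L^{\ell})$ have no common zero. The hypothesis that $S_{\ell}\to H^0(Z,L^{\ell})$ is surjective means that the degree-$\ell$ monomials in the $s_i$ span $H^0(Z,L^{\ell})$; hence if all $s_i$ vanished at a point $z$ then every section of $L^{\ell}$ would vanish at $z$, contradicting base-point-freeness. Thus the $s_i$ have empty common zero locus and $\phi_W$ is a morphism. The same computation shows $\phi_W$ factors through the open set $U\subseteq\PP(\d)$ on which the degree-$\ell$ monomials do not simultaneously vanish.

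Next I would use that the degree-$\ell$ monomials define a genuine morphism $v\colon U\to\PP^M:=\PP(S_{\ell}^{*})$, where $M+1=\dim_k S_{\ell}$. By construction the composite $v\circ\phi_W\colon Z\to\PP^M$ is the morphism attached to the sections $\{\prod_i s_i^{a_i}\}$, and by surjectivity these span $H^0(Z,L^{\ell})$. Hence $v\circ\phi_W$ is the complete linear-series map of $L^{\ell}$ followed by the linear embedding $\PP(H^0(Z,L^{\ell})^{*})\hookrightarrow\PP^M$ dual to the surjection $S_{\ell}\twoheadrightarrow H^0(Z,L^{\ell})$. Since $L^{\ell}$ is very ample the first map is a closed immersion, and a linear embedding is a closed immersion, so $v\circ\phi_W$ is a closed immersion; in particular $Z$ is projective, hence proper over $k$.

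Finally I would invoke the standard cancellation lemma: if $g\circ f$ is a closed immersion and $g$ is separated, then $f$ is a closed immersion (one factors $f=\operatorname{pr}\circ\Gamma_f$, where $\operatorname{pr}$ is a base change of $g\circ f$ and the graph $\Gamma_f$ is a base change of the diagonal $\Delta_g$, which is a closed immersion because $g$ is separated). Applying this with $f=\phi_W\colon Z\to U$ and $g=v$, which is separated since $U$ is separated over $k$, gives that $\phi_W\colon Z\to U$ is a closed immersion. Composing with the open immersion $U\hookrightarrow\PP(\d)$ yields an immersion, and since $Z$ is proper and $\PP(\d)$ is separated this immersion is proper, hence a closed immersion. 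The main obstacle is the bookkeeping forced by the fact that the Veronese map $v$ is only a rational map on $\PP(\d)$, defined precisely on $U$: one must verify that $\phi_W$ really factors through $U$, and then upgrade the resulting locally closed immersion $Z\to\PP(\d)$ to a closed one using the properness of $Z$.
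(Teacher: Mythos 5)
Your proof is correct and follows essentially the same route as the paper: compose $\phi_W$ with a degree-$\ell$ rational map on $\PP(\d)$ defined by elements of $S_\ell$, identify the composite with the complete linear series map of the very ample bundle $L^{\ell}$ to see it is a closed immersion, apply the cancellation lemma for closed immersions, and finish using properness of $Z$. The only cosmetic difference is that you use all degree-$\ell$ monomials (a Veronese-type map to $\PP(S_\ell^*)$, requiring the extra observation that the composite factors through a linear embedding), whereas the paper lifts a chosen basis of $H^0(Z,L^{\ell})$ to elements $F_i \in S_\ell$ so that the composite is literally the complete series map.
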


\begin{proof}
Let $f_0, \dots, f_r$ be a basis of $H^0(Z,L^{\ell})$.
For $0 \le i \le r$, choose $F_i \in S_{\ell }$ such that $F_i(s_0, \dots, s_n)=f_i$.  The linear series determined by the $F_i$ induces a rational map $\psi: \PP(\d) \dashrightarrow \PP^r$. Write $U$ for the domain of definition of $\psi$.  The image of $\phi_W$ lands in $U$ since the restriction of the $F_i$ to $Z$ is $f_0, \dots, f_r$, which is base-point free. By construction, the composition $\psi \circ \phi_W$ is the morphism induced by $|L^{\ell }|$. Since $L^{\ell }$ is very ample, the map $\psi \circ \phi_W$ is a closed immersion, and so $\phi_W$ is a closed immersion into $U$~\cite[0AGC]{stacks}; since $Z$ is proper, it follows that $\phi_W$ is a closed immersion as well.
\end{proof}

\begin{remark}\label{rmk:not pathologies on stack}
The pathologies described in Remark \ref{rem:path1} all disappear when one works over the associated weighted projective \emph{stack}; see~\cite[\S7]{GS} or \cite[Theorem 2.6]{perroni}. However, the stack introduces its own 
complexities.
For instance, the proof of Proposition~\ref{prop:closed embedding} fails: letting $\PP_{\text{stack}}(W)$ denote the associated stack and defining $\tau_W\colon Z \to \PP_{\text{stack}}(W)$ in the same way as $\phi_W$, the composition
$Z\xra{\tau_W} \PP_{\text{stack}}(W)\dashrightarrow \PP^r$ being a closed immersion does not imply that $\tau_W$ is a closed immersion.  For a simple counterexample, one can let $Z$ be a point and $\tau_W$ be any map to a stacky point.

\end{remark}

\begin{example}
\label{closedex}
Let $Z=\PP^1$ and $L=\cO_{\PP^1}(2)$. The weighted series $W$ spanned by $s^2, st \in H^0(Z, L)$ and $st^3, t^4 \in H^0(Z, L^2)$ induces a map $\phi_W\colon \P^1 \to \P(1,1,2,2)$ given by $[s:t] \mapsto [s^2:st:st^3:t^4]$  (this is the map in Example~\ref{ex:P1 deg2}). Applying Proposition \ref{prop:closed embedding} with $\ell = 2$ implies that $\phi_W$ is a closed immersion. Indeed, we have a commutative diagram
\[
\xymatrix{
\PP^1 \ar[rr]^-{|\cO(4)|}\ar[rrd]_-{\phi_W} && \PP^4\\
&& \PP(1,1,2,2),\ar[u]
}
\]
where the vertical arrow is given by $[x_0:x_1:x_2:x_3]\mapsto [x_0^2:x_0x_1:x_1^2:x_2:x_3]$.
\end{example}
\begin{prop}
\label{stackyprop}
Let $W$ be a weighted series with basis $s_i \in H^0(Z,L^{d_i})$ for $0 \le i \le n$ and $f\colon S \to \bigoplus_{i \ge 0} H^0(Z, L^i)$ the ring homomorphism given by $x_i \mapsto s_i$. If $\phi_W$ is a closed immersion, then $I_Z = \ker(f)$. 
\end{prop}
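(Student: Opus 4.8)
The plan is to compare the two homogeneous prime ideals $\ker(f)$ and $I_Z$ graded piece by graded piece. First I would record that $\ker(f)$ is a homogeneous ideal: since $f$ is a graded ring map with $f(S_e) \subseteq H^0(Z, L^e)$, its kernel is generated by homogeneous elements. Moreover $\ker(f)$ is prime, because the target $\bigoplus_{i \ge 0} H^0(Z, L^i)$ is a domain (as $Z$ is integral, the product of two sections each nonzero on a dense open stays nonzero). Since $I_Z$ is prime by hypothesis, it then suffices to prove that a homogeneous $g \in S_e$ satisfies $f(g) = 0$ if and only if $g \in I_Z$.

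The core step is to identify $f(g)$ with the restriction of $g$ to the image $\phi_W(Z)$. The section $f(g) = g(s_0, \dots, s_n) \in H^0(Z, L^e)$ is, by the construction of $\phi_W$, the pullback of $g$ viewed as a section of $\OO_{\PP(\d)}(e)$. Concretely, for a point $z \in Z$ I would choose a local trivialization of $L$ near $z$; in this trivialization each $s_i$ becomes a function, the point $\phi_W(z)$ is $[s_0(z) : \cdots : s_n(z)] \in \PP(\d)$, and the value of $f(g)$ at $z$ is $g(s_0(z), \dots, s_n(z))$. The homogeneity of $g$ with respect to the weighting $\d$ guarantees that the vanishing of this expression is independent of the chosen trivialization, so $g$ vanishes at $\phi_W(z)$ precisely when $f(g)$ vanishes at $z$. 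This sidesteps the pathologies of Remark~\ref{rem:path1} (that $\OO_{\PP(\d)}(1)$ need not be a line bundle, and that $\OO(i)\otimes\OO(j)$ need not be $\OO(i+j)$), since $f(g)$ is honestly a section of the genuine line bundle $L^e$ and is being tested pointwise.

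Running over all $z \in Z$, we conclude that $f(g) = 0$ in $H^0(Z, L^e)$ if and only if $g$ vanishes identically on the image $\phi_W(Z)$. Here is where the closed immersion hypothesis enters: $\phi_W$ realizes $Z$ as an integral closed subscheme $\bar Z \subseteq \PP(\d)$, and since $\bar Z$ is reduced and irreducible, the homogeneous forms vanishing on $\bar Z$ are by definition exactly the prime ideal $I_Z$. Thus $f(g) = 0 \iff g \in I_Z$ for homogeneous $g$, and by homogeneity of both ideals, $\ker(f) = I_Z$.

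I expect the main obstacle to be the second step: making the identification ``value of $f(g)$ at $z$ equals $g$ evaluated at $\phi_W(z)$'' fully rigorous while respecting the weighted grading. One must verify that the pointwise evaluation is well-defined, independent of the trivialization of $L$, precisely because $g$ is homogeneous of the correct weighted degree, and that this pointwise substitution genuinely computes the scheme-theoretic pullback $\phi_W^*$ rather than a naive one. This is exactly the place where the nonstandard grading could otherwise cause trouble, so the homogeneity bookkeeping deserves care.
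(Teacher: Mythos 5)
Your proof is correct and follows essentially the same route as the paper: both arguments rest on primality of $\ker(f)$ (coming from the target being a domain) together with the ideal--variety correspondence for $\PP(\d)$, the paper phrasing the latter as uniqueness of the homogeneous prime $P$ with $V(P)=Z$ and you phrasing it as $I_Z$ being the vanishing ideal of the reduced irreducible image. Your pointwise-evaluation step is simply a detailed unpacking of what the paper dismisses as ``clearly $V(\ker(f)) = Z$,'' so the two proofs coincide in substance.
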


\begin{proof}
The ideal $I_Z$ is the unique homogeneous prime ideal $P \subseteq S$ such that $Z= V(P)$, where $V( - )$ is the assignment that sends any homogeneous ideal in $S$ to its associated subvariety in $\PP(\d)$ (see e.g. \cite[\S 5.2]{CLS}). Since $\bigoplus_{i \ge 0} H^0(Z, L^i)$ is a domain, $\ker(f)$ is prime, and clearly $V(\ker(f)) = Z$.
\end{proof}

\begin{defn}\label{defn:degenerate}
A closed immersion $Z \subseteq \PP(\mathbf d)$ is \defi{nondegenerate} (resp. \defi{degenerate}) if the defining ideal $I_Z$  is (resp. is not) contained in $\mathfrak m^2$.
\end{defn}
Let $Z\subseteq \PP(\mathbf d)$ be a closed immersion with defining ideal $I_Z\subseteq S$.  Since any basis of $\mathfrak m/\mathfrak m^2$ can be lifted to give algebra generators for $S$, the immersion $Z \subseteq \PP(\d)$ is degenerate if and only if $I_Z$ contains an element in some minimal generating set for $\mathfrak m$.  For instance, by examining their defining ideals, one can see that the curves in Examples \ref{ex:P1 deg2} and \ref{ex:rational deg 8} are both nondegenerate.

\begin{example}
\label{ex:naive}
Suppose $Z=\PP^1$, $L = \OO(1)$, and $W = H^0(\PP^1, \OO(1)) \oplus H^0(\PP^1, \OO(2))$. This is a fairly naive way to generalize a complete linear series, as we have simply taken all sections of degrees $1$ and $2$.  The map $\phi_W\colon \PP^1 \to  \PP(1^2,2^3)$ given by $[s:t]\mapsto [s:t:s^2:st:t^2]$ is a closed immersion, by Proposition~\ref{prop:closed embedding}.
In this case, $Z \subseteq \PP(1^2, 2^3)$ is degenerate since $I_Z$ contains $z_2-z_0^2$, $z_3-z_0z_1$, and $z_4-z_1^2$.
\end{example}

\subsection{Log complete series}\label{sec:complete linear}
We now ask: what is a weighted projective analogue of a \emph{complete} linear series? Before we state our proposed definition, we fix the following notation: given a divisor $L$ and an effective divisor $D$ on a variety $Z$, we write $H^0(Z,L)_D$ for the subspace of sections that vanish along $D$.

\begin{defn}\label{defn:log complete}
Let $Z$ be a smooth projective variety, $L$ a line bundle on $Z$, $D$ an effective divisor on $Z$, and $d\geq 2$.  A weighted series $W$ is a \defi{log complete series of type $(D,d)$} if $W_1 = H^0(Z,L)_D$, $ H^0(Z,L^d) = W_d \oplus \im(\Sym_d(W_1) \to H^0(Z, L^d))$,
and $W_i=0$ for $i\ne 1,d$.
\end{defn}
\begin{example}\label{ex:P1 deg2 reboot}
Let us revisit Example~\ref{ex:P1 deg2}, where $C=\PP^1$, $D=[0:1]$ and  $d=2$.  In this case, $W_1 = \langle s^2:st\rangle$ are the sections of $L$ vanishing at $D$; and $W_2 = \langle st^3:t^4\rangle$.
\end{example}

\begin{remark}
While log complete series provide a strong analogue of complete linear series in the weighted setting, we do not claim that this is a comprehensive analogue.  In fact, one could easily imagine minor variants of our setup that would be generated in 3 or more distinct degrees.  As with analogues of linear resolutions, we expect that there are distinct analogues of complete linear series that lead in different directions.  We restrict attention to log complete series because they strike a good balance.  On one hand, they are sufficiently rich to allow for a wide range of new applications and for our overarching goal of investigating the extent to which Green's results hold in nonstandard graded settings.  On the other hand, they yield embeddings into fairly simple weighted projective spaces of the form $\PP(1^a,d^b)$, thus avoiding some of the pathologies of arbitrary weighted spaces.
\end{remark}

When $D = 0$, Definition~\ref{defn:log complete} recovers the usual notion of a complete linear series. A log complete series $W$ of type $(D, d)$ is unique up to isomorphism of graded vector spaces; we will therefore refer to \emph{the} log complete series of type $(D, d)$. Observe that, when $L^d$ is base-point free, the intersection of the zero loci of the sections in $W$ is empty, so that $W$ induces a well-defined morphism $\phi_W\colon Z \to \PP(\d)$.

\begin{lemma}
\label{lem:H0iso}
Let $Z$ be a curve, $L$ a line bundle on $Z$, and $D$ an effective divisor on $Z$. There is an isomorphism $H^0(Z, L^a)_{bD} \cong H^0(Z, L^a \otimes \OO(-bD))$ for all $a, b \in \Z$.
\end{lemma}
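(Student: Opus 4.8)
The plan is to realize both sides as global sections of two sheaves related by the canonical inclusion determined by the effective Cartier divisor $D$, and then to identify the image of one inside the other. I will focus on the substantive case $b \ge 0$; the case $b = 0$ is trivial, since both sides equal $H^0(Z,L^a)$ with the identity as isomorphism, and the remaining integer values are handled by the rational-sections reformulation discussed at the end.

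First, for $b \ge 0$ the canonical section cutting out the subscheme $bD$ gives an inclusion of sheaves $\OO_Z(-bD) \into \OO_Z$. Since $L^a$ is locally free, tensoring preserves injectivity and yields $L^a \otimes \OO_Z(-bD) \into L^a$. Applying the left-exact functor $H^0(Z,-)$ produces an injection $H^0(Z, L^a \otimes \OO(-bD)) \into H^0(Z, L^a)$, and the whole argument reduces to showing that its image is exactly the subspace $H^0(Z,L^a)_{bD}$ of sections vanishing along $bD$.

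To pin down this image, I would tensor the structure sequence $0 \to \OO_Z(-bD) \to \OO_Z \to \OO_{bD} \to 0$ of the closed subscheme $bD$ with the locally free sheaf $L^a$, obtaining a short exact sequence $0 \to L^a \otimes \OO(-bD) \to L^a \to L^a \otimes \OO_{bD} \to 0$. Taking global sections gives the left-exact sequence
\[
0 \to H^0(Z, L^a \otimes \OO(-bD)) \to H^0(Z,L^a) \xra{\rho} H^0(Z, L^a \otimes \OO_{bD}),
\]
where $\rho$ is restriction to $bD$. By definition, a section of $L^a$ vanishes along $bD$ precisely when $\rho(s)=0$, so $\ker \rho = H^0(Z,L^a)_{bD}$; exactness then identifies this kernel with the image of $H^0(Z, L^a \otimes \OO(-bD))$, giving the desired isomorphism.

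The step I expect to require the most care—and the only genuine subtlety—is matching the sheaf-theoretic condition $\rho(s)=0$ with the paper's notion of $s$ ``vanishing along $bD$,'' as well as extending the statement uniformly to all $b \in \Z$. Both are cleanly handled by the rational-sections viewpoint: regard each section as a rational section of $L^a$ and set $H^0(Z,L^a)_{bD} = \{\, s : \operatorname{div}(s) \ge bD \,\}$. For every $b \in \Z$ this subspace coincides tautologically with $H^0(Z, L^a \otimes \OO(-bD))$, since the latter consists of exactly the rational sections with $\operatorname{div}(s) \ge bD$, so the isomorphism is literally the identity. When $b \ge 0$, the condition $\operatorname{div}(s) \ge bD \ge 0$ forces $s$ to be regular with vanishing order at least $b$ along each component of $D$, recovering the subscheme-theoretic description and agreeing with the short exact sequence argument; when $b \le 0$ it instead permits controlled poles along $D$. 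Because $D$ is Cartier, the equivalence between ``vanishing along the subscheme $bD$'' and ``order $\ge b$ along $D$'' is a local computation, and this is the only point I would spell out in full.
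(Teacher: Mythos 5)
Your proposal is correct and takes essentially the same route as the paper: both tensor the structure sequence $0 \to \OO(-bD) \to \OO \to \OO_{bD} \to 0$ of the subscheme $bD$ with $L^a$ and use left-exactness of global sections to identify $H^0(Z, L^a \otimes \OO(-bD))$ with the kernel of the restriction map, which is by definition $H^0(Z, L^a)_{bD}$. The only differences are minor: the paper additionally simplifies $L^a \otimes \OO_{bD} \cong \OO_{bD}$ using that $bD$ is $0$-dimensional (a step your argument does not need), while your rational-sections discussion handles $b < 0$, a case included in the statement but passed over silently by the paper's proof.
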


\begin{proof}
Since $\OO_{bD}$ is the structure sheaf of a 0-dimensional scheme, $L \otimes \OO_{bD} \cong L$. Twisting the short exact sequence $0  \to \OO(-bD) \to \OO \to \OO_{bD} \to 0$ by $L$, we therefore arrive at the short exact sequence
$
0\to  L^a \otimes \OO(-bD)\to L^a \to  \OO_{bD} \to 0.
$
The long exact sequence in cohomology yields
$
 H^0(Z, L^a)_{bD} \coloneqq  \ker (H^0(Z, L^a) \to H^0(Z, \OO_{bD})) \cong H^0(Z,L^a \otimes \OO(-bD) ).
 $
\end{proof}

\begin{prop}\label{cor:v ample immersion}
Let $Z,L, D$, and $d$ be as in Definition~\ref{defn:log complete} and $W$ the log complete series of type $(D, d)$. Assume that $L^d$ is very ample.
\begin{enumerate}
\item The canonical map $S_{d} \to H^0(Z,L^d)$ is surjective.
\item The induced map $\phi_W : Z \to \PP(\d)$ is a nondegenerate closed immersion.
\item $W$ is maximal, in the following sense: any weighted series concentrated in degrees 1 and $d$ that properly contains $W$ is degenerate. 
\end{enumerate}
\end{prop}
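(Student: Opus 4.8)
The plan is to derive (1) and (2) fairly directly from Definition~\ref{defn:log complete}, Proposition~\ref{prop:closed embedding}, and Proposition~\ref{stackyprop}, and to reserve the real work for (3). Set $V=H^0(Z,L)$ and $H=H^0(Z,L^d)$, and note that a weighted series $W'$ concentrated in degrees $1$ and $d$ has every variable in degree $1$ or $d$, since $d\geq 2$. The organizing step I would carry out first is a degeneracy criterion valid for any such $W'$ containing $W$. For such a $W'$, the image of $S'_d\to H$ is $W'_d+\Sym_d(W'_1)\supseteq W_d+\Sym_d(W_1)=H$ (all $\Sym_d$ denoting images in $H$), so this map is surjective; hence $\phi_{W'}$ is a closed immersion by Proposition~\ref{prop:closed embedding} with $\ell=d$, and its defining ideal is $\ker(f')$ by Proposition~\ref{stackyprop}, where $f'\colon S'\to \bigoplus_i H^0(Z,L^i)$ is the associated ring map. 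Since $\m'/\m'^2$ is concentrated in degrees $1$ and $d$, and the degree-$1$ variables map to a basis of $W'_1\subseteq V$ (so are linearly independent), inspecting these two graded pieces shows that $\phi_{W'}$ is \emph{degenerate} if and only if $W'_d\cap\Sym_d(W'_1)\neq 0$ in $H$. This criterion drives both remaining parts.

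For (1): as $d\geq 2$, a monomial of weighted degree $d$ is either a single degree-$d$ variable (landing in $W_d$) or a degree-$d$ monomial in the degree-$1$ variables (landing in $\Sym_d(W_1)$), so the image of $S_d\to H$ is $W_d+\Sym_d(W_1)=H$ by Definition~\ref{defn:log complete}. For (2): $\phi_W$ is a well-defined morphism because $L^d$ is very ample, hence base-point free, so the sections of $W$ have no common zero; it is a closed immersion by Proposition~\ref{prop:closed embedding} with $\ell=d$, using (1); and it is nondegenerate by the criterion above, since $W_d\cap\Sym_d(W_1)=0$ as the sum $W_d\oplus\Sym_d(W_1)=H$ is direct.

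For (3), let $W'\supsetneq W$ be concentrated in degrees $1$ and $d$, and suppose for contradiction that $\phi_{W'}$ is nondegenerate, i.e. $W'_d\cap\Sym_d(W'_1)=0$. Then $\dim W'_d+\dim\Sym_d(W'_1)\leq\dim H$. Since $W_d\subseteq W'_d$ and $\Sym_d(W_1)\subseteq\Sym_d(W'_1)$ (as $W_1\subseteq W'_1$), while $\dim W_d+\dim\Sym_d(W_1)=\dim H$, a dimension squeeze forces $W'_d=W_d$ and $\Sym_d(W'_1)=\Sym_d(W_1)$. As $W'\neq W$, the equality $W'_d=W_d$ leaves only $W'_1\supsetneq W_1=H^0(Z,L)_D$. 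It then suffices to contradict $\Sym_d(W'_1)=\Sym_d(W_1)$.

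This final step is the crux, and I would settle it by an order-of-vanishing computation. Every element of $\Sym_d(W_1)$ is a sum of products of $d$ sections each vanishing along $D$, so it vanishes along $dD$; thus $\Sym_d(W_1)\subseteq H^0(Z,L^d)_{dD}\cong H^0(Z,L^d\otimes\OO(-dD))$ (Lemma~\ref{lem:H0iso}). Choose $s\in W'_1\setminus W_1$; since $s\notin H^0(Z,L)_D$, there is a prime component $P$ of $D$, of multiplicity $m$, with $\operatorname{ord}_P(s)<m$. Because $Z$ is a smooth variety, $\operatorname{ord}_P$ is a valuation, so $\operatorname{ord}_P(s^d)=d\cdot\operatorname{ord}_P(s)<dm$, whence $s^d\notin H^0(Z,L^d)_{dD}$ and therefore $s^d\notin\Sym_d(W_1)$. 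But $s^d\in\Sym_d(W'_1)$, contradicting $\Sym_d(W'_1)=\Sym_d(W_1)$ and completing (3). I expect the main obstacle to be exactly this strictness phenomenon---that strictly enlarging the degree-$1$ part strictly enlarges the image of the $d$-th power map---which is what makes $W$ genuinely maximal; the device that resolves it is comparing orders of vanishing along a component of $D$ where $s$ fails to vanish, and it is here that integrality/smoothness of $Z$ and the identification $H^0(Z,-)_D\cong H^0(Z,-\otimes\OO(-D))$ enter essentially.
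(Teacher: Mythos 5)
Your proposal is correct, and its skeleton matches the paper's: part (1) by inspecting monomials of weighted degree $d$, part (2) via Proposition~\ref{prop:closed embedding} with $\ell=d$ plus the direct-sum condition $W_d\cap\Sym_d(W_1)=0$, and part (3) by showing any proper enlargement creates a nonzero intersection $W'_d\cap\Sym_d(W'_1)$. Two things distinguish your write-up, both to its credit. First, you isolate and actually prove the degeneracy criterion (degenerate $\iff$ $W'_d\cap\Sym_d(W'_1)\neq 0$), using Proposition~\ref{stackyprop} and the graded pieces of $\m'/\m'^2$ in degrees $1$ and $d$; the paper uses both directions of this criterion implicitly without stating it. Second, and more substantively, the paper's proof of (3) simply asserts that enlarging $W_1$ by a section $s\notin H^0(Z,L)_D$ makes the image of $\Sym_d(W'_1)$ meet $W_d$ nontrivially; this requires knowing that $s^d\notin\Sym_d(W_1)$, i.e.\ that strictly enlarging the degree-one part strictly enlarges the image of the $d$-th symmetric power, and the paper gives no argument for it. Your order-of-vanishing computation --- every element of $\Sym_d(W_1)$ vanishes to order $\geq d\,m_P$ along each component $P$ of $D$, while $\operatorname{ord}_P(s^d)=d\operatorname{ord}_P(s)<d\,m_P$ for some $P$, using that $\operatorname{ord}_P$ is a valuation on the smooth variety $Z$ --- supplies exactly this missing justification, and your dimension squeeze cleanly reduces the general enlargement $W'\supsetneq W$ to this one case. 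So your proof is not merely correct; it is more complete than the paper's at the one point where the statement has genuine content.
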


\begin{proof}
Part (1) follows from the definition of a log complete series. We may apply Proposition~\ref{prop:closed embedding}, with $\ell = d$, to conclude that $\phi_W$ is a closed immersion.
Nondegeneracy holds since the image of the map $W_1^{\otimes d} \to H^0(Z, L^d)$ intersects $W_d$ trivially. This proves (2). If we were to add a section of $\Sym_d(W_1)$ to $W_d$, then it would be in the image of the map $W_1^{\otimes d} \to H^0(Z, L^d)$, forcing degeneracy. Similarly, if we were to add a section $s$ of $H^0(Z, L)$ to $W_1$, the image of $W_1^{\otimes d} \to H^0(Z, L^d)$ would intersect $W_d$ nontrivially; this gives (3). 
\end{proof}
For explicit examples of log complete series, see Examples~\ref{ex:P1 deg2} and \ref{ex:rational deg 8} above.

\subsection{A weighted analogue of normal generation}
\label{sec:normality}
Classically, a closed immersion of a variety $Z$ in $\PP^n$ is projectively normal if the coordinate ring $S/I_Z$ of the immersion is integrally closed \cite[Ex I.3.18]{hartshorne}. If $Z$ is normal, and the closed immersion is induced by the line bundle $L$, then the integral closure of $S/I_Z$ is the section ring $\bigoplus_{i \in \Z} H^0(Z, L^i)$, and so the immersion is projectively normal if and only if the canonical map $S \to \bigoplus_{i \in \Z} H^0(Z, L^i)$ is surjective. In this case, the line bundle $L$ is said to be \defi{normally generated} \cite{mumford-quadrics}. Since we have a short exact sequence
$$
0 \to S / I_Z \into \bigoplus_{i \in \Z} H^0(Z, L^i) \to H^1_{\fm}(S/I_Z) \to 0
$$
of graded $S$-modules, one concludes that $L$ is normally generated if and only if $H^1_{\fm}(S/I_Z) = 0$. With this in mind, we make the following

\begin{defn}\label{defn:proj normal}
Let $Z$ be a variety and $Z\subseteq \PP(\mathbf{d})$ a closed immersion defined by a weighted series $W$. We say $W$ is \defi{normally generated} if 
$H^1_{\mathfrak m} (S/I_Z)=0$.
\end{defn}

\begin{remarks}\label{rem:proj normal}
Let $Z \subseteq \PP(\d)$ be a closed immersion induced by a weighted series $W$. 
\begin{enumerate}
\item The weighted series $W$ is normally generated if and only if the depth of the $S$-module $S/I_Z$ is at least 2. In particular, if $Z$ is a smooth curve,  then $W$ is normally generated if and only if $S/I_Z$ is a Cohen-Macaulay ring.
\item Let $T = S/I_Z$.  Since $I_Z$ is prime, $H^0_{\fm}(T) = 0$, and so, by \cite[Theorem A4.1]{eisenbudbook}, we have a short exact sequence of graded $S$-modules
$$
0 \to T\into \bigoplus_{i \in \Z} H^0(Z, \widetilde{T(i)}) \to H^1_{\fm}(T) \to 0.
$$
Thus, $W$ is normally generated if and only if the canonical map $S_i \to H^0(Z, \widetilde{T(i)})$ is surjective for all $i$, echoing the classical definition.
\end{enumerate}
\end{remarks}

\begin{remark}
Unlike the classical case, normal generation of $W$ is not equivalent to $S/I_Z$ being integrally closed, even when $Z$ is normal. For instance, it follows from Theorem~\ref{thm:proj norm} that the weighted series from Example~\ref{ex:P1 deg2} is normally generated. However, using the notation of that example, the ring $S / I_C  = k[s^2, st, st^3, t^4] \subseteq k[s,t]$ is not integrally closed. Indeed, $t^2 = st^3 / st$ is in the field of fractions of $S/I_C$ but not in $S/I_C$, and it is a root of the polynomial $z^2 - t^4  \in (S/I_C)[z]$. 
\end{remark}

\begin{example}\label{ex:not normally generated}
As mentioned in the introduction, many weighted series fail to be normally generated.  For instance, take a weighted series $W$ such that $W_1$ is a base-point free, incomplete linear series that yields an embedding $Z\to \PP(W_1)$.  We have $H^1_{\mathfrak m}(S/I_Z)_1\ne 0$, and thus $W$ fails to be normally generated.  We thus see that, for very positive linear series to have any hope of normal generation, adding a base locus to $W_1$ is necessary; this observation was a key motivation for our definition of log complete series.
\end{example}

\section{Linearity of free resolutions in the weighted setting}\label{sec:koszul linearity}

There are multiple ways to extend the definition of a linear free resolution to the weighted setting, each with its advantages and disadvantages. We will consider three such notions:
\begin{enumerate}
\item Perhaps the most obvious definition of linearity in the weighted setting is \defi{strong linearity}, which requires all differentials in the resolution to be expressible as $k$-linear combinations of the variables; see Definition~\ref{def:strongly linear} below.  This notion was defined and studied in our paper \cite{linear}, and it is closely related to the multigraded generalization of the BGG correspondence~\cite{hhw}. 
\item We will often find that strong linearity is too restrictive for our purposes. There is a weaker---and more well-known---notion of linearity based on a weighted analogue of Castelnuovo-Mumford regularity and arising from invariant theory~\cite{benson, symonds}, which we call \defi{weighted regularity}. It is determined by the number of rows in the Betti table of the resolution; see Definition~\ref{defn:benson} for details. 
\item Weighted regularity, however, is too weak of a condition for us; we therefore introduce in this paper an intermediate notion between (1) and (2) called \defi{Koszul linearity} (Definition~\ref{defn:koszul}). Roughly speaking, a free resolution is Koszul linear if its Betti numbers grow no faster than those of the Koszul complex. Our definition of weighted $N_p$-conditions (Definition~\ref{defn:1regular}) is based on Koszul linearity.
\end{enumerate}

Each of (1) - (3) will be used in the proofs of our main results. In the standard graded case, each notion gives an alternative but equivalent way to view linear resolutions; Example~\ref{ex:rational deg 4} below illustrates how these notions diverge in the general weighted case. To briefly explain: while weighted regularity only depends on the number of rows in the Betti table of the resolution, Koszul linearity involves more granular information about the Betti numbers. Moreover, strong linearity cannot be detected from the Betti numbers of the resolution at all, as one can see from Example \ref{easyex} below.  See also~\cite{bounds}, which explores the relationship between these notions in greater detail.

\subsection{Strong linearity}
\label{subsec:strong}
Our most restrictive notion of linearity for nonstandard graded free resolutions is the following:
\begin{defn}[\cite{linear} Definition 1.1]\label{def:strongly linear}
A complex $F$ of graded free $S$-modules is \defi{strongly linear} if there exists a choice of basis of $F$ with respect to which its differentials may be represented by matrices whose entries are $k$-linear combinations of the variables.
\end{defn}

In the nonstandard graded setting, strong linearity of a free complex $F$ cannot be detected by the degrees of its generators, as the following simple example illustrates:

\begin{example}
\label{easyex}
Suppose $S=k[x_0,x_1]$, where the variables have degrees $1$ and $2$. Consider the complexes
$S \overset{x_0^2}{\longleftarrow} S(-2)$ and $S \overset{x_1}{\longleftarrow} S(-2)$; only the second complex is strongly linear.
\end{example}

The main goal of our paper \cite{linear} is to establish a theory of linear \emph{strands} of free resolutions in the nonstandard graded context, culminating in a generalization of Green's Linear Syzygy Theorem \cite{green}: that circle of ideas will play a key role in this paper. Before we recall the details, we briefly discuss some background on (a weighted analogue of) the Bernstein-Gel'fand-Gel'fand (BGG) correspondence. We refer the reader to \cite[\S 2.2]{tate} for a detailed introduction to the multigraded BGG correspondence, following work of \cite{hhw}. 
 
\subsubsection{The weighted BGG correspondence} 
\label{sec:BGG}
Let $E = \bigwedge_k(e_0, \dots, e_n)$  be an exterior algebra, equipped with the $\Z^2$-grading given by $\deg(e_i) = (-\deg(x_i); -1)$. Denote by $\Com(S)$ the category of complexes of graded $S$-modules and $\DM(E)$ the category of \emph{differential $E$-modules}, i.e. $\Z^2$-graded $E$-modules $D$ equipped with a degree $(0; -1)$ endomorphism that squares to 0. The weighted BGG correspondence is an adjunction
$$
\LL : \DM(E) \leftrightarrows \Com(S) : \RR
$$
that induces an equivalence on derived categories. We will only be concerned in this paper with the functor $\LL$ applied to $E$-modules: if $N$ is a $\Z^2$-graded $E$-module, the complex $\LL(N)$ has terms and differential given by
$$
\LL(N)_j = \bigoplus_{a \in \Z} S(-a) \otimes_k N_{(a;j)} \quad \text{and} \quad s \otimes n \mapsto \sum_{i = 0}^n x_is \otimes e_in.
$$
The complex $\LL(N)$ is strongly linear, and in fact every strongly linear complex of $\Z$-graded $S$-modules is of the form $\LL(N)$ for some $E$-module $N$ \cite{linear}.

\subsubsection{Strongly linear strands} 

\begin{defn}[\cite{linear}]
\label{stranddef}
Let $M$ be a graded $S$-module such that there exists $a \in \Z$ with $M_a \ne 0$ and $M_{< a} = 0$. We set $E^* = \Hom_k(E, k)$, considered as an $E$-module via contraction. The \defi{strongly linear strand} of the minimal free resolution of $M$ is $\LL(K)$, where $\LL$ is the BGG functor defined above, and 
$$
K = \ker\left( M_a \otimes_k E^*(-a;0) \xra{\sum_{i = 0}^n x_i \otimes e_i} \bigoplus_{i = 0}^n M_{a + d_i} \otimes_k E^*(-a - d_i ; -1)\right).
$$
\end{defn}

In the standard graded case, Definition~\ref{stranddef} recovers the classical notion of the linear strand of a free resolution \cite[Corollary 7.11]{eisenbud}. When $M$ is generated in a single degree, the strongly linear strand of the minimal free resolution $F$ of $M$ may be alternatively defined as follows: it is the unique maximal strongly linear subcomplex $F'$ of $F$ such that $F'$ is a summand (as an $S$-module, but not necessarily as a complex) of $F$ \cite{linear}.

A main result of \cite{linear} is a multigraded generalization of Green's Linear Syzygy Theorem \cite{green}. We  recall the statement of this theorem in the nonstandard $\Z$-graded case:

\begin{thm}[\cite{linear} Theorem 6.2]
\label{MLST'}
Let $M$ be a finitely generated $\Z$-graded $S$-module and $F$ its minimal free resolution. Suppose $M_0 \ne 0$, and $M_i = 0$ for $i < 0$. The length of the strongly linear strand of $F$ is at most $\max\{\dim M_0 - 1, \dim R_0(M)\}$,
where $R_0(M)$ is the variety of rank one linear syzygies of $M$, i.e.
$$
R_0(M) = \{w \otimes m \in W \otimes_\kk M_0 \text{ : } wm = 0 \text{ in } M\}.
$$
\end{thm}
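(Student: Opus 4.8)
The plan is to transport Green's original Koszul-cohomology argument into the weighted BGG framework established above, exploiting the fact that strong linearity is insensitive to the internal degrees of the variables, so that once the correct homological reformulation is in place, $W$ may be handled as a single ungraded vector space. First I would reformulate the conclusion homologically. By Definition~\ref{stranddef} the strongly linear strand is $\LL(K)$, and since $\LL(K)_j=\bigoplus_a S(-a)\otimes_\kk K_{(a;j)}$, the length of the strand equals the largest $\ell$ for which $K_{(\bullet;\ell)}\ne 0$. Unwinding the definition of $K$ as a kernel inside $M_0\otimes_\kk E^*$ and identifying $E^*$ with $\bigwedge W$ so that each $e_i$ acts by contraction, $K$ is exactly the kernel of the Koszul-type differential
\[
\partial\colon M_0\otimes_\kk \textstyle\bigwedge^{\ell}W \longrightarrow \bigoplus_i M_{d_i}\otimes_\kk \textstyle\bigwedge^{\ell-1}W,\qquad m\otimes\omega\mapsto \sum_i x_i m\otimes(e_i\lrcorner\,\omega),
\]
assembled from the multiplication maps $x_i\colon M_0\to M_{d_i}$. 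Thus the claim becomes: the largest $\ell$ with $\ker\partial\cap(M_0\otimes_\kk\bigwedge^{\ell}W)\ne 0$ is at most $\max\{\dim M_0-1,\dim R_0(M)\}$.

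Next I would strip the problem down to multilinear algebra. Since $\partial$, and hence $K$, depends only on $M_0$ together with the maps $x_i\colon M_0\to M_{d_i}$, I may replace $M$ by the submodule it generates in degree $0$ without affecting either side of the inequality. The statement then concerns only the single space of linear syzygies $\cK=\ker(\mu\colon W\otimes_\kk M_0\to M)$ and its subset $R_0(M)\subseteq\cK$ of decomposable (rank one) tensors $w\otimes m$ with $wm=0$: namely, the top exterior degree $\ell$ in which the generalized Koszul complex built from $\cK$ retains a cycle is bounded by $\max\{\dim M_0-1,\dim R_0(M)\}$. This is now an abstract assertion about a subspace $\cK$ of a tensor product $W\otimes_\kk M_0$, with no reference to any grading.

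The technical heart, and the step I expect to be hardest, is this multilinear assertion — the weighted avatar of Green's key lemma. I would argue by contradiction: assuming a nonzero cycle persists in exterior degree $\ell>\max\{\dim M_0-1,\dim R_0(M)\}$, I would choose a generic complete flag of one-dimensional subspaces of $W^*$ and contract by the corresponding generic covectors one at a time, using genericity to ensure that at each stage the only obstruction to exactness comes from decomposable elements of $\cK$, i.e.\ from points of the cone $R_0(M)$. A dimension count then forces either $\dim R_0(M)\ge\ell$ or $\dim M_0>\ell$, contradicting the choice of $\ell$. Making this generic-flag/secant-variety argument precise — controlling how decomposable syzygies propagate under successive contractions — is the crux; alternatively, once the reduction of the previous two paragraphs is in hand, one may simply invoke Green's lemma in its grading-free form, in which case the delicate work shifts entirely to the BGG bookkeeping verifying that $\LL(K)$ really is a subcomplex of the minimal resolution $F$.

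Finally I would record why the weighting introduces no new difficulty in the core argument. Because the exterior algebra $E$ and the functor $\LL$ treat the variables $x_i$ symmetrically, regardless of their degrees $d_i$, the contraction operators $e_i\lrcorner$ and the generic-flag argument never see the grading; a generic element of $W$ need not even be homogeneous, which would be fatal in a commutative-algebra argument but is harmless here. The $\Z$-grading re-enters only to guarantee that $\LL(K)$ is genuinely the linear strand of $F$ and that the invariant $R_0(M)$ — which mixes variables of every degree — is the correct one. This is precisely why the rank one locus defined using all of $W$, rather than merely its degree-one part, yields the sharp bound.
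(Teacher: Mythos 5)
The first thing to note is that this paper contains no proof of Theorem~\ref{MLST'}: it is imported verbatim from \cite{linear} (Theorem 6.2), where it is proved by redeveloping Eisenbud's BGG/exterior-algebra proof of Green's Linear Syzygy Theorem over the ($\Z^2$-graded) exterior algebra. So the comparison is necessarily with that source. Your opening reductions are correct and do match the spirit of that proof: the length of the strongly linear strand $\LL(K)$ is the top auxiliary degree in which $K$ is nonzero; the component of $K$ in exterior degree $\ell$ is determined by the single subspace $\wt{K} = \ker(W \otimes_\kk M_0 \to M)$ of linear syzygies; and $K$, $R_0(M)$, and $\dim M_0$ are all unchanged upon replacing $M$ by $S \cdot M_0$. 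So the theorem is indeed equivalent to a grading-free assertion about a subspace $\wt{K} \subseteq W \otimes_\kk M_0$, and your final paragraph correctly identifies why the weights are invisible at that point.

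The gap is that everything after this reduction --- which is where the entire difficulty of the theorem lives --- is missing. The argument you sketch (contract by a generic flag of covectors ``using genericity to ensure that at each stage the only obstruction to exactness comes from decomposable elements of $\cK$,'' then count dimensions) is not Green's argument, and nothing in the sketch explains why that genericity claim would hold; it is exactly the assertion to be proved. The actual proof, in both the classical case (Eisenbud, \emph{The Geometry of Syzygies}, Ch.~7) and the multigraded case of \cite{linear}, runs through a hard piece of multilinear algebra --- exterior minors and the structure theory of modules over the exterior algebra --- which manufactures, from a single nonzero cycle in exterior degree $\ell \ge \dim M_0$, an $\ell$-dimensional family of rank-one elements of $\wt{K}$; no dimension count can begin until that family is constructed. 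Your fallback, ``invoke Green's lemma in its grading-free form,'' is not an available citation: the Linear Syzygy Theorem in the literature is a statement about standard graded modules, not about abstract subspaces of $W \otimes_\kk M_0$. It \emph{can} be made rigorous --- regrade all variables to degree one, form $M' = \coker\bigl(\wt{K} \otimes S'(-1) \to M_0 \otimes_\kk S'\bigr)$ over the standard graded ring $S'$, use Eisenbud's identification of the classical linear strand of $M'$ with $\LL$ applied to the same kernel module, check $R(M') = R_0(M)$, and apply the classical theorem --- but you do not carry out this chain of identifications, and you locate the residual work in the wrong place: verifying that ``$\LL(K)$ really is a subcomplex of the minimal resolution $F$'' is not needed for the statement at all, since the strongly linear strand is \emph{defined} to be $\LL(K)$ (Definition~\ref{stranddef}).
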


The following geometric consequence of Theorem~\ref{MLST'} plays a crucial role in all of our main results. It extends to weighted projective spaces a result originally proven by Green \cite{green2} over projective space; see also \cite[Corollary 7.4]{eisenbud}. 
\begin{thm}
\label{MLST}
Let $Z$ be a variety, $L$ a line bundle on $Z$, and $W$ a weighted series associated to $L$ such that the associated map $\phi_W : Z \to \PP(\d)$ is a nondegenerate closed embedding. Let $V$ be a vector bundle on $Z$ and $M$ the $S$-module $\bigoplus_{i \in \Z} H^0(Z, V \otimes L^i)$. Assume $M_0 \ne 0$, and $M_i = 0$ for $i < 0$. 
The strongly linear strand of the minimal $S$-free resolution of $M$ has length at most $\dim{M_0} - 1$. 
\end{thm}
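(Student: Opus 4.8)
The plan is to deduce the statement directly from the Multigraded Linear Syzygy Theorem (Theorem~\ref{MLST'}), exactly paralleling how Green's geometric vanishing follows from his original linear syzygy theorem. First I would record the relevant data: here $M_0 = H^0(Z,V)$, and $M = \bigoplus_i H^0(Z, V\otimes L^i)$ is a finitely generated $\Z$-graded $S$-module (finiteness being the standard fact that the module of twisted global sections of a coherent sheaf on the projective scheme $Z$ is finite over its homogeneous coordinate ring) concentrated in nonnegative degrees with $M_0\neq 0$. Theorem~\ref{MLST'} then bounds the length of the strongly linear strand by $\max\{\dim M_0 - 1,\ \dim R_0(M)\}$, so the entire problem reduces to showing $\dim R_0(M)\leq \dim M_0 - 1$. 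I expect to prove the stronger assertion that there are \emph{no} nontrivial rank-one linear syzygies at all, i.e.\ that $R_0(M)$ consists of the zero tensor alone.

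The key step is the vanishing claim: if $w\in W$ and $m\in M_0=H^0(Z,V)$ are both nonzero, then $w\cdot m\neq 0$ in $M$. To prove it I would decompose $w=\sum_\delta w_\delta$ into its graded pieces, where $w_\delta$ lies in the degree-$\delta$ part of $W$. Under the identification of the variables $x_i$ with the basis sections $s_i$, the module action sends $w_\delta$ to multiplication by a section $t_\delta\in H^0(Z,L^\delta)$; since the $s_i$ form a basis of $W$ they are linearly independent, so $w_\delta\neq 0$ forces $t_\delta\neq 0$. The products $t_\delta\cdot m$ land in the distinct graded pieces $M_\delta=H^0(Z,V\otimes L^\delta)$, so the relation $w\cdot m=0$ decouples into $t_\delta\cdot m=0$ for every $\delta$. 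It therefore suffices to show that $t\cdot m\neq 0$ whenever $t\in H^0(Z,L^\delta)$ and $m$ are both nonzero.

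This last point is where integrality of $Z$ and local freeness of $V$ enter. Since $Z$ is irreducible and $t\neq 0$, the nonvanishing locus of $t$ is a dense open; since $m\neq 0$, its nonvanishing locus is a nonempty open; the two meet at some point $p$. At $p$ multiplication by $t(p)$ is an isomorphism of fibers $V_p\xrightarrow{\ \sim\ }(V\otimes L^\delta)_p$, so $(tm)(p)=t(p)m(p)\neq 0$, whence $tm\neq 0$. Running this for each $\delta$ forces every $w_\delta=0$, hence $w=0$, contradicting $w\neq 0$. Consequently $R_0(M)=\{0\}$, so in particular $\dim R_0(M)\leq \dim M_0 - 1$ (using $\dim M_0\geq 1$), and Theorem~\ref{MLST'} delivers the bound $\dim M_0 - 1$ on the length of the strongly linear strand.

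I expect the main obstacle to be careful bookkeeping rather than any deep difficulty: one must correctly match the abstract $S$-module multiplication $W\otimes_k M_0\to M$ with the geometric multiplication of sections, exploit the $\Z$-grading so that $wm=0$ genuinely decouples across degrees, and handle $V$ of arbitrary rank by passing to fibers at a single well-chosen point. Verifying the finite generation of $M$ and confirming that the basis property of the $s_i$ (together with nondegeneracy of the embedding) supplies the needed linear independence are the remaining routine checks.
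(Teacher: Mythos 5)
Your proposal is correct, and its overall skeleton matches the paper's proof: both reduce Theorem~\ref{MLST} to Theorem~\ref{MLST'} by showing that every rank-one linear syzygy $w \otimes m \in R_0(M)$ is trivial, and both begin by splitting $w$ into homogeneous components so that $wm = 0$ decouples degree by degree. Where you genuinely diverge is the key vanishing step. The paper argues algebraically: it chooses a maximal ideal $Q$ with $m_Q \ne 0$, uses that $M_Q$ is a free module over the localized section ring $R_Q$ (with $R$ a domain and $S/I_Z \into R$ injective by Proposition~\ref{stackyprop}) to conclude $w \in I_Z$, and then invokes nondegeneracy of the embedding to force $w = 0$. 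You instead argue geometrically and pointwise: identify each $w_\delta$ with a nonzero section $t_\delta \in H^0(Z, L^\delta)$, pick a point $p$ in the intersection of the nonvanishing loci of $t_\delta$ and $m$ (nonempty since $Z$ is integral), and observe that $(t_\delta m)(p) = t_\delta(p) \otimes m(p) \ne 0$ in the fiber. Both routes rest on the integrality of $Z$ together with local freeness of $V$, but they buy slightly different things. Your version is more elementary, and it exposes that the nondegeneracy hypothesis is not actually needed for this step: linear independence of the basis sections $s_i$, which is built into the definition of a weighted series, combined with $I_Z = \ker(f)$ from Proposition~\ref{stackyprop}, already rules out a nonzero element of $W$ acting as zero. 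The paper's localization argument, by contrast, is the direct weighted analogue of \cite[Corollary 7.4]{eisenbud} and extends essentially verbatim to torsion-free sheaves in place of vector bundles (multiplication by a nonzero element of a domain is injective on any torsion-free module), whereas your fiber-at-a-closed-point argument leans on local freeness; it could be adapted to the torsion-free case by working at the generic point instead.
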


While Theorem~\ref{MLST} follows directly from ideas in our paper \cite{linear} (cf. \cite[Corollary 1.5]{linear}), we include a detailed proof.

\begin{proof}
This follows from essentially the same argument as in \cite[Corollary 7.4]{eisenbud} (see also the proof of \cite[Corollary 1.5]{linear}). Let $m \in M_0$ and $w \in W$; recall that $W \subseteq S$ is the $\kk$-vector subspace of $S$ generated by the variables. Notice that $m \otimes w \in R_0(M)$ if and only if $m \otimes w_i \in R_0(M)$ for all homogeneous components $w_i$ of $w$. Assume $m \otimes w \in R_0(M)$ and that $w$ is homogeneous; by Theorem~\ref{MLST'}, it suffices to show that this syzygy is trivial, i.e. either $m=0$ or $w=0$.  Suppose $m\ne 0$, and let $Q$ be a maximal ideal of $S$ such that the image $m_Q$ of $m$ in the localization $M_Q$ is nonzero. Let $I_Z$ be the defining ideal of $Z$ in $\PP(W)$; since $Z$ is integral, $I_Z$ is prime. Let $w_Q$ denote the image of $w$ in $(S/I_Z)_Q$. Notice that $M_Q$ is a free $R_Q$-module, where $R$ is the ring $\bigoplus_{i \in \Z} H^0(Z, L^i)$. Since $R$ is a domain, and the natural map $S/I_Z \to R$ is injective by Proposition~\ref{stackyprop}, the relation $w_Qm_Q = 0$ forces $w_Q = 0$, which implies that $w \in P$. By the nondegeneracy of the embedding, $P$ does not contain a homogeneous linear form; we conclude that $w = 0$.%
\end{proof}

We will need one additional result concerning strongly linear strands: 
\begin{lemma}\label{lem:strongly linear strands}
Let $0\to M'\to M\to M''\to 0$ be a short exact sequence of $S$-modules.  Assume that $M'_a$ and $M_a$ are nonzero, and $M'_i =  M_i = 0$ for $i < a$. Moreover, assume $M''_a = 0$.  There is a natural isomorphism between the strongly linear strands of $M'$ and $M$.
\end{lemma}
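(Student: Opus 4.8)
The plan is to reduce the statement to the single kernel appearing in Definition~\ref{stranddef} and to show that this kernel is literally unchanged upon passing from $M$ to $M'$. Recall that the strongly linear strand of $M$ is $\LL(K)$, where
\[
K = \ker\left( M_a \otimes_k E^*(-a;0) \xra{\sum_i x_i \otimes e_i} \bigoplus_{i=0}^n M_{a+d_i} \otimes_k E^*(-a-d_i;-1) \right),
\]
and the strand of $M'$ is $\LL(K')$, with $K'$ defined by the same formula using $M'$ in place of $M$. Since $\LL$ is a functor, it suffices to produce a natural isomorphism $K' \xra{\sim} K$ of $\Z^2$-graded $E$-modules; applying $\LL$ then yields the desired isomorphism of strands. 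The conceptual heart of the argument is the observation that $K$ (and hence the entire complex $\LL(K)$) is encoded by only the graded pieces $M_a$ and $M_{a+d_i}$ together with the one-step multiplication maps $x_i \colon M_a \to M_{a+d_i}$, so it is precisely this data that must be compared.

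Next I would record what the hypotheses say about that data. Writing $\iota \colon M' \into M$ for the injection from the short exact sequence, the conditions $M'_{<a} = M_{<a} = 0$ and $M''_a = 0$ force $\iota$ to be an \emph{isomorphism} in degree $a$, so $M'_a = M_a$; in each degree $a + d_i$ it merely gives an inclusion $M'_{a+d_i} \into M_{a+d_i}$. Because $M'$ is an $S$-submodule of $M$, multiplication by $x_i$ sends $M'_a = M_a$ into $M'_{a+d_i}$, so the map $\phi \coloneqq \sum_i x_i \otimes e_i$ on $M_a \otimes_k E^*$ in fact lands in the subspace $\bigoplus_i M'_{a+d_i} \otimes_k E^* \subseteq \bigoplus_i M_{a+d_i} \otimes_k E^*$ (tensoring over the field $k$ preserves the inclusions), and there it agrees with the corresponding map $\phi'$ defined from $M'$.

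The conclusion is then immediate: for $v \in M_a \otimes_k E^* = M'_a \otimes_k E^*$ we have $\phi(v) = \phi'(v)$, viewed inside the ambient space, so $\phi(v) = 0$ if and only if $\phi'(v) = 0$ (a vector of a subspace vanishes in the subspace exactly when it vanishes in the ambient space). Hence $K = K'$ as $E$-submodules of $M_a \otimes_k E^*(-a;0)$, an identification induced by $\iota$ and therefore natural with respect to maps of short exact sequences. Applying $\LL$ produces the claimed natural isomorphism of strongly linear strands.

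I do not anticipate a serious obstacle: the only point demanding genuine care is the conceptual step of isolating that the strongly linear strand is completely determined by the kernel $K$, and thus by the module's data in degrees $a$ and $a + d_i$ alone. Once this is made explicit, the two hypotheses---$M'_a = M_a$ coming from $M''_a = 0$, and $M'_{a+d_i} \into M_{a+d_i}$ coming from injectivity of $\iota$, together with closure of $M'$ under multiplication---collapse the comparison $K' \cong K$ into a one-line verification rather than an actual computation.
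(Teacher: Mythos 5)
Your proof is correct and follows essentially the same route as the paper: both arguments reduce to showing that the kernel $K$ of Definition~\ref{stranddef} is unchanged in passing from $M$ to $M'$, using that $M''_a = 0$ forces $M'_a \cong M_a$ while injectivity gives $M'_{a+d_i} \into M_{a+d_i}$. The only cosmetic difference is that the paper packages this kernel comparison as an application of the Snake Lemma to the evident two-row commutative diagram, whereas you verify the equality of kernels directly; in this degenerate situation the two verifications are identical.
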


\begin{proof}
We assume, without loss, that $a = 0$. Let $L$ be the $\Z^2$-graded $E$-module $\bigoplus_{i = 0}^n M_{ d_i} \otimes_k E^*(-d_i ; -1)$, and define $L'$ and $L''$ similarly. We have a commutative diagram
$$
\xymatrix{
0 \ar[r] & M'_0 \otimes_k E^* \ar[d] \ar[r]^-{\cong} & M_0 \otimes_k E^* \ar[r] \ar[d] & 0 \ar[d] \ar[r] & 0 \\
0 \ar[r] & L'  \ar[r] & L  \ar[r] &  L''   \ar[r] & 0
}
$$
of $\Z^2$-graded $E$-modules, where the rows are exact, and the vertical maps are given by multiplication on the left by $\sum_{i = 0}^n x_i \otimes e_i$. Let $K$ (resp. $K'$) denote the kernel of the middle (resp. leftmost) vertical map. By the Snake Lemma, the natural map $K' \to K$ is an isomorphism, and hence the natural map $\LL(K') \to \LL(K)$ is as well. 
\end{proof}

\subsection{Weighted regularity}
Benson introduced in~\cite{benson} an analogue of Castelnuovo-Mumford regularity for nonstandard $\ZZ$-graded polynomial rings, which we call ``weighted regularity'' to emphasize its connection with weighted projective space:\footnote{This is also a special case of the notion of multigraded regularity defined by Maclagan-Smith \cite{MS}.}

\begin{defn}
\label{defn:benson}
Let $M$ be a finitely generated graded $S$-module. For each $i \ge 0$, set
$$
a_i(M) = \sup\{ j \in \Z \text{ : } H^i_{\m}(M)_j \ne 0\}.
$$ 
The \defi{weighted regularity of $M$} is $\sup\{i \ge 0 \text{ : } a_i(M)+i\}$.
\end{defn}
\begin{remark}\label{rmk:rows}
By a result of Symonds~\cite[Proposition 1.2]{symonds}, if $M$ has weighted regularity $r$, and $F$ is the minimal free resolution of $M$, then $F_j$ is generated in degree at most $r + j + \sum_{i=0}^n (\deg(x_i)-1) $.  Equivalently, the $k^{\th}$ row of the Betti table of any such module vanishes for $k > r + \sum_{i=0}^n (\deg(x_i)-1)$.
\end{remark}

\begin{example}
\label{easyex2}
Let us revisit the two resolutions from Example~\ref{easyex}. 
Recall that $S=k[x_0,x_1]$, where the variables have degrees $1$ and $2$.  Both $S/(x_0^2)$ and $S/(x_1)$ have weighted regularity $0$, and their Betti tables are both
\[
\begin{matrix}
&0&1\\
0&1&.\\
1&.&1
\end{matrix}
\]
In particular, while $S/(x_0^2)$ has weighted regularity $0$, its minimal free resolution is not strongly linear.  By contrast, any module that is generated in degree 0 and has a strongly linear free resolution is weighted 0-regular (see Remarks \ref{rmks:koszul}(1) and Proposition~\ref{prop:strongkoszul} below).
\end{example}

\begin{example}
In Corollary~\ref{cor:reg SI}, we will prove that, under Setup~\ref{setup:standard}, the weighted regularity of $S/I_C$ is $2$ if $g>0$ and $1$  if $g=0$. 
For instance, consider the genus $2$ curve from Example~\ref{ex:genus 2} embedded in $\PP(1^8,2^2)$.  Its coordinate ring has weighted regularity $2$, and so, by Remark~\ref{rmk:rows}, the Betti table has $2 + \sum_{i=0}^9 (\deg(x_i)-1) = 2 = 2+ 2 = 4$ rows.
\end{example}

\subsection{Koszul linearity}
\label{subsec:koszullinearity}
We fix the following 
\begin{notation} 
\label{notation}
Let $w^i$
(resp. $w_i$) be the sum of the $i$ largest (resp. smallest) degrees of
the variables: that is, $w^i \coloneqq \sum_{j=n-i+1}^n d_j$, and $w_i \coloneqq \sum^{i-1}_{j=0} d_j$. 
\end{notation}
If $K=K_0 \gets K_1 \gets \cdots$ is the Koszul complex on $x_0, \dots, x_n$, then $w_i$ is the smallest degree of a generator of $K_i$, and $w^i$ is the largest such degree.  

\begin{defn}
\label{defn:koszul}
A minimal free complex $[F_0 \overset{\phi_1}{\from} F_1 \overset{\phi_2}{\from}F_2   \cdots]$ of graded $S$-modules is \defi{Koszul $a$-linear} if each $F_i$ is generated in degrees $< w^{i+1} + a$; by minimal we mean $\phi_i(F_i)\subseteq \mathfrak mF_{i-1}$. We sometimes abbreviate Koszul $0$-linear to simply ``Koszul linear''.
\end{defn}

\begin{remarks}
\label{rmks:koszul}
We observe the following:
\begin{enumerate}
\item If $M$ is as in Definition~\ref{defn:koszul}, and the free resolution of $F$ is Koszul $a$-linear, then it follows from Remark~\ref{rmk:rows} that $M$ is weighted $a$-regular. The converse is false; see Example~\ref{ex:rational deg 4}.
\item The weighted $N_p$-condition from Definition~\ref{defn:1regular} is equivalent to normal generation of the weighted series and Koszul 1-linearity of the complex $[F_0 \gets \cdots \gets F_p]$.
\end{enumerate}
\end{remarks}

Of course, the Koszul complex on $x_0, \dots, x_n$ is Koszul $0$-linear. More generally, we have:
\begin{prop}
\label{prop:strongkoszul}
Let $M$ be a graded $S$-module that is generated in a single degree $a$. If the minimal free resolution $F$ of $M$ is strongly linear, then it is Koszul $a$-linear. 
\end{prop}
\begin{proof}
Since $F$ is strongly linear and $M$ is generated in a single degree, $F$ is equal to its strongly linear strand $\LL(K)$, where $K$ is as in Definition~\ref{stranddef}. It therefore follows from the definition of $K$ that $F$ is a summand (as an $S$-module, but not as a complex) of a direct sum of copies of $\LL(E^*(-a;0))$. Finally, observe that $\LL(E^*(-a;0))$ is the Koszul complex with $0^{\th}$ term generated in degree $a$; the result immediately follows. 
\end{proof}

\begin{example}
The converse of Proposition \ref{prop:strongkoszul} is false. Returning Example \ref{easyex}, the complex $S \overset{x_0^2}{\longleftarrow} S(-2)$ is Koszul $0$-linear but not strongly linear. 
\end{example}

\begin{example}\label{ex:rational deg 4}
Let $C = \PP^1$, $L = \OO_C(5)$, and $D$ the divisor $[0:1] + [1:0]$. 
The associated log complete seres induces an embedding $\PP^1 \subseteq \PP(1^4,2^4)$ given by 
\[
[s:t]\mapsto [s^{4}t:s^{3}t^{2}:s^{2}t^{3}:st^{4}:s^{9}t:s^{10}:st^{9}:t^{10}].
\]
The Betti table is
\[
\footnotesize
\begin{matrix}
          & 0 & 1 & 2 & 3 & 4 & 5 & 6\\
       \text{total:}
          & 1 & 21 & 70 & 105 & 84 & 35 & 6\\
       0: & 1 & . & . & . & . & . & .\\
       1: & . & 3 & 2 & . & . & . & .\\
       2: & . & 12 & 24 & 12 & . & . & .\\
       3: & . & 6 & 36 & 54 & 24 & . & .\\
       4: & . & . & 8 & 36 & 48 & 20 & .\\
       5: & . & . & . & 3 & 12 & 15 & 6
       \end{matrix}
      \normalsize
 \]
From this Betti table, one can check that this resolution is Koszul $1$-linear. For instance, $F_1$ has generators of degree $<5 = w^{2}+1$, $F_2$ has generators of degree $<7 = w^{3}+1$, and so on.
 
The defining ideal $I_C$ is given by the $2\times 2$ minors of the matrix $\left(\begin{smallmatrix}
       x_{0}&x_{1}&x_{2}&x_{4}&x_{5}&x_{3}^{2}&x_{6}\\
       x_{1}&x_{2}&x_{3}&x_{0}^{2}&x_{4}&x_{6}&x_{7}\\
       \end{smallmatrix}\right)$. It follows that the minimal free resolution of $S/I_C$ is the Eagon-Northcott complex of this matrix.  Since this matrix includes the entries $x_3^2$ and $x_0^2$, this minimal free resolution is {\em not} strongly linear.  Thus, even in the case of a rational curve, strong linearity is too restrictive to capture the linearity of the free resolution of the coordinate ring.

Finally, let us analyze the example from the perspective of weighted regularity. By Remark~\ref{rmks:koszul}(1), $S/I_C$ is $1$-regular; by Remark~\ref{rmk:rows}, this says precisely that the $k^{\th}$ row of the Betti table vanishes for $k > 5$.  Thus, for instance, the weighted regularity computation would imply that $F_1$ is generated in degree at most $6$.  We therefore see that weighted regularity is too weak to fully describe the situation.
\end{example}

\section{Proof of Theorem~\ref{thm:virtualNp}}
\label{sec:technical}

We begin by establishing several technical results. The first is a simple calculation:
\begin{lemma}
\label{lem:easy}
Let $S$ be as in Theorem~\ref{thm:virtualNp} and $M$ be a finitely generated $S$-module. 
Assume that $M_0\ne 0$ but $M_i=0$ for $i<0$.
\begin{enumerate}
\item If the Betti number $\beta_{i, j}(M)$ is nonzero, then $j \ge w_i$ (see Notation~\ref{notation}). 
\item Suppose there is a variable $x_\ell \in S$ that is a non-zero-divisor on $M$. Define 
$$
w_i' = \begin{cases} w_i, & i < \ell; \\ w_{i+1} - \deg(x_\ell), & i \ge \ell. \end{cases} 
$$
If $\beta_{i, j}(M) \ne 0$, then $j \ge w_i'$.
\end{enumerate}
\end{lemma}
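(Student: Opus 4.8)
The plan is to read off Betti numbers from the Koszul complex on the variables and to exploit the vanishing $M_{<0}=0$; part (2) will then be deduced from part (1) by passing to the quotient ring $\bar S = S/(x_\ell)$.

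For part (1), I would resolve the residue field $k$ by the Koszul complex $\K_\bullet$ on $x_0,\dots,x_n$, so that $\beta_{i,j}(M)=\dim_k \Tor_i^S(M,k)_j = \dim_k H_i(M\otimes_S \K_\bullet)_j$. The $i$-th term is $M\otimes_S \K_i = \bigoplus_{|I|=i} M(-\sum_{l\in I} d_l)$, the sum running over $i$-element subsets $I \subseteq \{0,\dots,n\}$, and its degree-$j$ strand is $\bigoplus_{|I|=i} M_{j-\sum_{l\in I}d_l}$. If $\beta_{i,j}(M)\neq 0$ then this strand is nonzero, so $M_{j-\sum_{l\in I} d_l}\neq 0$ for some $I$; since $M_{<0}=0$ this forces $j \ge \sum_{l\in I} d_l$, and as every such sum is at least $w_i=\sum_{m=0}^{i-1} d_m$ (the sum of the $i$ smallest degrees), we obtain $j\ge w_i$.

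For part (2), the idea is to replace $S$ by $\bar S = S/(x_\ell)$---a weighted polynomial ring in the remaining variables---and to apply part (1) there to $\bar M = M/x_\ell M$. Since $x_\ell$ is $M$-regular, the short exact sequence $0\to M(-d_\ell)\xrightarrow{x_\ell} M \to \bar M \to 0$ gives $\Tor_i^S(M,\bar S)=0$ for $i>0$; hence, tensoring a minimal $S$-free resolution $F_\bullet$ of $M$ with $\bar S$ produces a complex that is still exact (it resolves $\bar M$) and still minimal (the differentials retain entries in $\bar{\m}$), so $\beta^S_{i,j}(M)=\beta^{\bar S}_{i,j}(\bar M)$ for all $i,j$. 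As $\bar M_0 = M_0\neq 0$ and $\bar M_{<0}=0$, part (1) applies over $\bar S$: any nonvanishing $\beta^{\bar S}_{i,j}(\bar M)$ forces $j$ to be at least the sum of the $i$ smallest degrees among the variables of $\bar S$, i.e.\ among $\{d_0,\dots,d_n\}\setminus\{d_\ell\}$.

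It remains to identify this sum with $w_i'$, a short bookkeeping computation. When $i\le \ell$ the removed degree $d_\ell$ is not among the $i$ smallest, so the sum is $\sum_{m=0}^{i-1} d_m = w_i$; when $i\ge \ell$ it equals $\sum_{m=0}^{i} d_m - d_\ell = w_{i+1}-d_\ell$, and the two formulas agree at $i=\ell$ because $w_{\ell+1}-d_\ell = w_\ell$. This is exactly the piecewise definition of $w_i'$. I expect the one step requiring genuine care to be the reduction in part (2)---verifying that tensoring with $\bar S$ preserves both exactness (via $\Tor_{>0}^S(M,\bar S)=0$) and minimality---while the degree bookkeeping, though it needs attention at the boundary $i=\ell$, is routine.
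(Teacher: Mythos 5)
Your proof is correct and takes essentially the same route as the paper: part (1) by reading degrees off the Koszul complex $M\otimes_S \K_\bullet$ using $M_{<0}=0$, and part (2) by passing to $\bar S = S/(x_\ell)$, noting that $F/x_\ell F$ remains a minimal resolution of $M/x_\ell M$ and applying part (1) there. The paper's proof merely asserts the reduction in (2) without the $\Tor$-vanishing justification or the bookkeeping identifying $w_i'$ with the sums of smallest degrees in $\bar S$, both of which you supply correctly.
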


\begin{proof}
If $K$ is the Koszul complex on the variables $x_0, \dots, x_n$, then the minimal degree of an element of $\Tor_i(M,k)=H_i(M\otimes_{S} K)$ is $w_i $. This proves (1). For (2), let $F$ denote the minimal $S$-free resolution of $M$. Since $x_\ell$ is a non-zero-divisor on $M$, $F / x_\ell F$ is the minimal $S / (x_\ell)$-free resolution of $M / x_\ell M$. Now apply (1) to the $S / (x_\ell)$-module $M / x_\ell M$. 
\end{proof}

The following lemma is an analogue of a well-known result in the standard graded case and is proven in the same way as its classical counterpart.

\begin{lemma}
\label{lem:technical}
Let $C, L$, $R$, $S$, $W$, and $f\colon C \to \PP(W)$ be as in Theorem \ref{thm:virtualNp}.
\begin{enumerate}
\item The graded $S$-module $R$ has depth 2. In particular, $R$ is a Cohen-Macaulay $S$-module and a maximal Cohen-Macaulay $S / I_C$-module. 
\item Let $\om_R = \bigoplus_{i \in \Z} H^0(C, \om_C \otimes L^i)$, and denote by $|\d|$ the sum of the degrees of the variables in $S$. We have
$
\Ext^{n-1}_S(R, S(-|\mathbf{d}|)) \cong   \om_R.
$
\end{enumerate}
\end{lemma}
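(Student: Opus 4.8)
The plan is to prove both statements together by exploiting the standard machinery of local duality over the nonstandard-graded polynomial ring $S$, reducing the geometric assertions to statements about the section ring and its dualizing module.

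For part (1), I would first observe that since $C$ is a smooth curve embedded by a weighted series associated to $L$ with $\deg(L) \geq 2g+1$, the section ring $R = \bigoplus_{i \in \Z} H^0(C, L^i)$ is the normalization-like object governing the embedding. The key is to compute the local cohomology $H^i_{\m}(R)$ and show it vanishes for $i = 0, 1$. Since $R$ is a domain (it sits inside the function field of $C$) we have $H^0_{\m}(R) = 0$ immediately. For $H^1_{\m}(R) = 0$, I would use the fact that, because $\dim S_1 > g$, the variables of degree $1$ already give a large enough linear system; more precisely, one shows the natural map $S_i \to H^0(C, \widetilde{R(i)})$ is surjective in all degrees, which by the local-cohomology exact sequence (as in Remarks~\ref{rem:proj normal}(2)) forces $H^1_{\m}(R) = 0$. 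Since $\dim R = 2$ (the cone over a curve), depth $2$ means $R$ is Cohen-Macaulay as an $S$-module, and being maximal-dimensional over $S/I_C$ gives the maximal Cohen-Macaulay claim. I expect the surjectivity argument underlying $H^1_{\m}(R) = 0$ to be the main obstacle: in the standard graded case this is Castelnuovo's theorem via $\deg(L) \geq 2g+1$, but here one must account for the nonstandard grading and the presence of higher-weight variables, using that the degree-$1$ part already separates enough.

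For part (2), the strategy is to apply graded local duality. Since $S$ is a Gorenstein graded polynomial ring of dimension $n+1$ with canonical module $S(-|\d|)$, local duality gives $H^i_{\m}(R)^{\vee} \cong \Ext^{n+1-i}_S(R, S(-|\d|))$, where $(-)^{\vee}$ denotes graded Matlis duality. By part (1), $R$ is Cohen-Macaulay of depth $2$, so the only nonvanishing local cohomology is $H^2_{\m}(R)$, and correspondingly the only nonvanishing Ext is $\Ext^{n-1}_S(R, S(-|\d|))$, the canonical module $\omega_R$ of $R$. It remains to identify this abstract canonical module with the concrete sheaf-cohomological object $\bigoplus_{i} H^0(C, \om_C \otimes L^i)$. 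For this I would invoke Serre duality on the curve $C$: the graded piece $H^2_{\m}(R)_{-i}$ is related by the cone/cohomology sequence to $H^1(C, L^i)$, which by Serre duality is dual to $H^0(C, \om_C \otimes L^{-i})$; dualizing and reindexing yields exactly $\om_R$ as claimed.

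The cleanest writeup treats both parts via the single computation of $H^{\bullet}_{\m}(R)$: once I establish $H^0_{\m}(R) = H^1_{\m}(R) = 0$ and identify $H^2_{\m}(R)$ with the Matlis dual of $\om_R$ through Serre duality on $C$, part (1) follows from the depth interpretation and part (2) follows from local duality. The main technical care will be in tracking the grading shifts correctly through local duality and Serre duality in the weighted setting, since the twist by $S(-|\d|)$ replaces the standard $n+1$ shift and must be reconciled with the Serre-duality twist by $\om_C$ on the curve; but these are bookkeeping matters once the vanishing of $H^1_{\m}(R)$ is in hand, so the genuine content remains the Castelnuovo-type surjectivity in part (1).
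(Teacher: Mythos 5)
Your part (2) is exactly the paper's argument (graded local duality over the Gorenstein ring $S$ with canonical module $S(-|\d|)$, combined with the identification $H^2_{\m}(R) \cong \bigoplus_i H^1(C,L^i)$ and Serre duality on $C$), and your reduction of part (1) to the vanishing of $H^0_{\m}(R)$ and $H^1_{\m}(R)$ is also correct. The problem is the step you yourself flag as the ``genuine content'': your proposed proof of $H^1_{\m}(R)=0$ via a Castelnuovo-type surjectivity is both the wrong map and false in this generality.

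Concretely: the exact sequence of \cite[Theorem A4.1]{eisenbudbook} for $R$ reads
$$
0\to H^0_{\m}(R) \to R \to \bigoplus_{i \in \Z} H^0\bigl(\PP(W), \widetilde{R(i)}\bigr) \to H^1_{\m}(R)\to 0,
$$
so what is needed is surjectivity of $R_i \to H^0(\PP(W), \widetilde{R(i)})$, not of $S_i \to R_i$. Surjectivity of $S_i \to R_i$ for all $i$ is precisely normal generation, which Theorem~\ref{thm:virtualNp} explicitly does \emph{not} assume (see its footnote and Example~\ref{ex:not normally generated}: one can have $\dim S_1 > g$ and $\deg(L)\ge 2g+1$ with $W_1$ an incomplete very ample series, and then $S_1 \to R_1$ already fails to be surjective). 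Even restricting to the log complete setting would make your argument circular, since that surjectivity is Theorem~\ref{thm:proj norm}, whose proof in the paper relies on this very lemma. The paper's actual argument is much simpler and uses no positivity at all: since $R$ is by definition the \emph{full} section ring, the sheaf $\widetilde{R(i)}$ is the pushforward of $L^i$ along $f$, so $H^0(\PP(W), \widetilde{R(i)}) = H^0(C, L^i) = R_i$; that is, $R$ is $\m$-saturated by construction, and the exact sequence above immediately gives $H^0_{\m}(R) = H^1_{\m}(R) = 0$. Note that neither $\deg(L)\ge 2g+1$ nor $\dim S_1 > g$ is used anywhere in this lemma --- those hypotheses enter only later in the proof of Theorem~\ref{thm:virtualNp} --- which is a good sanity check that invoking positivity here was a wrong turn.
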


\begin{proof}
We observe that the canonical map $R \to \bigoplus_{i \in \Z} H^0(\PP(W), \widetilde{R(i)} )$ is an isomorphism, i.e. $R$ is $\m$-saturated. 
By \cite[Theorem A4.1]{eisenbudbook}, we have an exact sequence
$$
0\to H^0_{\mathfrak m} (R) \to R \xra{\cong} \bigoplus_{i \in \ZZ} H^0(\PP(W), \widetilde{R(i)}) \to H^1_{\mathfrak m} (R)\to 0
$$
and isomorphisms
\begin{equation}
\label{eqn:isos}
H^{j+1}_{\m} (R) \cong \bigoplus_{i \in \Z} H^j(\PP(W), \widetilde{R(i)}) = \bigoplus_{i \in \Z} H^j(C, L^i)
\end{equation}
for $j > 0$. In particular, we have $H^i_{\mathfrak m} (R) = 0$ for $i = 0, 1$; that is, $R$ has depth 2. Part (1) now follows from the observation that $\dim {S / I_C} = 2$. As for (2): given a $\Z$-graded $k$-vector space $V$, let $V^*$ denote its graded dual. We have
$
\Ext^{n-1}_S(R, S(-|\d|))  \cong H^2_\fm(R)^*  \cong \bigoplus_{i \in \Z} H^1(C, L^i)^*\cong \om_R,
$
where the first isomorphism follows from local duality, the second from \eqref{eqn:isos}, and the third from Serre duality. 
\end{proof}

Next, we need the following strengthening of
Theorem~\ref{MLST}:

\begin{lemma}\label{lem:delicate}
Suppose we are in the setting of Theorem~\ref{MLST}, and assume $\dim W_1 > \dim M_0$. Let $F$ be the minimal $S$-free resolution of $M$. Any summand of $F_i$ generated in degree $j$ for some $j < w_{i+1}$ (see Notation~\ref{notation}) lies in the strongly linear strand of $F$. In particular, if $\beta_{i,j}(M)\ne 0$ for some $j < w_{i+1}$, then $i \le \dim M_0 - 1$.
\end{lemma}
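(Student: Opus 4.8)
The plan is to compare the minimal free resolution $F$ with its strongly linear strand $\LL(K)$ (Definition~\ref{stranddef}) degree by degree in low internal degrees. Since $\LL(K)$ is an $S$-module direct summand subcomplex of $F$ by \cite{linear}, I would write $F \cong \LL(K) \oplus C$ as graded $S$-modules, so that the first assertion becomes the vanishing $\beta_{i,j}(C) = 0$ for all $j < w_{i+1}$. Granting this, the ``in particular'' is immediate: Theorem~\ref{MLST} forces $\LL(K)_i = 0$ once $i > \dim M_0 - 1$, so any nonzero $\beta_{i,j}(M)$ with $j < w_{i+1}$ must be a strand generator and hence occur in homological degree $i \le \dim M_0 - 1$. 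First I would record the role of the hypothesis $\dim W_1 > \dim M_0$: it guarantees at least $\dim M_0 + 1$ variables of degree $1$, so that $w_i = i$ for all $i \le \dim M_0 + 1$ and, by Lemma~\ref{lem:easy}(1), the low-degree window $w_i \le j < w_{i+1}$ collapses to the single value $j = i$ throughout the range $i \le \dim M_0$.

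The main computational input is a degree count. Writing $d(T) = \sum_{k \in T}\deg(x_k)$ for a subset $T$ of the variables, note that for $j < w_{i+1}$ no size-$(i+1)$ subset has $d(T) \le j$, so the term $(M \otimes_S \kos)_{i+1}$ of the Koszul complex vanishes in internal degree $j$. Consequently there are no Koszul boundaries in these degrees, and $\Tor_i(M,k)_j$ is computed by the full cycle space $Z_{i,j} \subseteq (M \otimes_S \kos)_{i,j}$. On the other hand, unwinding the functor $\LL$ and the definition of $K$ identifies the degree-$j$ generators of $\LL(K)_i$ with the subspace of cycles supported on $M_0$, namely those $\sum_{|T| = i} m_T \otimes e_T$ with every $m_T \in M_0$ (equivalently $d(T) = j$). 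Thus the containment I must establish reduces to showing that, for $j < w_{i+1}$, every Koszul $i$-cycle of internal degree $j$ is $M_0$-supported.

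In the range $i \le \dim M_0$ this is immediate: here $j = i = w_i$, so in a cycle $\sum_{|T| = i} n_T \otimes e_T$ each occurring $T$ satisfies $d(T) \le i$ and $|T| = i$, forcing $d(T) = i$, i.e.\ $T$ consists of degree-$1$ variables and $n_T \in M_0$; such cycles are exactly the strongly linear ones and hence lie in $\LL(K)$. Combined with the previous paragraph, this already settles the ``in particular'' in the crucial range. The remaining, genuinely harder, case is $i > \dim M_0$, where the window $[w_i, w_{i+1})$ has width $d_i \ge 2$ and higher-weight variables enter. Here the plan is to filter $M \otimes_S \kos$ by the exterior weight $d(T)$; the associated-graded differential removes only degree-$1$ variables, reducing the low-degree cycle computation to the Koszul complex on the $\ge \dim M_0 + 1$ degree-$1$ variables, with the higher-degree variables as spectators. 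In this reduced setting the problem becomes the familiar standard-graded linear-syzygy question, and I would apply the Multigraded Linear Syzygy Theorem (Theorem~\ref{MLST'}) together with the nondegeneracy of the embedding (which, exactly as in the proof of Theorem~\ref{MLST}, kills all rank-one linear syzygies) to conclude that no non-$M_0$-supported class survives, whence $\beta_{i,j}(C) = 0$. Controlling this weight filtration and the bookkeeping of its differentials uniformly across all homological degrees is the step I expect to be the main obstacle; everything else is either a degree count or a direct appeal to the strand formalism of \cite{linear}.
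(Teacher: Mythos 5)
Your reduction is the right one, and it matches the paper's: in bidegrees $(j;i)$ with $j < w_{i+1}$ there are no Koszul boundaries (since $(M \otimes_S \bigwedge W)_{i+1}$ vanishes in those internal degrees), so $\Tor_i(M,k)_j$ is the full cycle space, and the strand generators of $\LL(K)_i$ in degree $j$ are exactly the cycles supported on $M_0$; the lemma is therefore equivalent to the assertion that every low-degree cycle is $M_0$-supported. Your treatment of the range $i \le \dim M_0$ is correct, and in fact the same pure degree count disposes of everything up to $i \le \dim W_1 - 1$: as long as $i + 1 \le \dim W_1$ one has $w_{i+1} = i+1$, so \emph{every chain}, not just every cycle, in internal degree $j \le i$ is forced into $M_0 \otimes \bigwedge^i W_1$. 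Note that your claim that the window $[w_i, w_{i+1})$ acquires width $d_i \ge 2$ as soon as $i > \dim M_0$ is not right: the width stays $1$ until $i = \dim W_1$, which is where the real difficulty begins --- this is exactly the paper's inequality \eqref{eqn:pg1}.

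The gap is that the case $i \ge \dim W_1$, which is the entire content of the lemma (and the case needed for Theorems~\ref{thm:virtualNp} and~\ref{thm:main}), is left as an unexecuted plan. Your filtration idea --- filter $\bigwedge W \otimes_k M$ by exterior weight in $W_{>1}$, so that the associated graded differential is the Koszul differential over $S' = \Sym(W_1)$ with $\bigwedge W_{>1}$ as spectators --- can in fact be pushed through, but the ``bookkeeping'' you defer is where all the work lies, and it needs two inputs you never mention. First, the degree count on the $E_1$ page $\Tor^{S'}_q(k,M) \otimes \bigwedge^p W_{>1}$ confines contributions in the window $j < w_{i+1}$ to $q \in \{\dim W_1 - 1, \dim W_1\}$. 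The $q = \dim W_1 - 1$ piece is forced to be linear and is killed by the linear syzygy theorem over $S'$ plus nondegeneracy \emph{plus} the hypothesis $\dim W_1 > \dim M_0$; this is where that hypothesis must do its real work, whereas your proposal uses it only to collapse windows in the easy range. The $q = \dim W_1$ piece is top Koszul homology over $S'$, can sit in non-linear internal degrees, and is not controlled by any linear syzygy theorem; it vanishes only because every degree-one variable is a non-zero-divisor on $M = \bigoplus_i H^0(Z, V \otimes L^i)$, a fact your sketch never invokes. Second, $M$ is typically not finitely generated as an $S'$-module, so applying Theorem~\ref{MLST'} (or Green's theorem) over $S'$ requires a truncation argument, e.g.\ replacing $M$ by the $S'$-submodule generated by $M_0$. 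By contrast, the paper avoids the spectral sequence entirely: it works with a single cycle $\sigma = \sum_\ell \sigma_\ell$, introduces the invariant $\nu$ counting missing degree-one variables, proves via the quotient $S \to S/(x_i)$ by a degree-one non-zero-divisor that nonzero classes have $\nu \ne 0$, and in the critical case contracts $\sigma$ by an element of $\bigwedge W_{>1}^*$ chosen to kill every $\sigma_\ell$ with $\ell > 0$ but not $\sigma_0$, manufacturing an $M_0$-supported cycle in homological degree $\dim W_1 - 1$ and contradicting Theorem~\ref{MLST} precisely because $\dim W_1 > \dim M_0$. So your route is genuinely different and viable, but as written the crux of the lemma is missing.
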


In the standard graded case, the first  statement in Lemma~\ref{lem:delicate} is tautological: it says that, if a summand of $F_i$ is generated in degree $i $, then it is in the linear strand. However, in the weighted setting, the strongly linear strand cannot be interpreted in terms of Betti numbers (see, for instance, Example~\ref{easyex}), and so Lemma~\ref{lem:delicate} is not at all obvious in general; indeed, our proof is a bit delicate. 

\begin{proof}[Proof of Lemma~\ref{lem:delicate}]
The second statement follows immediately from the first, by Theorem~\ref{MLST}. 
Let $K$ be the Koszul complex on the variables of $S$. We consider classes in $\Tor^S_*(k, M)$ as homology classes in $K \otimes_S M \cong \bigwedge W \otimes_k M$, and we fix once and for all an embedding $\Tor^S_*(k, M) \into Z(\bigwedge W \otimes_k M)$ of $\Z^2$-graded $k$-vector spaces, where the target denotes the cycles in $\bigwedge W \otimes_k M$. In this proof, we will identify classes in $\Tor^S_*(k, M)$ with cycles in $\bigwedge W \otimes_k M$ via this embedding. We may decompose any element $\sigma \in \bigwedge W \otimes_k M$ as 
$\sum_{i\geq 0} \sigma_i$, where $\sigma_i \in \bigwedge W \otimes_k M_i$. 
Let $W_{>1} = \bigoplus_{i > 1} W_i$, so that $\bigwedge W = \bigwedge W_1 \otimes_k \bigwedge W_{>1}$. We may write any $\sigma \in \bigwedge W \otimes_k M$ as $\sum \alpha \otimes \beta \otimes m_{\alpha,\beta}$, where the sum ranges over all pairs $(\alpha, \beta)$ such that $\a$ is an exterior product of basis elements of $W_1$, and $\beta$ is an exterior product of basis elements of $W_{>1}$; here, each $m_{\a,\b}$ is an element of $M$.  We call each nonzero $\alpha \otimes \beta \otimes m_{\alpha,\beta}$ in this sum a \defi{term} of $\sigma$. It is possible that $\a$ (resp. $\b$) is an empty product of basis elements, in which case $\a$ (resp. $\b$) is $1 \in \bigwedge^0 W_1$ (resp. $1 \in \bigwedge^0 W_{>1}$). Given a nonzero element $\sigma \in \bigwedge W \otimes_k M$, we define
$$
\nu(\sigma) = \max\{ m \text{ : } \text{a term of $\sigma$ lies in } \bigwedge^{\dim W_1 - m}W_1 \otimes_k \bigwedge W_{>1}\otimes_k M  \}.
$$
The function $\nu$ measures the maximal number of degree $1$ elements that do not appear in one of the $\alpha$'s. For instance, if $\nu(\sigma)=0$, then, for every term $\alpha \otimes \beta \otimes m_{\alpha,\beta}$ of $\sigma$, $\alpha$ is the product of all of the degree $1$ variables. Let us now prove the following:

\begin{claim*}
If $\sigma$ is a nonzero class in $\Tor_*^S(k, M)$, then $\nu(\sigma) \ne 0$.
\end{claim*}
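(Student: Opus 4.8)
The plan is to prove the claim by contradiction: suppose $\sigma$ is a nonzero class in $\Tor^S_*(k,M)$ with $\nu(\sigma)=0$. By definition of $\nu$, every term of $\sigma$ has the form $\alpha \otimes \beta \otimes m_{\alpha,\beta}$ where $\alpha$ is the full wedge product of \emph{all} degree-$1$ basis vectors of $W_1$. Writing $\Omega = e_1 \wedge \cdots \wedge e_{\dim W_1}$ for this top exterior product, we may thus factor $\sigma = \Omega \wedge \tau$ for some $\tau \in \bigwedge W_{>1} \otimes_k M$. The goal is to show that such a $\sigma$ cannot be a nonzero homology class, i.e.\ that it is either not a cycle or is a boundary, contradicting $\sigma \neq 0$ in $\Tor$.

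First I would analyze the cycle condition. The Koszul differential $\partial$ on $\bigwedge W \otimes_k M$ is $\partial(\omega \otimes m) = \sum_i (e_i \lrcorner\, \omega) \otimes x_i m$ (contraction against each variable, multiplying into $M$). Since $\sigma = \Omega \wedge \tau$ already contains every degree-$1$ variable, contracting $\sigma$ against any degree-$1$ variable $x_i$ ($0 \le i < \dim W_1$) produces a term still divisible by the remaining $\dim W_1 - 1$ degree-$1$ vectors, while contracting against higher-degree variables only touches $\tau$. The strategy is to exploit the hypothesis $\dim W_1 > \dim M_0$: because $\sigma$ is a cycle and $M_0$ has dimension strictly less than the number of degree-$1$ variables, the ``syzygy'' data recorded by $\sigma$ in the top $\bigwedge^{\dim W_1}W_1$ slot must be governed by the rank-one linear syzygies $R_0(M)$. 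Here I would invoke the structure underlying Theorem~\ref{MLST}: a class supported entirely in the top degree-$1$ wedge corresponds to a linear syzygy, and the nondegeneracy of the embedding (via Proposition~\ref{stackyprop} and the domain structure of $R$) forces all such rank-one syzygies to be trivial, exactly as in the proof of Theorem~\ref{MLST}.

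More precisely, I expect the key mechanism to be a dimension count. Consider the component $\sigma_0 = \Omega \wedge \tau_0$ where $\tau_0 \in \bigwedge W_{>1}\otimes_k M_0$ is the piece landing in $M_0$. The cycle condition on $\sigma$, restricted to the lowest internal degree, forces $\sigma_0$ to encode a collection of linear relations among elements of $M_0$ indexed by the degree-$1$ variables. Since $\dim W_1 > \dim M_0$, any $k$-linear map $W_1 \to M_0$ (which is what such a top-wedge cycle amounts to, via the identification of $\Omega\wedge(-)$ with evaluation) must have a kernel, and the nondegeneracy hypothesis combined with the domain property of $R$ rules out nontrivial rank-one syzygies $w \otimes m$ with $w \in W_1$. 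This should force $\tau_0 = 0$, and then an inductive descent on the $M$-grading (or equivalently on $\nu$ applied to the boundary correcting $\sigma$) should show $\sigma$ is a boundary.

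The hard part will be making rigorous the passage from ``$\sigma$ is supported in the top degree-$1$ wedge'' to ``$\sigma$ records a rank-one linear syzygy,'' and controlling the interaction between the degree-$1$ contractions and the $\bigwedge W_{>1}$-factor $\tau$ across \emph{all} internal degrees simultaneously, not just in degree $0$. I anticipate needing a careful bookkeeping argument: writing the cycle condition $\partial \sigma = 0$ as a system separating the degree-$1$ and higher-degree contributions, then using that every term of $\sigma$ saturates the degree-$1$ wedge to conclude that the higher-degree contractions must vanish independently. This decoupling, together with the rank-one syzygy vanishing from Theorem~\ref{MLST}'s nondegeneracy argument, is where the real delicacy lies, since the multiple distinct degrees among the higher-weight variables mean the Koszul differential mixes internal degrees in a way with no analogue in the standard-graded setting.
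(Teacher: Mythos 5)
Your proposal starts on the right foot: the decomposition $\sigma = \Omega \wedge \tau$ with $\tau \in \bigwedge W_{>1} \otimes_k M$, and the observation that the cycle condition decouples the degree-one contractions from the higher-degree ones, are both correct and are a viable opening. But the engine you propose to finish the argument --- the rank-one syzygy variety $R_0(M)$, Theorem~\ref{MLST}, a dimension count from $\dim W_1 > \dim M_0$, and an ``inductive descent'' showing $\sigma$ is a boundary --- has a genuine gap, and it is exactly the one you flag yourself. By definition $R_0(M) \subseteq W \otimes_k M_0$, so ``triviality of rank-one linear syzygies'' only constrains coefficients of $\tau$ lying in internal degree $0$; the coefficients $m_{\alpha,\beta}$ of a top-wedge cycle can lie in \emph{every} internal degree of $M$, and your plan offers no mechanism to control those. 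Moreover, the hypothesis $\dim W_1 > \dim M_0$ is a red herring: the Claim does not need it (it is needed elsewhere in Lemma~\ref{lem:delicate}, but not here), so any argument leaning on that inequality is aimed at the wrong target.

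The ingredient that actually closes your decoupling argument is much simpler: every degree-one variable is a non-zero-divisor on \emph{all} of $M$ (this is the nondegeneracy-plus-torsion-freeness argument from the proof of Theorem~\ref{MLST}, which is not restricted to degree $0$). Writing $\tau = \sum_\beta \beta \otimes m_\beta$, the degree-one part of $\partial\sigma = 0$ reads $\sum_{t,\beta} \pm (e_1 \wedge \cdots \widehat{e_t} \cdots \wedge e_r) \wedge \beta \otimes x_t m_\beta = 0$; since these wedge monomials are linearly independent, this forces $x_t m_\beta = 0$ for every degree-one variable $x_t$ and every $\beta$, hence $m_\beta = 0$, hence $\tau = 0$ and $\sigma = 0$ \emph{as a cycle} --- no boundary argument or descent is needed at all. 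The paper's own proof is even shorter and avoids the decoupling entirely: fix one degree-one variable $x_i$ regular on $M$; since minimal free resolutions reduce modulo a regular linear variable, the surjection $\bigwedge W \otimes_k M \onto \bigwedge \overline{W} \otimes_k \overline{M}$ (killing $x_i$) induces an isomorphism $\Tor^S_*(k,M) \cong \Tor^{\overline{S}}_*(k,\overline{M})$; but any cycle with $\nu(\sigma) = 0$ has every term divisible by $e_i$ and so maps to zero, forcing its class to vanish. Either route is complete; your proposal, as written, is not.
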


Indeed, let $x_i$ be a degree 1 variable, $\overline{W}$ the quotient of $W$ by the span of $x_i$, and $\overline{M}$ the corresponding module $M/(x_i)$ over $\overline{S} = S/(x_i)$.  Since $x_i$ is a regular element on $M$, the surjection
$\bigwedge W\otimes_k M \onto \bigwedge \overline{W} \otimes_k \overline{M}$
induces an isomorphism $\theta : \Tor^S_*(k,M) \xra{\cong} \Tor_*^{\overline{S}}(k,\overline{M})$ on homology. Since $\theta(\sigma) \ne 0$, 
$\nu(\sigma)$ must be nonzero; this proves the claim.

Now, let $\sigma$ be a nonzero class in $\Tor^S_i(k, M)_j$, where $j < w_{i+1}$. It suffices to show that $\sigma = \sigma_0$; this implies that $\sigma$ lies in the strongly linear strand. Assume, toward a contradiction, that $\sigma_\ell \ne 0$ for some $\ell > 0$. Since $\sigma_\ell \in \bigwedge^i W \otimes M_\ell$, we have $w_i + \ell \le j  < w_{i+1}$.  Recalling that $w_{i+1} - w_i = d_{i+1} \coloneqq \deg(x_{i+1})$, this implies $d_{i+1} > \ell \geq 1$.  We conclude that \begin{equation}\label{eqn:pg1}
i \geq \dim W_1.
\end{equation}
There are two cases to consider.
\vskip\baselineskip
\noindent \emph{Case 1: $\nu(\sigma_\ell)>0$ for some $\ell > 0$.}
In this case, $\sigma_\ell$ has some term $\alpha \otimes \beta \otimes m_{\alpha,\beta}$ such that $\alpha$ is not divisible by a degree $1$ variable; without loss of generality, let us say $\alpha$ is not divisible by $x_0$.  It follows that $\deg(\alpha\otimes \beta) \geq \deg(x_1x_2\cdots x_i)=w_{i+1}-1$. Thus,
\[
\deg(\sigma_\ell) = \deg(\alpha\otimes \beta) + \ell \geq w_{i+1}-1+\ell \ge w_{i+1}.
\]
This is impossible, since $\deg(\sigma_\ell) = \deg(\sigma) <w_{i+1}$. 
\vskip\baselineskip
\noindent \emph{Case 2: $\nu(\sigma_\ell) = 0$ for all $\ell > 0$.} 
For every term $\alpha\otimes\beta\otimes m_{\a, \b}$ of $\sigma_\ell$ for $\ell > 0$, we have $\beta \in \bigwedge^{i-\dim W_1}W_{>1}$. On the other hand, it follows from the Claim above that there must be some term $\alpha' \otimes\beta'\otimes m_{\a',\b'}$ of $\sigma_0$ such that $\beta' \in \bigwedge^{i-\dim W_1+t}W_{>1}$ for some $t>0$; recall that, by \eqref{eqn:pg1}, $i - \dim W_1 \ge 0$. Let $E = \bigwedge W^*$, and notice that $\bigwedge W \otimes_k M$ is an $E$-module via the contraction action of $E$ on $\bigwedge W$. We may choose $f \in \bigwedge^{i-\dim W_1+1}W_{>1}^* \subseteq E$ such that $f\sigma_0\ne 0$; notice, however, that $f\sigma_\ell=0$ for all $\ell>0$. Thus, $f\sigma = f\sigma_0=(f\sigma)_0 \in \bigwedge W \otimes M_0$. Moreover, since $\sigma \in \bigwedge W \otimes_k M$ is a cycle, $f\sigma$ is also a cycle, as the Koszul differential on $\bigwedge W \otimes_k M$ is $E$-linear. Thus, since $f\sigma = (f\sigma)_0$, it follows from the definition of the strongly linear strand (Definition~\ref{stranddef}) that $f\sigma$ determines a summand of the strongly linear strand of $F$. But $f\sigma$ has homological degree $i- (i-\dim W_1 + 1)= \dim W_1 -1$, and so $\dim W_1 -1 \leq \dim M_0-1$, by Theorem~\ref{MLST}. This contradicts our assumption that $\dim W_1 > \dim M_0$.
\end{proof}

In the standard graded case, the proof of Green's Theorem (Theorem~\ref{thm:green}) via the Linear Syzygy Theorem (cf. \cite[Theorem 8.8.1]{eisenbud}) makes use 
of numerous statements about linear strands that rely on degree arguments.  These break down in the nonstandard graded situation, and Lemmas ~\ref{lem:easy}---\ref{lem:delicate} act to fill that gap.  Thus, with these lemmas in hand, we can now turn to the proof of  Theorem~\ref{thm:virtualNp}.

\begin{proof}[Proof of Theorem \ref{thm:virtualNp}]
Recall that $d_0, \dots, d_n$ are the degrees of the variables $x_0, \dots, x_n$ in $S$, and we assume $d_0 \le d_1 \le \cdots \le d_n$. As in Lemma~\ref{lem:technical}(2), we let $|\d| = \sum_{i = 0}^n d_i$ and $\om_R=\bigoplus_{i \in \Z} H^0(C,\omega_C \otimes L^i)$. We remark, for later use, that $\dim (\om_R)_0=H^0(C,\omega_C) = g$. By Lemma~\ref{lem:technical}(2), we have
$
\Ext^{n-1}_S(R, S(-|\mathbf{d}|)) \cong \om_R.
$
Letting $F$ be the minimal $S$-free resolution of $R$ and $F^\vee = \Hom_S(F, S)$, it follows that 
$F^\vee(- |\mathbf{d}|)[-n+1]$ 
is the minimal free resolution of $\om_R$. In particular, we have 
$
\beta_{i,j}(R) = \beta_{n - 1 - i,  |\mathbf{d}| - j}(\om_R).
$
Now, suppose $\beta_{i, j}(R) = \beta_{n-1-i,  |\mathbf{d}| - j}(\om_R) \ne 0$, and assume $j > w^{i+1}$. We now compute:
$$
|\mathbf{d}| - w_{n-i} = \sum_{j=0}^n d_j - \sum_{j=0}^{n-1-i} d_j = \sum_{j=n-i}^n d_j = w^{i+1} < j.
$$
Rearranging this inequality, we have $|\d| - j  < w_{n-i}$. There are now two cases to consider.
\vskip\baselineskip
\noindent{\emph{Case 1: $g = 0$}.}  In this case, $(\om_R)_1 \ne 0$, and $(\om_R)_i = 0$ for $i < 1$. Every variable $x_i \in S$ is a non-zero-divisor on $\om_R$. In particular, $x_0$ has this property; recall that $\deg(x_0) = 1$. Applying Lemma~\ref{lem:easy}(2), with $\ell = 0$, we arrive at the inequality $|\d| -j\ge w_{n-i}$, a contradiction. We therefore conclude that, if $\beta_{i, j}(R) \ne 0$, then $j \le w^{i+1}$. 
\vskip\baselineskip
\noindent{\emph{Case 2: $g > 0$}.} We now have $(\om_R)_0 \ne 0$, and $(\om_R)_i = 0$ for $i < 0$. Applying Lemma~\ref{lem:delicate} to $\om_R$ implies that $n - 1 - i < \dim (\om_R)_0 = g$, i.e. $i > n - 1 - g = \dim W - g - 2$. 
\end{proof}

Let us illustrate the proofs of both Theorem~\ref{thm:virtualNp} and Lemma~\ref{lem:delicate} via an example:
\begin{example}
\label{ex:lemma}
Suppose we are in the setting of Theorem~\ref{thm:virtualNp}, and assume $g = 2$ and $\PP(W) = \PP(1^6,2^4)$. Let $\om_R$ be as in Lemma~\ref{lem:technical}(2). To prove Theorem~\ref{thm:virtualNp} in this example, we must show that the columns of the Betti table of $\om_R$ are bounded above by the dots in the diagram below:\footnote{We are using here that the $k^{\th}$ row in the Betti table of $R$ must vanish for $k > 6$. One sees this by combining Remark~\ref{rmk:rows} with the fact that the weighted regularity of $R$ is 2, which we prove in Corollary~\ref{cor:reg SI}.}
\[
\footnotesize
\begin{matrix}
          & 0 & 1 & 2 & 3 & 4 & 5 & 6 & 7 & 8\\
       0: & \bullet & \bullet & . & . & . & . & . & . & .\\
       1: & . &.   &   \bullet  & \bullet  & \bullet  & \bullet  &\dagger & .  & .\\
       2: & . & . & . &. & . & . & \bullet  & . &.\\
       3: & . & . & . & . & . & . & . & \bullet  & .\\
       4: & . & . & . & . & . & . & . & . & .\\
      5: & . & . & . & . & . & . & . & . & .\\
       6: & . & . & . & . & . & . & . & . & \bullet
       \end{matrix}
\normalsize
\]
For degree reasons alone, entries in the $0^{\th}$ row must lie in the strongly linear strand of the minimal free resolution of $\om_R$, and the length of that strand is $\leq g-1=1$ by \cite[Corollary 1.4]{linear}.  So the first entry that could potentially pose an issue is the one in the position marked by a $\dagger$, as we cannot conclude, for purely degree reasons, that such an entry lies in the strongly linear strand. Let us use the argument in the proof of Lemma~\ref{lem:delicate} to show this entry must be 0.

We adopt the notation of the proof of Lemma~\ref{lem:delicate}. Say we have a cycle $\sigma \in \bigwedge^6 W \otimes \om_R$ corresponding to a nonzero syzygy in position $\dagger$.  For degree reasons, we have $\sigma_i = 0$ for $i \ne 0, 1$; and $\nu(\sigma_1)=0$. In particular, we have $\sigma_1 = x_0x_1\cdots x_5 \otimes y$ for some $y \in (\om_R)_1$. It follows that, for every $f\in W_2^*$, we have $f\sigma_1=0$. The Claim in the proof of Lemma~\ref{lem:delicate} implies that $\nu(\sigma)\ne 0$, and thus $\sigma_0$ must be nonzero and satisfy $\nu(\sigma_0)>0$.  In particular, every term of $\sigma_0$ must involve at least one variable from $W_{>1}$.  We can thus choose an element $f\in W_{>1}^*$ such that $f\sigma_0 \ne 0$.
We therefore have $f\sigma = f\sigma_0 +  f\sigma_1 = f\sigma_0 \ne 0$, which means $f\sigma$ corresponds to a summand of the strongly linear strand that lies in the position of the entry marked $\star$ below:
\[
\footnotesize
\begin{matrix}
          & 0 & 1 & 2 & 3 & 4 & 5 & 6 & 7 & 8\\
       0: & \bullet & \bullet & . & . & . & \star & . & . & .\\
       1: & . &.   &   \bullet  & \bullet  & \bullet  & \bullet  &\dagger & .  & .\\
       2: & . & . & . &. & . & . & \bullet  & . &.\\
       3: & . & . & . & . & . & . & . & \bullet  & .\\
       4: & . & . & . & . & . & . & . & . & .\\
      5: & . & . & . & . & . & . & . & . & .\\
       6: & . & . & . & . & . & . & . & . & \bullet
       \end{matrix}
\normalsize
\]
This is impossible, because the strongly linear strand has length at most $g-1=1$.
\end{example}

\section{Normal generation and the weighted $N_p$ results}\label{sec:proof of Np results}

We will use the notation/assumptions in Setup~\ref{setup:standard} throughout this entire section. Recall that $\phi_W\colon C \to \PP(\d)$ is a closed embedding, by Proposition~\ref{cor:v ample immersion}(2). As above, we denote by $R$ the section ring $\bigoplus_{i \in \Z} H^0(C, L^i)$, and we write $H^0(C,L^a)_{bD}$ for the space of sections of $L^a$ that vanish along the divisor $bD$. We begin with several technical results.

\begin{lemma}
\label{lem:multiple}
We have $(S/I_C)_{\ell d} \cong R_{\ell d}$ for all $\ell \ge 0$. 
\end{lemma}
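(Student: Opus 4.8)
The plan is to reduce the statement to the projective normality of the single line bundle $L^d$. By Proposition~\ref{stackyprop}, the ring homomorphism $f\colon S \to R$ sending $x_i \mapsto s_i$ has kernel $I_C$, so $S/I_C$ sits inside $R$ as a graded subring and $(S/I_C)_{\ell d}$ is identified with the image $f(S_{\ell d}) \subseteq R_{\ell d} = H^0(C,L^{\ell d})$. The lemma is thus equivalent to the surjectivity $f(S_{\ell d}) = H^0(C,L^{\ell d})$ for every $\ell \geq 0$, which is what I would establish. The case $\ell = 0$ is immediate, since both sides are $k$.

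First I would treat the degree-$d$ piece, which is where the log complete structure does its work. Every degree-$d$ monomial of $S = \Sym(W_1 \oplus W_d)$ is either a product of $d$ variables from $W_1$ or a single variable from $W_d$; applying $f$ therefore gives $f(S_d) = \Sym_d(W_1) + W_d$. By the defining property $H^0(C,L^d) = W_d \oplus \Sym_d(W_1)$ of a log complete series (Definition~\ref{defn:log complete}), this is exactly $R_d$. So $f(S_d) = R_d$; this is precisely the point at which the extra generators $W_d$ are needed, since $\Sym_d(W_1)$ alone would not surject onto $H^0(C,L^d)$.

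Next I would bootstrap to all multiples of $d$ using that $L^d$ is normally generated. Since $D$ is effective, $\deg L = \deg\!\big(L\otimes\OO(-D)\big) + \deg D \geq 2g+1$, and as $d \geq 2$ we get $\deg L^d = d\deg L \geq 2g+1$; hence $L^d$ is very ample and its complete linear series is projectively normal by Castelnuovo's theorem \cite{castelnuovo, mumford-quadrics}. Projective normality says the multiplication map $\Sym^\ell H^0(C,L^d) \to H^0(C,L^{\ell d})$ is surjective for all $\ell$, i.e. $R_{\ell d} = (R_d)^\ell$ as subspaces of $R$. Combining this with $f(S_d) = R_d$ and the fact that $f$ is a ring homomorphism,
\[
R_{\ell d} = (R_d)^\ell = \big(f(S_d)\big)^\ell = f\big(S_d^{\,\ell}\big) \subseteq f(S_{\ell d}) \subseteq R_{\ell d},
\]
which forces equality.

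I do not expect a serious obstacle here: the only substantive inputs are the definition of a log complete series (for the degree-$d$ step) and the classical projective normality of $L^d$ (for the bootstrap). The one point to verify carefully is the computation $f(S_d)=R_d$, since this is the sole place where the direct-sum decomposition defining $W$ is used. If one prefers to avoid appealing to full projective normality, the same bootstrap can instead be run as an induction on $\ell$ using surjectivity of $H^0(C,L^d)\otimes H^0(C,L^{(\ell-1)d}) \to H^0(C,L^{\ell d})$, which holds by the classical multiplication lemma on curves \cite{GL85, mumford-quadrics}.
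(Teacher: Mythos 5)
Your proof is correct and takes essentially the same route as the paper's: both reduce via Proposition~\ref{stackyprop} to showing the ring map $S \to \bigoplus_i H^0(C,L^i)$ is surjective in degrees $\ell d$, obtain the degree-$d$ case from the log complete series structure (the paper cites Proposition~\ref{cor:v ample immersion}(1), which you re-derive directly), and bootstrap to all $\ell$ via normal generation of the complete series $|L^d|$, which holds by the degree hypothesis. The only cosmetic difference is that the paper lifts a polynomial $p \in \Sym^\ell H^0(C,L^d)$ explicitly, whereas you phrase the same step as the chain $(R_d)^\ell = f(S_d^{\,\ell}) \subseteq f(S_{\ell d})$.
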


\begin{proof}
By Proposition~\ref{stackyprop}, we need only show that the ring map $\alpha \colon S \to \bigoplus_i H^0(C, L^{i})$ given by $x_i \mapsto s_i$ induces surjections $\alpha_{\ell d} \colon S_{\ell d} \onto H^0(C, L^{\ell d})$ for all $\ell \ge 0$. By Proposition~\ref{cor:v ample immersion}(1), $\alpha_{d}$ is surjective. Let $V = H^0(C, L^d) $, $f_0, \dots, f_r$ a basis of $V$, and $F_0, \dots, F_r \in S_d$ elements such that $\alpha_d(F_i) = f_i$. Let $\ell \ge 0$. By our assumption on $\deg(L \otimes \OO(-D))$, the embedding $C \into \PP(V)$ determined by $|L^d|$ is normally generated, and so the induced map $h \colon \Sym^\ell(V) \to H^0(C, L^{\ell d})$ is surjective. Let $s \in H^0(C, L^{\ell d})$, and choose $p \in \Sym^\ell(V)$ such that $h(p) = s$, i.e. $p(f_0, \dots, f_r) = s$. We have $\alpha_{\ell d}(p(F_0, \dots, F_r) )= s$. 
\end{proof}

\begin{lemma}\label{lem:multiplicity}
Let $e \geq 0$, and write $e = qd+e’$ for $0 \le e' < d$. 
\begin{enumerate}
\item The natural map
$
H^0(C,L^{qd}) \otimes H^0(C,L^{e'} \otimes \OO(-e'D)) \to H^0(C,L^e \otimes \OO(- e'D))
$
is surjective.

\item The image of the injection $(S/I_C)_e \into H^0(C, L^e)$ is given by the sections that vanish with multiplicity $\geq e’$ along $D$.
\end{enumerate}  
\end{lemma}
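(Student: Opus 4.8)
The plan is to prove part (1) first---this is the crux---and then deduce part (2) by combining it with Lemma~\ref{lem:multiple}. Write $M = L \otimes \OO(-D)$, so that $\deg M \geq 2g+1$ by the standing hypothesis of Setup~\ref{setup:standard}. Using Lemma~\ref{lem:H0iso}, the three spaces in part (1) may be rewritten as $H^0(C, L^{qd})$, $H^0(C, M^{e'})$, and $H^0(C, L^{qd} \otimes M^{e'})$, since $L^{e'}\otimes\OO(-e'D) \cong M^{e'}$ and $L^e \otimes \OO(-e'D) \cong L^{qd}\otimes M^{e'}$. When $q = 0$ or $e' = 0$ one of the two factors is $H^0(\OO_C)$ and the map is trivially surjective, so I may assume $q, e' \geq 1$.

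For the main case I would apply the base-point-free pencil trick. Since $\deg M^{e'} = e'\deg M \geq 2g+1$, the bundle $M^{e'}$ is very ample and carries a base-point-free pencil $\langle s_0, s_1\rangle$, yielding a short exact sequence $0 \to M^{-e'}\to \OO_C^{\oplus 2} \to M^{e'}\to 0$. Tensoring with $L^{qd}$ and passing to cohomology, surjectivity of $H^0(L^{qd})^{\oplus 2}\to H^0(L^{qd}\otimes M^{e'})$---which factors through, and hence forces surjectivity of, the multiplication map in part (1)---follows once $H^1(C, L^{qd}\otimes M^{-e'}) = 0$. The key degree computation is
$$\deg\bigl(L^{qd}\otimes M^{-e'}\bigr) = (qd - e')\deg M + qd\deg D \geq \deg M \geq 2g+1 > 2g-2,$$
using $qd - e' \geq (q-1)d + 1 \geq 1$ for $q \geq 1$; this forces the $H^1$ to vanish. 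I expect this step---choosing the \emph{right} factor, namely $M^{e'}$ rather than $L^{qd}$, to carry the pencil---to be the main obstacle, since reversing the roles produces a line bundle of very negative degree whose $H^1$ does not vanish.

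For part (2), recall that $(S/I_C)_e = g(S_e) \subseteq H^0(C,L^e)$, where $g\colon S \to R$ is the canonical ring map with $\ker g = I_C$ (Proposition~\ref{stackyprop}). A monomial of degree $e$ uses $a$ variables of degree $1$ and $b$ variables of degree $d$ with $a + bd = e$; since $a \geq 0$ forces $b \leq q$, we get $a = (q-b)d + e' \geq e'$, and as each degree-$1$ variable maps into $W_1 = H^0(C,L)_D$, the image of the monomial vanishes to order $\geq e'$ along $D$. This gives the inclusion $g(S_e) \subseteq H^0(C,L^e)_{e'D}$. For the reverse inclusion I would factor through degrees $qd$ and $e'$: Lemma~\ref{lem:multiple} gives $g(S_{qd}) = H^0(C, L^{qd})$, while $S_{e'} = \Sym^{e'}(W_1)$ (as $e' < d$) together with normal generation of $M$ (valid since $\deg M \geq 2g+1$) and Lemma~\ref{lem:H0iso} gives $g(S_{e'}) = H^0(C,L^{e'})_{e'D} \cong H^0(C,M^{e'})$. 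Since $g$ is a ring homomorphism, $g(S_e)$ contains the image of the multiplication map $H^0(L^{qd})\otimes H^0(M^{e'}) \to H^0(L^e)$, which is all of $H^0(C,L^e)_{e'D}$ by part (1). Combining the two inclusions identifies $(S/I_C)_e$ with the sections vanishing to order $\geq e'$ along $D$, as claimed.
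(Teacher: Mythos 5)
Your proof is correct, and your part (2) is essentially the paper's own argument: the same monomial count gives the inclusion of $(S/I_C)_e$ into $H^0(C,L^e)_{e'D}$, and the same factorization through degrees $qd$ and $e'$ (Lemma~\ref{lem:multiple}, normal generation of $L\otimes\OO(-D)$ in degree $\geq 2g+1$, and part (1)) gives the reverse inclusion.

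Part (1) is where you genuinely diverge, and your route works. The paper runs the kernel-bundle argument with the \emph{full} space of sections: it defines a vector bundle (called $M$ there, not to be confused with your $M = L\otimes\OO(-D)$) as the kernel of the evaluation map $H^0(C, L^{e'}\otimes\OO(-e'D))\otimes\OO_C \to L^{e'}\otimes\OO(-e'D)$, twists by $L^{qd}$, and reduces to an $H^1$-vanishing for this kernel bundle; since the kernel has higher rank, the paper must then invoke the filtration lemma of Gruson--Lazarsfeld--Peskine \cite{GLP}, which filters it by line bundles of degree between $-1$ and $-g-1$, and checks $H^1$ vanishes on each graded piece (each has degree $\geq 3g+1$). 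You instead take only a base-point-free \emph{pencil} inside $H^0(C, M^{e'})$, so the kernel is the line bundle $M^{-e'}$ and no filtration is needed: the single vanishing $H^1(C, L^{qd}\otimes M^{-e'})=0$ suffices, and your degree computation $(qd-e')\deg M + qd\deg D \geq \deg M \geq 2g+1 > 2g-2$ is correct. What your approach buys is a shorter, self-contained argument that replaces the GLP filtration by the pencil trick; what it costs is the existence of a base-point-free pencil, which holds for a generic $2$-plane in $H^0(M^{e'})$ and so, strictly speaking, requires $k$ infinite --- but since surjectivity of a map of finite-dimensional $k$-vector spaces can be checked after a faithfully flat extension, you may harmlessly pass to $\overline{k}$ first (a caveat worth one sentence in a final write-up). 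Your closing observation about which factor must carry the pencil is also on target: the paper makes the same choice, placing the evaluation bundle on the $L^{e'}\otimes\OO(-e'D)$ factor, for exactly the reason you give.
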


\begin{proof}


Part (1) is immediate from \cite[Corollary 4.e.4]{green2}. As for (2): let $\iota$ denote the injection $(S/I_C)_e \into H^0(C, L^e)$.
Because $C$ is embedded by a log complete series of type $(D,d)$, the variables of $S = k[x_0, \dots, x_n]$ have degrees $1$ and $d$. Say $x_0, \dots, x_r $ are the variables of degree 1. Every element of $S_e$, and hence $(S/I_C)_{e}$, lies in $(x_0, \dots,x_r)^{e'}$. It follows that 
every section in the image of $g$ vanishes with multiplicity $\geq e’$ along $D$; that is, 
$\im(\iota) \subseteq H^0(C,L^e)_{e'D}$. 
By Lemma~\ref{lem:H0iso}, there is a natural isomorphism $H^0(C,L^e)_{e'D} \cong H^0(C,L^e \otimes \OO(-e'D))$.  Since $\deg(L \otimes \OO(-D))\geq 2g+1$, the complete linear series on $L \otimes \OO(-D)$ induces a normally generated embedding into projective space, i.e. the natural map $H^0(C,L \otimes \OO(-D))^{\otimes a} \to H^0(C,L^a \otimes \OO(-aD))$ is surjective for all $a\geq 0$. 

We first consider the case where $e < d$, so that $e=e'$ and $q=0$.   We have isomorphisms $(S/I_C)_1= H^0(C,L)_D \cong H^0(C,L \otimes \OO(-D))$ and $H^0(C,L^e \otimes \OO(-eD))\cong H^0(C,L^e)_{eD}$.  Combining these identifications with the surjection $H^0(C,L \otimes \OO(-D))^{\otimes e} \onto H^0(C,L^e \otimes \OO(-eD))$ yields a surjection $\pi \colon (S/I_C)_1^{\otimes e}\onto H^0(C,L^e)_{eD}$. We have a commutative diagram
$$
\xymatrix{
(S/I_C)_1^{\otimes e} \ar[rrd]^-{\pi} \ar[r] & (S/I_C)_e \ar[r]^-{\iota} & H^0(C, L^e) \\
&& H^0(C, L^e)_{eD}, \ar[u]
}
$$
where the vertical map is the inclusion, and the left-most horizontal map is given by multiplication. This proves (2) when $e< d$. Finally, suppose $e\geq d$.  By Lemma~\ref{lem:multiple}, we have $(S/I_C)_{\ell d} \cong R_{\ell d} = H^0(C,L^{\ell d})$ for all $\ell \geq 0$, and we have shown above that $(S/I_C)_{e'} \cong H^0(C,L^{e'} \otimes \OO(-e'D))$.  Part (1) yields a surjection
\[
H^0(C,L^{qd}) \otimes H^0(C,L^{e'} \otimes \OO(-e'D)) \onto H^0(C,L^e \otimes \OO(-e'D))\cong H^0(C,L^e)_{e'D}.
\]
Combining these observations, we see that there is a surjection $\pi : (S/I_C)_{qd}\otimes (S/I_C)_{e'} \onto H^0(C,L^e)_{e'D}$ such that the diagram
$$
\xymatrix{
(S/I_C)_{qd}\otimes (S/I_C)_{e'} \ar[rrd]^-{\pi} \ar[r] & (S/I_C)_e \ar[r]^-{\iota} & H^0(C, L^e) \\
&& H^0(C, L^e)_{e'D} \ar[u]
}
$$
commutes, where the vertical map is the inclusion, and the left-most horizontal map is multiplication. The result follows.
\end{proof}

\begin{prop}\label{prop:conductor}
Let $Q$ denote the cokernel of the injection $S/I_C \into R$.
\begin{enumerate}
\item  We have $Q_{qd}=0$ for all $q \ge 0$.  In particular, if $0\leq j <d$, then any element of $S_{d-j}$ annihilates any element of $Q_{qd+j}$.
\item  For all $e\geq 0$, we have $\dim Q_e = \dim Q_{e+d}$.
\item  The support of the sheaf $\widetilde{Q}$ is the set of points in $D$.  In particular, $\widetilde{Q}$ is a $0$-dimensional sheaf on $\PP(W)$, and $Q$ is a $1$-dimensional $S$-module.
\item $H^j_{\mathfrak m} Q = 0$ for $j\ne 1$, and $(H^1_{\mathfrak m} Q)_e= 0$ for $e\geq 0$.  In particular, $Q$ is a Cohen-Macaulay $S$-module, and its weighted regularity (Definition~\ref{defn:benson}) is at most 0.
\end{enumerate}
\end{prop}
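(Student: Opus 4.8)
The plan is to reduce everything to the degreewise description of $Q$ furnished by Lemma~\ref{lem:multiplicity}. Writing $e = qd + e'$ with $0 \le e' < d$, that lemma together with Lemma~\ref{lem:H0iso} identifies the inclusion $(S/I_C)_e \into R_e$ with $H^0(C, L^e \otimes \OO(-e'D)) \into H^0(C,L^e)$, so from the short exact sequence $0 \to L^e \otimes \OO(-e'D) \to L^e \to \OO_{e'D} \to 0$ we obtain
\[
Q_e \cong \im\big(H^0(C,L^e) \to H^0(C,\OO_{e'D})\big).
\]
Part (1) is then immediate: when $e' = 0$ we have $\OO_{e'D} = 0$, so $Q_{qd} = 0$ (this also follows directly from Lemma~\ref{lem:multiple}), and the annihilation statement is pure bookkeeping, since for $u \in S_{d-j}$ and $v \in Q_{qd+j}$ we have $uv \in Q_{(q+1)d} = 0$.

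The key input for parts (2) and (3) is the vanishing $H^1(C, L^e \otimes \OO(-e'D)) = 0$ whenever $e' \ge 1$. Indeed $\deg(L^e \otimes \OO(-e'D)) = qd\deg L + e'\deg(L \otimes \OO(-D)) \ge qd\deg L + e'(2g+1)$, and since $\deg L \ge 2g+1$ (as $D$ is effective) and $d \ge 2$, this exceeds $2g-2$; hence the map $H^0(C,L^e) \to H^0(C,\OO_{e'D})$ is surjective. Consequently $Q_e \cong H^0(C,\OO_{e'D})$ has dimension $e'\deg D$ (the length of $\OO_{e'D}$) when $e' \ge 1$, and $Q_e = 0$ when $e' = 0$. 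As $e$ and $e+d$ share the same $e'$, this gives part (2). Since $\dim_k Q_e$ is bounded, $\widetilde Q$ has constant Hilbert polynomial and is therefore $0$-dimensional, so $Q$ is a $1$-dimensional module; identifying the support with $\Supp D$ then amounts to checking that $S/I_C \into R$ sheafifies to an isomorphism over $C \setminus D$ (where the vanishing condition along $e'D$ is vacuous), while $\widetilde Q$ is nonzero at each point of $D$ because $Q_e \cong H^0(\OO_{e'D})$ has a nonzero local factor there. This yields part (3).

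For part (4), the organizing observation is that multiplication on $Q$ in the degree-$d$ direction is governed by $\OO_{e'D}$: if $\sigma \in H^0(C,L^d)$ does not vanish anywhere on $\Supp D$, then under $Q_e \cong H^0(\OO_{e'D})$ multiplication by $\sigma$ is multiplication by the unit $\sigma|_{e'D}$ of the Artinian ring $H^0(\OO_{e'D})$, hence an isomorphism. Such a $\sigma$ exists because $L^d$ is very ample (its degree is at least $4g+2$), so it separates the finitely many points of $\Supp D$; lifting $\sigma$ through the surjection $S_d \onto H^0(C,L^d)$ of Proposition~\ref{cor:v ample immersion}(1) produces an element $F \in S_d$ that is a non-zero-divisor on $Q$. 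This gives $H^0_\m Q = 0$, and combined with part (3) shows $Q$ is Cohen-Macaulay of dimension $1$, so $H^j_\m Q = 0$ for $j \ne 1$. To control $(H^1_\m Q)_e$ I would feed $F$ into the exact sequence $0 \to Q(-d) \xrightarrow{F} Q \to Q/FQ \to 0$; the analysis above shows $Q/FQ$ is concentrated in degrees $1, \dots, d-1$, so the local cohomology sequence yields isomorphisms $(H^1_\m Q)_e \cong (H^1_\m Q)_{e+d}$ for all $e \ge 0$. Since $H^1_\m Q$ vanishes in all sufficiently high degrees, it must vanish in every degree $\ge 0$, which is the asserted vanishing and forces weighted regularity $\le 0$.

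I expect the main obstacle to be this last point in part (4): producing the degree-$d$ non-zero-divisor and then using the resulting periodicity of the $F$-action to push $H^1_\m Q$ entirely into negative degrees. The vanishing $(H^1_\m Q)_e = 0$ for $e \ge 0$ is precisely what will underlie normal generation (Theorem~\ref{thm:proj norm}), so getting the degree bookkeeping in $0 \to Q(-d) \to Q \to Q/FQ \to 0$ exactly right — in particular the claim that $Q/FQ$ lives only in degrees $< d$ — is the crux of the argument.
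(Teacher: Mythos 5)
Your proposal is correct, and its overall strategy coincides with the paper's: describe $Q$ degreewise via Lemma~\ref{lem:multiplicity}(2), produce a degree-$d$ element of $S$ acting invertibly on $Q$ in nonnegative degrees, and exploit the resulting $d$-periodicity together with the vanishing of $(H^1_{\mathfrak m}Q)_e$ for $e \gg 0$. Two points of comparison are worth recording. First, for the periodicity of $H^1_{\mathfrak m}Q$ you use the long exact sequence of $0 \to Q(-d) \xrightarrow{F} Q \to Q/FQ \to 0$, noting that $Q/FQ$ has finite length concentrated in degrees $1,\dots,d-1$; the paper instead invokes the saturation sequence $0 \to Q_e \to H^0\bigl(\PP(W),\widetilde{Q(e)}\bigr) \to (H^1_{\mathfrak m}Q)_e \to 0$ together with the sheaf isomorphisms $\widetilde{Q(e)} \cong \widetilde{Q(e+d)}$. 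Your route is the more elementary of the two and is perfectly valid (similarly, your identification $Q_e \cong H^0(\OO_{e'D})$ replaces the paper's Riemann--Roch computation in part (2), to the same effect). Second, the one soft spot: your justification for the existence of $\sigma$ --- that $L^d$ is very ample and hence ``separates the finitely many points of $\Supp D$'' --- amounts to avoiding finitely many hyperplanes in $H^0(C,L^d)$, which is only guaranteed to be possible when $k$ is infinite; the paper makes no such assumption on $k$. The robust argument, which is the paper's and is already within your reach, is to note that $H^1(C, L^d\otimes\OO(-D))=0$ (since $\deg(L^d\otimes\OO(-D)) \ge 2g+1$), so the restriction map $H^0(C,L^d) \to H^0(D,\OO_D)$ is surjective, and one may choose $\sigma$ mapping to the unit $1 \in H^0(D,\OO_D)$; this is exactly the kind of $H^1$-vanishing you used in part (2). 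With that substitution, your proof goes through over any field.
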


Before beginning the proof, we discuss a simple example. 

\begin{example}
Consider Example~\ref{ex:P1 deg2}, where $S/I_C \cong k[s^2,st,st^3,t^4]$ and $R \cong k[s^2,st,t^2]$, so that $Q=t^2 \cdot k [t^4]$.  In other words, letting $M = S / (x_0, x_1, x_2)$, we have $Q \cong M(-1)$. Observe that $Q$ is concentrated in positive odd degrees, and each of its nonzero homogeneous components is a 1-dimensional $k$-vector space. Its support is the point $V(x_0, x_1, x_2)$ in $\PP(W)$, which is the point in $D$. Clearly $H^0_{\mathfrak m} Q=0$, because $x_3$ is a non-zero-divisor on $Q$. A local duality argument implies that $H^1_{\mathfrak m} Q= t^{-2}\cdot k[t^{-4}]$, which is zero in nonnegative degrees.
\end{example}

\begin{proof}[Proof of Proposition~\ref{prop:conductor}]
Part (1) is clear from Lemma~\ref{lem:multiple}, and Part (3) is immediate from Lemma~\ref{lem:multiplicity}(2). For (2): we write $e=qd+e'$ with $0\leq e'<d$ and $q\geq 0$. When $e = 0$, this is clear from (1), so assume $e > 0$. We have
\[
\dim Q_e= \dim H^0(C,L^e) - \dim H^0(C,L^e \otimes \OO(-e'D)) =e'\cdot \deg D,
\]
where the first equality follows from Lemmas~\ref{lem:H0iso} and \ref{lem:multiplicity}(2), and the second follows from Riemann-Roch. In particular, we see that $\dim Q_e$ only depends on the remainder of $e$ modulo $d$; this proves (2).
Finally, we consider (4).  The inclusion $D\subseteq C$ yields a short exact sequence
$$
0\to \cO_C(-D)\to \cO_C\to \cO_D\to 0.
$$ 
Twisting by $L^d$, and noting that $\cO_D \otimes L^d =\cO_D$ because $D$ is 0-dimensional, we obtain a short exact sequence
$$
0\to L^d \otimes \cO_{C}(-D) \to L^d \to \cO_D\to 0.
$$  
Noting that $H^1(C,L^d \otimes \OO(-D))=0$ since $\deg(L^d \otimes \OO(-D)) \ge \deg(L \otimes \OO(-D)) \ge 2g+1$, this short exact sequence induces a surjection 
\begin{equation}
\label{eqn:surj}
(S/I_C)_d \cong H^0(C,L^d) \onto H^0(D,\cO_D).
\end{equation}
Since $D$ is a finite collection of points, it is an affine scheme, and so $H^0(D,\cO_D)$ contains a unit.  Choose a degree $d$ element $u\in S_d$ such that the surjection
$$
S_d \onto (S/I_C)_d \xra{\eqref{eqn:surj}} H^0(D,\cO_D)
$$
sends $u$ to a unit. This implies that the map $Q\to Q(d)$ given by multiplication by $u$ does not alter the multiplicity of vanishing along $D$ and thus induces an isomorphism $Q_e\to Q_{e+d}$ for all $e\geq 0$.  In particular, any nonzero element of $Q_e$ with $0<e<d$ cannot be annihilated by the entire maximal ideal $\mathfrak m$, and so $H^0_{\mathfrak m} Q=0$.  Since $\dim Q=1$, we also have $H^i_{\mathfrak m}Q = 0$ for $i>1$. It remains to consider $H^1_{\mathfrak m} Q$. Using that $H^0_{\mathfrak m} Q=0$, \cite[Theorem A4.1]{eisenbudbook} yields a short exact sequence
$$
0 \to Q_e \to H^0(\PP(W),\widetilde{Q(e)}) \to  (H^1_{\mathfrak m} Q)_e \to  0.
$$
We know $\dim Q_e = \dim Q_{e + d}$ for all $e \ge 0$. In fact, since the map $Q(e) \xra{u} Q(e + d)$ is injective and has finite dimensional cokernel for all $e \in \Z$, we have $\widetilde{Q(e)} \cong \widetilde{Q(e+d)}$ for all $e \in \Z$. It follows that $(H^1_{\mathfrak m} Q)_e \cong (H^1_{\mathfrak m} Q)_{e+d}$ for all $e\geq 0$.  However, $(H^1_{\mathfrak m} Q)_e = 0$ for $e\gg 0$, and so we must have $(H^1_{\mathfrak m} Q)_e = 0$ for all $e\geq 0$.
\end{proof}

\begin{proof}[Proof of Theorem~\ref{thm:proj norm}]
From the short exact sequence
$
0\to S/I_C \to R\to Q\to 0,
$ we get a long exact sequence in local cohomology.  Since $H^0_{\mathfrak m} Q = 0$ by Proposition~\ref{prop:conductor}, and $H^1_{\mathfrak m} R=0$ by  Lemma~\ref{lem:technical}(1), we conclude that $H^1_{\mathfrak m} (S/I_C)=0$.
Thus, $S/I_C$ is normally generated, and it follows from Remark~\ref{rem:proj normal}(1) that $S/I_C$ is Cohen-Macaulay.
\end{proof}

\begin{cor}\label{cor:reg SI}
The weighted regularity of $S/I_C$ and $R$ is $2$ if $g>0$ and $1$ if $g=0$.
\end{cor}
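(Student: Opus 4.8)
The plan is to compute the weighted regularity of $R$ directly and then transfer the answer to $S/I_C$ via the conductor sequence $0 \to S/I_C \to R \to Q \to 0$. Recall from Definition~\ref{defn:benson} that the weighted regularity of a finitely generated module $M$ equals $\max_i\{a_i(M)+i\}$, where $a_i(M)=\sup\{e : H^i_{\m}(M)_e \neq 0\}$; since all the modules in play are finitely generated, these local cohomology modules are Artinian and the suprema are attained.

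First I would handle $R$. By Lemma~\ref{lem:technical}(1), $R$ is a Cohen--Macaulay $S$-module of dimension $2$, so $H^i_{\m}(R)=0$ for $i\neq 2$, and hence $\reg(R)=a_2(R)+2$. By the isomorphism \eqref{eqn:isos} we have $H^2_{\m}(R)_e\cong H^1(C,L^e)$, and Serre duality gives $H^1(C,L^e)^*\cong H^0(C,\om_C\otimes L^{-e})$; thus $a_2(R)$ is the largest $e$ for which $H^0(C,\om_C\otimes L^{-e})\neq 0$. Using $\deg L\geq \deg(L\otimes\OO(-D))\geq 2g+1$ together with a Riemann--Roch estimate, one checks that $H^1(C,L^e)=0$ for $e\geq 1$ (as $\deg L^e>2g-2$), while $H^1(C,\OO_C)\cong H^0(C,\om_C)^*$ has dimension $g$. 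Consequently, when $g>0$ the top nonvanishing degree is $e=0$, giving $a_2(R)=0$ and $\reg(R)=2$; when $g=0$ one has $H^1(C,\OO_C)=0$, and the top degree drops to $e=-1$, where $H^1(C,L^{-1})^*\cong H^0(C,\om_C\otimes L)\neq 0$ because $\deg(\om_C\otimes L)=\deg L-2\geq 0$, so $a_2(R)=-1$ and $\reg(R)=1$.

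Next I would transfer this to $S/I_C$. By Theorem~\ref{thm:proj norm}, $S/I_C$ is Cohen--Macaulay of dimension $2$, so again only $H^2_{\m}$ matters and $\reg(S/I_C)=a_2(S/I_C)+2$. Feeding the accumulated vanishing into the long exact sequence in local cohomology for $0\to S/I_C\to R\to Q\to 0$---namely $H^0_{\m}(R)=H^1_{\m}(R)=0$ (Lemma~\ref{lem:technical}(1)), $H^0_{\m}(Q)=H^2_{\m}(Q)=0$ (Proposition~\ref{prop:conductor}(4)), and $H^1_{\m}(S/I_C)=0$ (Theorem~\ref{thm:proj norm})---collapses the sequence to
\[
0\to H^1_{\m}(Q)\to H^2_{\m}(S/I_C)\to H^2_{\m}(R)\to 0.
\]
Since $(H^1_{\m}Q)_e=0$ for all $e\geq 0$ by Proposition~\ref{prop:conductor}(4), the module $H^1_{\m}(Q)$ contributes only in strictly negative degrees, which never exceed $a_2(R)\in\{0,-1\}$. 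Hence $a_2(S/I_C)=a_2(R)$, the two weighted regularities agree, and the stated values follow.

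The step I expect to require the most care is the degree bookkeeping in the $g=0$ case: one must verify that $\deg L\geq 2$ so that $H^1(C,L^{-1})\neq 0$ pins $a_2(R)$ at exactly $-1$ rather than lower (this is automatic when $D\neq 0$, since then $\deg L=\deg(L\otimes\OO(-D))+\deg D\geq 2$), and one must confirm that the negative-degree contribution of $H^1_{\m}(Q)$ genuinely cannot raise $a_2(S/I_C)$ above $a_2(R)$, which is exactly what the vanishing $(H^1_{\m}Q)_{\geq 0}=0$ guarantees.
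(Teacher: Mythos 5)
Your proof is correct and takes essentially the same route as the paper's: both reduce to the short exact sequence $0 \to H^1_{\m}(Q) \to H^2_{\m}(S/I_C) \to H^2_{\m}(R) \to 0$ coming from the conductor sequence, invoke Cohen--Macaulayness of $S/I_C$ and $R$ together with Proposition~\ref{prop:conductor}(4), and identify $(H^2_{\m}R)_e \cong H^1(C,L^e)$ via \eqref{eqn:isos}. Your reorganization---computing $a_2(R)$ first by Serre duality and then transferring it to $S/I_C$---is purely cosmetic, and the $\deg L = 1$, $D = 0$ caveat you flag in the $g=0$ case is a genuine edge case that the paper's own proof glosses over as well.
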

\begin{proof}
By Theorem~\ref{thm:proj norm}, $S/I_C$ is Cohen-Macaulay, and so $H^0_{\mathfrak m} (S/I_C) =H^1_{\mathfrak m} (S/I_C)=0$. Since $R$ is a Cohen-Macaulay $S$-module by Lemma \ref{lem:technical}(1), and $Q$ is a 1-dimensional $S$-module by Proposition~\ref{prop:conductor}(3), the short exact sequence $0\to S/I\to R\to Q\to 0$ yields the short exact sequence 
$
0\to H^1_{\mathfrak m} Q \to H^2_{\mathfrak m} (S/I_C) \to H^2_{\mathfrak m} R \to 0.
$
Proposition~\ref{prop:conductor}(4) implies that $(H^1_{\mathfrak m} Q)_e=0$ for $e\geq 0$, and \eqref{eqn:isos} implies $(H^2_{\mathfrak m} R)_e\cong H^1(C,L^e)$.  We have $H^1(C,L^e) = 0$ if and only if $e>0$ (resp. $e \ge 0)$ when $g>0$ (resp. $g=0$). The statement immediately follows.
\end{proof}

\begin{proof}[Proof of Theorem~\ref{thm:main}]
Normal generation follows from Theorem~\ref{thm:proj norm}. Let us record the following computation:
\begin{align*}
\dim W &= \dim W_1 + \dim W_d \\
&= \dim H^0(C, L \otimes \OO(-D)) + \dim H^0(C, L^d) - \dim H^0(C, L^d \otimes \OO(-dD)) \\
&= \deg(L \otimes \OO(-D)) - g + 1 + d\deg(D)\\
&\ge  g + 2 + q + d\deg(D),
\end{align*}
where the first two equalities follows from the definition of a log complete series along with Lemma~\ref{lem:H0iso}, the third from Riemann-Roch, and the inequality by hypothesis.  Also,
$$
\dim W_1 = \dim H^0(C, L \otimes \OO(-D)) = \deg(L \otimes \OO(-D)) - g + 1 \ge g + 2 + q > g, 
$$
and so the assumption $\dim S_1 > g$ in Theorem~\ref{thm:virtualNp} holds here.

Let $Q$ be as in Proposition~\ref{prop:conductor}.
By Theorem~\ref{thm:proj norm}, Lemma~\ref{lem:technical}(1), and Proposition~\ref{prop:conductor}(4), we have a short exact sequence
$
0\to S/I_C \to R \to Q \to 0
$
of Cohen-Macaulay $S$-modules of dimensions $2,2,$ and $1$, respectively. Recall that $S=k[x_0, \dots, x_n]$ and $|\d| = \sum_{i=0}^n \deg x_i$.   Write $\omega _{R}\coloneqq \Ext_S^{n-1}( R, S(-|\d|))$,  $\omega_{S/I_C}\coloneqq \Ext_S^{n-1}( S/I_C, S(-|\d|))$ and $\omega_Q\coloneqq\Ext_S^{n}(Q,S(-|\d|))$ for the Matlis duals of these modules.  We have a short exact sequence
\[
0\to\omega_R \to \omega_{S/I_C}\to\omega_Q \to 0.
\]
Just as in our proof of Theorem~\ref{thm:virtualNp}, we must consider the $g=0$ and $g > 0$ cases separately:
\vskip\baselineskip
\noindent \emph{Case 1: $g = 0$.} While we argue as in the proof of Theorem~\ref{thm:virtualNp}, we recapitulate the details for completeness. Since $S/I_C$ is Cohen-Macaulay of dimension 2, we have 
$
\beta_{i,j}(S/I_C) = \beta_{n - 1 - i,  |\mathbf{d}| - j}(\om_{S/I_C})
$ for all $i, j$.
Now, suppose $\beta_{i, j}(S/I_C) = \beta_{n-1-i,  |\mathbf{d}| - j}(\om_{S/I_C}) \ne 0$, and assume $j > w^{i+1}$. We have:
$$
|\mathbf{d}| - w_{n-i} = \sum_{j=0}^n d_j - \sum_{j=0}^{n-1-i} d_j = \sum_{j=n-i}^n d_j = w^{i+1} < j.
$$
Rearranging, we get $|\d| - j  < w_{n-i}$. Corollary~\ref{cor:reg SI} (along with Local Duality) implies that $(\om_{S/I_C})_1 \ne 0$ and $(\om_{S/I_C})_{< 1} = 0$. Since $I_C$ is prime, every variable $x_i \in S$ is a non-zero-divisor on $\om_{S/I_C}$. In particular, $x_0$ has this property. Applying Lemma~\ref{lem:easy}(2), with $\ell = 0$, we get $|\d| -j\ge w_{n-i}$, a contradiction. Thus, if $\beta_{i, j}(S/I_C) \ne 0$, then $j \le w^{i+1}$. It follows that the embedding $C \subseteq \PP(W)$ satisfies the weighted $N_p$-condition for all $p \ge 0$.
\vskip\baselineskip
\noindent \emph{Case 2: $g > 0$.} 
 We first prove that
 \begin{equation}
 \label{eqn:tor}
  \Tor_i^S(\omega_R,k)_{j}=  \Tor_i^S(\omega_{S/I_C},k)_{j}
 \end{equation}
 for all $j < w_{i+1}$. Proposition~\ref{prop:conductor}(4) (along with Local Duality) implies that $(\om_{Q})_i = 0$ for $i \le 0$, while Corollary~\ref{cor:reg SI} (along with Local Duality) implies that $(\om_{S/I_C})_0 \ne 0$ and $(\om_{S/I_C})_{< 0} = 0$, and similarly for $\om_R$. Lemma~\ref{lem:strongly linear strands} therefore implies that the strongly linear strands of the minimal free resolutions of $\om_R$ and $\om_{S/I_C}$ are isomorphic. The identification \eqref{eqn:tor} now follows by applying Lemma~\ref{lem:delicate} to both $\om_R$ and $\om_{S/I_C}$.
(Note that $\dim (\om_{R})_0 = \dim (\om_{S/I_C})_0 = g$, and so, since $\dim W_1 > g$, the assumption ``$\dim W_1 > M_0$" in Lemma~\ref{lem:delicate} holds for both $M = \om_R$ and $M = \om_{S/I_C}$.) Finally, as in the proof of Theorem~\ref{thm:virtualNp} (and Case 1), we have $\b_{i,j}(R) = \b_{n-1-i, |\d| - j}(\om_R)$, and similarly for $S/I_C$. The equality \eqref{eqn:tor} implies that $\Tor_{i}(R,k)_{j} =  \Tor_i(S/I_C,k)_{j}$ whenever $|\d| - j < w_{n - i},$ i.e. $j > |\d| - w_{n-i} = w^{i+1}$. Applying Theorem~\ref{thm:virtualNp}, we therefore conclude that, if $i \le \dim W - g - 2$ and $\b_{i,j}(S/I_C) \ne 0$, then $j \le w^{i+1}$. Since  $\dim W - g - 2 \ge q + d\deg(D)$, it follows that the embedding $C \subseteq \PP(W)$ satisfies the weighted $N_{q + d\deg(D)}$ property.
  \end{proof}

\begin{proof}[Proof of Corollary~\ref{cor:quadrics}]
Immediate from Theorem~\ref{thm:main}.
\end{proof}

\section{Questions}\label{sec:questions}
\subsection{Higher dimensional varieties}

Mumford famously showed that any high degree Veronese of a projective variety is  ``cut out by quadrics''~\cite{mumford-quadrics}; see also the generalization in~\cite{sidman-smith}.  Corollary~\ref{cor:quadrics} is an analogue of Mumford's result for curves in weighted projective spaces; it is natural to ask if this result can be extended to other varieties in weighted projective spaces:
\begin{question}\label{q:mumford}
Can one prove results like Corollary~\ref{cor:quadrics} for higher dimensional varieties embedded in weighted projective spaces?
\end{question}

We can also ask about normal generation and the $N_p$-conditions for higher dimensional varieties.  Here, the central results are those of ~\cite{ein-lazarsfeld-np}, which prove $N_p$-results for embeddings by line bundles of the form $K_X+L^d+B$, where $K_X$ is the canonical bundle, $L$ is very ample, and $B$ is effective.

\begin{question}
Can one obtain $N_p$-conditions for higher dimensional varieties embedded by log complete series, under hypotheses similar to those in~\cite{ein-lazarsfeld-np}?  
\end{question}

Embeddings into weighted spaces also provide an intermediate case for investigating asymptotic syzygy-type questions, as in~\cite{ein-lazarsfeld-asymptotic}.

\begin{question}
With notation as in Question~\ref{q:mumford}, can one prove asymptotic nonvanishing results, similar to what happens in the main results of~\cite{ein-lazarsfeld-asymptotic}?   At the other extreme, can one prove asymptotic vanishing results as in \cite{park}? 
\end{question}

\subsection{Scrolls and the Gonality Conjecture}
There is a rather trivial sense in which curves embedded via log complete series of high degree satisfy an analogue of Green-Lazarsfeld's Gonality Conjecture.  Recall that a high degree curve in $\PP^r$ has regularity $2$, and so the Betti table looks as follows:
\[
\bordermatrix{
&0&1&2&\cdots& a&a+1&\cdots &b&b+1&\cdots \cr
0&*&-&-&\cdots&-&-&\cdots&-&-&\cdots \cr
1&-&*&*&\cdots&*&*&\cdots&*&-&\cdots \cr
2&-&-&-&\cdots&-&*&\cdots&*&*&\cdots \cr
}
\]
The $N_p$-conditions are about the moment we first get nonzero entries in row $2$, i.e. column $a+1$ in the picture.  In~\cite{GL}, Green-Lazarsfeld conjectured that the moment where we first get a zero entry in row $1$, i.e. column $b+1$ in the picture, is determined by the gonality $\text{gon}(C)$ of the curve.  This is the Green-Lazarsfeld Gonality Conjecture, and it was proven in~\cite{ein-lazarsfeld-gonality}, utilizing techniques originally developed by Voisin~\cite{voisin-even}.

In the standard graded setting, $b$ is the maximal index such that $F_i$ has a minimal generator of degree $i+1$.  In the weighted setting, a natural analogue of the invariant $b$ would be to let  
$b(C) \coloneqq \max\{ i \text{ : } F_i \text{ has a generator of degree $w_i+1$}\}.$  However, the main result of \cite{ein-lazarsfeld-gonality} immediately implies that, with notation as in Theorem~\ref{thm:main}, we have that $b(C) = \dim W_1 - 2 -\on{gon}(C)$ for $\deg L \gg 0$.
Since this only depends on the degree one part of $W$, it tells us nothing new about the relationship between the geometry of curves and the algebraic properties of syzygies.  
So if we want to find a meaningful  weighted analogue of the Gonality Conjecture, we will need to look in a different direction.

The Green-Lazarsfeld Gonality Conjecture is one of a series of conjectures about the extent to which the syzygies of a curve $C$ are determined by embeddings of $C$ into special varieties such as scrolls or other varieties of minimal degree (or minimal regularity).  To develop a meaningful weighted analogue of the Green-Lazarsfeld Gonality Conjecture, a natural first question to tackle would be:
 
 \begin{question}\label{q:scrolls}
Can we develop a weighted theory of rational normal scrolls, or varieties of minimal degree, or varieties of minimal regularity?  More specifically, can one develop such theories for the weighted spaces $\PP(1^a,d^b)$ that arise in Theorem~\ref{thm:main}?
\end{question}

There is a famous classical connection between varieties of minimal degree and the $N_p$-condition: a variety has minimal degree if and only if it satisfies the $N_p$-condition for the maximal possible $p$, i.e. if and only if its resolution is purely linear.  If one can answer parts of Question~\ref{q:scrolls}, it would be interesting to then investigate how that answer is related to the weighted $N_p$-conditions explored in this paper.

In a different direction, a famous result of Gruson-Lazarsfeld-Peskine \cite{GLP} bounds the regularity of any nondegenerate irreducible curve $C\subseteq \PP^r$ in terms of its degree.   It would be interesting to explore an analogue of such a theorem:

\begin{question}
Let $C$ be a smooth (or irreducible) curve in $\PP(d_0, \dots, d_n)$.  Can one bound the regularity of $I_C$ via a Gruson-Lazarsfeld-Peskine-type formula?
\end{question}

\subsection{$M_L$-bundles}\label{subsec:ML}
Green, Lazarsfeld and others have used positivity of $M_L$-bundles to obtain $N_p$-results for syzygies of a curve $C$ embedded by a line bundle $L$~\cite{AKL,ein-lazarsfeld-np,GL, GLP, KL,pareschi,park}. Let $C\subseteq \PP^n$ be a curve embedded by the complete linear series for  $L$. The vector bundle $M_L$ is defined by the short exact sequence:
$
0\gets L \gets H^0(C,L)\otimes_k \cO_C \gets M_L \gets 0.
$
Vanishing results about exterior powers of $M_L$ can be used to obtain $N_p$-results about syzygies of the embedding $C\into \mathbb P^r$ by the complete linear series $|L|$.  

In the nonstandard graded case, the setup is more subtle, as the linear series involves sections of different degrees.  This would require altering the basic framework, and 
it would be interesting to see whether $N_p$-results for varieties could be proven via weighted analogues of this approach.

\subsection{Stacky curves}
Stacky curves have arisen in recent work on Gromov-Witten theory, mirror symmetry, the study of modular curves and more; see~\cite{stackycurve} and the references within.
In~\cite{stackycurve}, Voigt and Zureick-Brown
 prove analogues of classical results like Petri's Theorem for stacky curves; in fact, their results can be viewed as showing that the canonical embeddings satisfy our weighted $N_1$-condition. Stacky curves cannot generally be embedded into standard projective space; rather, they embed into weighted projective stacks. The only relevant $N_p$-conditions for such curves are therefore in the weighted projective setting.

\begin{question}
Prove an analogue of Theorem~\ref{thm:main} for stacky curves embedded into weighted projective stacks by high degree line bundles.  
\end{question}
We highlight one aspect where stacky curves differ from smooth curves.  For a line bundle of high enough degree on a smooth curve, the rank of the global sections depends only on the degree of the line bundle.  This is not the case for stacky curves, as the space of global sections also depends on the behavior of the corresponding divisor at the stacky points.  So instead of simply fixing the degree of the line bundle, a more natural setup for a stacky curve might be to follow the recipe from~\cite{ein-lazarsfeld-np} and focus on $N_p$-conditions for divisors of the form $K+ L^d + B$, where $K$ is the canonical divisor, $L$ is very ample, and $B$ is effective.

In a slightly different direction, one could focus on canonical embeddings.  Green's Conjecture relates classical $N_p$-conditions to the intrinsic geometry of a canonical curve, specifically to its Clifford index.  The canonical embedding of a stacky curve lands in a weighted projective stack, and thus our weighted $N_p$-conditions provide a natural setting for considering a stacky analogue of Green's Conjecture:
\begin{question}
Can one use weighted $N_p$-conditions to articulate an analogue of Green's Conjecture for stacky curves?
\end{question}

\subsection{Nonstandard Koszul rings}
Koszul rings were defined by Priddy~\cite{priddy} and now play a fundamental role within commutative algebra~\cite{avramov-eisenbud, avramov-peeva, aci, conca}. 
One rich source of Koszul rings comes from high degree Veronese embeddings.  
Let $X\subseteq \PP^r$ be a smooth variety embedded by a complete linear series for $L^d$, where $L$ is very ample and $d\gg 0$; it is known that the homogeneous coordinate ring of $X\subseteq \PP^r$ is a Koszul ring~\cite{backelin,ERT}.

It would be interesting to know if high degree embeddings into weighted spaces (via log complete series) can provide more exotic examples of Koszul rings, or related concepts.  The following example shows some of the subtle behavior that might arise.

\begin{example}
Let $\PP^1 \to \PP(1^3,2^2)$ be the map $[s:t]\mapsto [s^3:s^2t:st^2:st^5:t^6]$ be the map determined by the log complete series for $\cO_{\PP^1}(3)$ with $d=2$ and $\deg(D)=1$.  Let $S=k[x_0,x_1,x_2,y_0,y_1]$ be the Cox ring of $\PP(1^3,2^2)$.  The defining ideal of the image is
$
I = \langle 
x_{1}^{2}-x_{0}x_{2},\,x_{2}y_{0}-x_{1}y_{1},\,x_{
      1}y_{0}-x_{0}y_{1},\,x_{2}^{3}-x_{0}y_{1},\,x_{1}x_{2}^{2}-x_{0}y_{0},\,y
      _{0}^{2}-x_{2}^{2}y_{1}
\rangle.
$
The ring $S/I_C$ is isomorphic to the subalgebra $k[s^3,s^2t,st^2,st^5,t^6]$, and it is a variant of a Veronese subring; for instance, it contains the degree $6$ Veronese subring.  The ring $T = S/I_C$ does not satisfy the standard definition of a graded Koszul ring, as the minimal free resolution of the residue field has the form
$
\left[T \gets T(-1)^3 \oplus T(-2)^2 \from \cdots\right].
$
However, if we consider the grevlex order with $y_0>y_1>x_0>x_1>x_2$, then the initial ideal is
$
\operatorname{in}(I) = \langle x_{1}^{2},\,x_{1}y_{1},\,x_1y_{0},\,x_0y_{1},\,y_{0}x_{0},\,y_{0}^{2}\rangle.
$
Thus, $I$ has a quadratic Gr\"obner basis; if $S$ were standard graded, then this would imply that $S/I_C$ is $G$-quadratic and therefore Koszul~\cite{conca}. Given the nonstandard grading, it implies that $S/I_C$ is a sort of nonstandard graded deformation of a Koszul ring.
\end{example}
\begin{question}
Let $X$ be a smooth variety, and consider an embedding $X\into \PP(W)$ given by a log complete series for $L^e$, where  $L$ is very ample and $e \gg 0$.  Let $I_X\subseteq S$ be the defining ideal in the corresponding nonstandard graded polynomial ring.  Does $I_X$ admit a quadratic Gr\"obner basis?  What sort of Koszul-type properties are satisfied by $S/I_X$?
\end{question}

\subsection{$N_p$-conditions for curves in other toric varieties}\label{subsec:other toric}
Another natural direction is to ask whether smooth curves in other toric varieties also satisfy $N_p$-conditions. To approach this, one must consider:

\begin{question}\label{q:np toric}
Let $S$ be the $\ZZ^r$-graded Cox ring of a simplicial toric variety $X$ and $B$ the corresponding irrelevant ideal.  
\begin{enumerate}
	\item What is a good analogue of complete, or log complete, linear series?
	\item  What is a good analogue of normal generation?
	\item What is the appropriate analogue of the $N_p$-conditions in this setting?
	\item Does the answer to (1) or (2) depend only on the grading of $S$, or does it also depend on the choice of the irrelevant ideal $B$?  
	\item When defining the $N_p$-conditions, should one focus on minimal free resolutions or on virtual resolutions?
\end{enumerate}
\end{question}
Even for a product of projective spaces, some of these questions are open: 
\begin{question}
Can one develop analogues of the main results of this paper for a smooth curve in a product of projective spaces?
\end{question}

\bibliographystyle{amsalpha}
\bibliography{Bibliography}

\end{document}